
\documentclass[notitlepage]{amsart}


\setlength{\textheight}{43pc}
\setlength{\textwidth}{28pc}

\usepackage[all]{xy}
\usepackage{pb-diagram,pb-xy}

\usepackage{amsmath,amscd}
\usepackage{enumerate}
\usepackage{amsmath}
\usepackage{amssymb}
\usepackage{amsfonts}
\usepackage{eufrak}
\usepackage{latexsym}
\usepackage{leftidx}
\usepackage{amsthm}
\usepackage{color}

\newtheorem{theorem}{Theorem}[section]
\newtheorem{proposition}[theorem]{Proposition}
\newtheorem{lemma}[theorem]{Lemma}
\newtheorem{defin}[theorem]{Definition}
\newtheorem{corollary}[theorem]{Corollary}
\newtheorem{remark}[theorem]{Remark}
\newtheorem{example}[theorem]{Example}

\newcommand{\modu}{{\mathrm{mod}}}
\newcommand{\Ima}{{\mathrm{Im}}}
\newcommand{\Ker}{{\mathrm{Ker}}}

\newcommand{\Hom}{{\mathrm{Hom}}}
\newcommand{\rk}{{\mathrm{rk}}}
\newcommand{\Tr}{{\mathrm{Tr}}}
\newcommand{\End}{{\mathrm{End}}}
\newcommand{\Ext}{{\mathrm{Ext}}}
\newcommand{\Tor}{{\mathrm{Tor}}}
\newcommand{\proj}{{\mathrm{proj}}}

\newcommand{\N}{{\mathbb{N}}}

\newcommand{\F}{{\mathcal{F}}}
\newcommand{\Q}{{\mathbf{Q}}}

\newcommand{\C}{{\mathcal{C}}}

\newcommand{\X}{{\mathcal{X}}}
\newcommand{\Y}{{\mathcal{Y}}}
\newcommand{\add}{{\mathrm{add}}}
\newcommand{\rad}{{\mathrm{rad}}}
\newcommand{\Top}{{\mathrm{top}}}

\renewenvironment{proof}{\noindent \bf Proof. \rm}{$\quad \hfill \square$ \bigskip}
\begin{document}

\subjclass{Primary 16G10. Secondary 16E99}

\title{$\mathcal{C}$-filtered modules and proper costratifying systems}

\author{\sc O. Mendoza, M.I. Platzeck and M. Verdecchia}


\maketitle

\begin{abstract}
In this paper we define and study the notion of a proper costratifying
system, which is a generalization of the so-called proper costandard modules to the
context of stratifying systems. The proper costandard modules were
defined by V. Dlab in his study of quasi-hereditary algebras (see \cite{Dlab}).
\end{abstract}

\section*{Introduction.}
Standard modules over artin algebras were defined by C.M. Ringel (see \cite{R}) in connection with the study of quasi-hereditary algebras,
 where the category of modules filtered by them plays an essential role. Let $P(1), ..., P(n)$ be an ordered sequence of the
non-isomorphic indecomposable projective modules over an artin algebra $\Lambda$. By definition, the standard module ${}_\Lambda\Delta(i)$ is
the largest factor module of $P(i)$ with composition factors only amongst $S(1), ..., S(i)$, where $S(j)$ is the simple top of $P(j)$.
Let mod$(\Lambda)$ denote the category of finitely generated left $\Lambda$-modules. Denote by $\mathcal{F}({}_\Lambda\Delta)$ the subcategory
of mod$(\Lambda)$ consisting of the $\Lambda$-modules having a filtration with factors isomorphic to standard modules. The algebra
$\Lambda$ is said to be standardly stratified if all projective $\Lambda$-modules belong to $\mathcal{F}({}_\Lambda\Delta)$. This class
of algebras was originally defined by Cline, Parshall and Scott in \cite{CPS}, and was widely studied by different mathematicians
(see \cite{ADL}, \cite{AHLU}, \cite{Webb}, \cite{ES}, \cite{Platzeck Reiten},  \cite{Xi}).
 \

Erdmann and S\'aenz extended the notion of standard modules and defined the stratifying systems in \cite{ES} with respect to a finite
linear ordered set. They proved that, for a
stratifying system $\Theta$, the category of modules filtered by $\Theta$ is equivalent to the category of modules filtered by the standard
modules over an appropriate standardly stratified algebra. The same was done in \cite{MSXi} for stratifying systems defined on a finite
pre-ordered set.  Important work in this direction was done in the earlier paper \cite{Webb} of Webb, though stratifying systems were not
defined there.
 \

In contrast with the situation for quasi-hereditary algebras, if $\Lambda$ is a standardly stratified algebra then $\Lambda^{op}$ need not also be standardly stratified. However, Dlab defined a new class of modules, the proper standard modules (see \cite{Dlab}), with the property that $\Lambda$ is a standardly stratified algebra, that is, $\Lambda$ is filtered by the standard modules, if and only if $\Lambda^{op}$ is filtered by the proper standard modules. This motivates the study of the category of modules filtered by the proper standard modules (see \cite{AHLU}, \cite{L}).
  \

In this paper we define and study the notion of a proper costratifying
system, which is a generalization of the so-called proper costandard modules to the
context of stratifying systems.
  \

One of our main results states that the category of modules filtered by a proper costratifying system is dual to the category of modules filtered by the proper costandard modules over a certain standardly stratified algebra.
  \

Although stratifying systems and proper costratifying systems are quite different, they have similar features, and they can be studied under
a common frame. This comes from the observation that, in either case, there is a module $M$ in mod$(\Lambda)$ such that the category $\mathcal{F}$ of modules filtered by the corresponding system satisfies the following property:
\begin{quotation}
For each $X$ in $\mathcal{F}$ there is an exact sequence $M_2\rightarrow M_1\rightarrow M_0\rightarrow X\rightarrow0$ with $M_0,M_1,M_2$ in add$(M)$,
which remains exact under the functor $F=\text{Hom}_\Lambda(M,-)$. Here $\add\,(M)$ denotes the full subcategory of $\modu\,(\Lambda)$ consisting of the direct sums of direct summands of $M$,
\end{quotation}
as proved in Lemma \ref{lema MMS sist. estratif theta} and Proposition \ref{dim(Hom(Yi,titai))=1; sucesion exacta M', E y M}.

   \
For a $\Lambda$-module $M$, the category consisting of the $\Lambda$-modules admitting a sequence as above, is denoted by $C^{M}_2$,
and was studied by Platzeck and Pratti in \cite{Platzeck Pratti}. In both cases $M$ is Ext-projective in $\mathcal{F}$. More precisely,
$M$ is the sum of the non-isomorphic Ext-projective indecomposable modules in $\mathcal{F}$. Using this fact, results in \cite{Platzeck Pratti} can be applied to prove properties of the category $\mathcal{F}$.
    \

The paper is organized as follows. After a brief section of preliminaries, we devote section 2 to the study of $C^{M}_2$ categories and show how these results apply to give a new proof of the above mentioned theorem of Erdmann and S\'aenz concerning stratifying systems. In section 3 we introduce the notion of a stratifying system and study their properties. Finally, in section 4 we prove our main results about proper costratifying systems.

\section{Preliminaries}
\

Throughout this paper algebra means \emph{artin $R$-algebra}, where $R$ is a commutative artinian ring. When
$\Lambda$ is an algebra the term \lq $\Lambda$-module\rq   will mean \emph{finitely
generated left $\Lambda$-module}. The category of finitely
generated left $\Lambda$-modules is denoted by $\modu\,(\Lambda)$ and the full
subcategory of finitely generated projective $\Lambda$-modules by
$\proj\,(\Lambda)$. For $\Lambda$-modules $M$ and $N$,
$\Tr_M\,(N)$ is the \emph{trace} of $M$ in $N$, that is, $\Tr_M\,(N)$ is
the $\Lambda$-submodule of $N$ generated by the images of all
morphisms from $M$ to $N$. Let $D:\modu\,(\Lambda)\rightarrow\modu\,(\Lambda^{op})$ denote the \emph{usual duality} for artin algebras, and $*$ denote the functor $\Hom_\Lambda(-,\Lambda):\modu\,(\Lambda)\rightarrow\modu\,(\Lambda^{op})$. Then $*$ induces a duality from $\proj\,(\Lambda)$ to
$\proj\,(\Lambda^{op}).$ For a given natural number $t,$ we set
$[1,t]=\{1,2,\cdots, t\}.$
\

Let $\Lambda$ be an algebra. We next recall the definition (see \cite{R,DR,ADL,Dlab}) of the following classes of $\Lambda$-modules: standard, proper standard, costandard and proper costandard. Let $n$ be the rank of the Grothendieck group $K_0\,(\Lambda)$. We fix a linear order $\leq$ on $[1,n]$ and a representative set ${}_\Lambda P=\{{}_{\Lambda}P(i)\;:\;i\in[1,n]\}$ containing one module of each iso-class of indecomposable projective $\Lambda$-modules. The injective envelope of the simple $\Lambda$-module ${}_{\Lambda}S(i)=\Top\,({}_{\Lambda}P(i))$ is denoted by ${}_{\Lambda}I(i).$ For the opposite algebra $\Lambda^{op},$ we always consider the representative set ${}_{\Lambda^{op}} P=\{{}_{\Lambda^{op}}P(i):i\in[1,n]\}$ of indecomposable projective $\Lambda^{op}$-modules, where ${}_{\Lambda^{op}}P(i)=({}_{\Lambda}P(i))^*$ for all $i\in[1,n]$. So, with these choices in mind, we introduce now the following classes of modules:
\

The set of \emph{standard $\Lambda$-modules} is $\leftidx{_\Lambda}{\Delta}=\{
\leftidx{_\Lambda}{\Delta}(i):i\in[1,n]\},$ where
$\leftidx{_\Lambda}{\Delta}(i)={}_{\Lambda}P(i)/\Tr_{\oplus_{j>i}\,{}_{\Lambda}P(j)}\,({}_{\Lambda}P(i))$. Then,
${}_{\Lambda}{\Delta}(i)$ is the largest factor module of
${}_{\Lambda}P(i)$ with composition factors only amongst
${}_{\Lambda}S(j)$ for $j\leq i.$ The set of \emph{costandard
$\Lambda$-modules} is
${}_{\Lambda}\!\nabla=D({}_{\Lambda^{op}}{\Delta}),$
where the pair $({}_{\Lambda^{op}}P,\leq)$ is used to compute
${}_{\Lambda^{op}}\Delta.$
 \

The set of  \emph{proper standard $\Lambda$-modules} is
${}_{\Lambda}\overline{\Delta}=\{{}_{\Lambda}\overline{\Delta}(i):i\in[1,n]\},$
where
${}_{\Lambda}\overline{\Delta}(i)={}_{\Lambda}P(i)/\Tr_{\oplus_{j\geq
i}\,{}_{\Lambda}P(j)}\,(\rad\,{}_{\Lambda}P(i))$. Then,
${}_{\Lambda}\overline{\Delta}(i)$ is the largest factor module of
${}_{\Lambda}{\Delta}(i)$ satisfying the multiplicity condition
$[{}_{\Lambda}\overline{\Delta}(i):S(i)]=1.$ The set of \emph{proper
costandard $\Lambda$-modules} is
${}_{\Lambda}\!\overline{\nabla}=D({}_{\Lambda^{op}}\overline{\Delta}),$
where the pair $({}_{\Lambda^{op}}P,\leq)$ is used to compute
${}_{\Lambda^{op}}\overline{\Delta}.$
 \

Let $\F({}_{\Lambda}{\Delta})$ be the subcategory of $\modu\,(\Lambda)$ consisting of the
$\Lambda$-modules having a ${}_{\Lambda}{\Delta}$-filtration, that is,
a filtration  $0=M_0\subseteq M_1\subseteq\cdots\subseteq M_s=M$ with factors $M_{i+1}/M_i$
isomorphic to a module in ${}_{\Lambda}{\Delta}$ for all $i$. The algebra $\Lambda$ is
a \emph{standardly stratified algebra} with respect to the linear
order $\leq$ on the set $[1,n]$, if
$\proj\,(\Lambda)\subseteq\F({}_{\Lambda}{\Delta})$ (see \cite{ADL,Dlab,CPS}).
The algebra $\Lambda$ is
a \emph{properly stratified algebra} with respect to the linear
order $\leq$ on the set $[1,n]$, if and only if its regular representation is filtered by standard as well as by proper standard modules. That is,
$\proj\,(\Lambda)\subseteq\F({}_{\Lambda}{\Delta})\cap\F({}_{\Lambda}{\overline{\Delta}})$ (see \cite{Dlab2}).
 \

Recall that a morphism $f:C\rightarrow M$ in $\modu\,(\Lambda)$ is
\emph{right minimal} if any morphism $g:C\rightarrow C,$ with
$f=fg,$ is an automorphism. Moreover, for a given class $\C$ of objects in
$\modu\,(\Lambda)$, $f:C\rightarrow M$ is a
\emph{right $\mathcal{C}$-approximation} of $M$ if $C\in\mathcal{C}$
and the map
$\Hom_\Lambda(C_1,f):\Hom_\Lambda(C_1,C)\rightarrow\Hom_\Lambda(C_1,M)$ is surjective for all $C_1\in\C$. A \emph{right minimal
$\mathcal{C}$-approximation} is a right $\mathcal{C}$-approximation
which is right minimal. The notion of
\emph{left minimal} morphism and \emph{left minimal
$\mathcal{C}$-approximation} are defined dually.
\

Let $\Lambda$ be an algebra and $\X$ a class of objects in $\modu\,(\Lambda).$ For
each natural number $n,$ we set ${}^{\perp_n}\X=\{M\in\modu\,(\Lambda): \Ext_\Lambda^n(M,-)|_{\X}=0\}$
and ${}^{\perp}\X=\cap_{n>0}\,{}^{\perp_n}\X.$ Similarly, the notions of $\X^{\perp_n}$
and $\X^{\perp}$ are introduced.

\section{$C^{M}_2$ categories and $\mathcal{C}$-filtered modules.}
The categories $C^{M}_n$, whose definition is recalled in the next paragraph, were introduced by Platzeck and Pratti in \cite{Platzeck Pratti}, where particular interest was focused on the
 case when $C^{M}_0=C^{M}_1$. Here, we will apply these ideas in a different context. We will mainly concentrate in the case when
$\mathcal{C}\subseteq C^{M}_2$, and study properties of the category $\mathcal{F}(\mathcal{C})$ of modules filtered by $\mathcal{C}$.
These results apply to the category of modules filtered by a stratifying system (Theorem \ref{teo. sist. estratif. Ext-proyectivos MMS}),
as well as to those filtered by a proper costratifying system (Theorem \ref{Teo. equivalencia}) , since both categories are contained
 in $C^{Q}_2$, for an appropriate $Q$. Other examples of categories contained in $C^{M}_2$ are the torsion modules of a tilting module $M$.
This gives a new approach to prove well-known results in tilting theory (\cite{Platzeck Pratti}, \cite{Platzeck Pratti2}).
\

Let $\Lambda$ be an artin $R$-algebra. For each
$M\in\modu\,(\Lambda)$, we consider the opposite algebra
$\Gamma=\End_\Lambda({M})^{op}$ and the $R$-functors
 $$\modu\,(\Lambda)\overset{F}{\longrightarrow}\modu\,(\Gamma)\overset{G}{\longrightarrow}\modu\,(\Lambda),$$
where $F=\Hom_\Lambda({}_{\Lambda}M_\Gamma,-)$ and $G={}_\Lambda
M_\Gamma\otimes_\Gamma-$. Following M.I. Platzeck and N.I. Pratti in Section 2 of
\cite{Platzeck Pratti}, for $n\geq 0$ we denote by $C^{M}_n$ the
full subcategory of $\modu\,(\Lambda)$ consisting of the
$\Lambda$-modules $X$ admitting  an exact sequence in
$\modu\,(\Lambda)$
$$M_n\rightarrow M_{n-1}\rightarrow...\rightarrow M_1\rightarrow
M_0\rightarrow X\rightarrow 0$$
 with $M_i\in\add\,(M)$, and such that the induced sequence
 $$F(M_n)\rightarrow F(M_{n-1})\rightarrow ...\rightarrow F(M_ 1)\rightarrow F(M_0)\rightarrow F(X)\rightarrow 0$$
is exact in $\modu\,(\Gamma)$.
\

We recall next the following useful result due to M. Auslander in \cite{Auslander}.

\begin{theorem} \label{teorema de Auslander}
 For $M\in \modu\,(\Lambda)$, $\Gamma=\End_\Lambda({M})^{op}$ and $F=\Hom_\Lambda(M,-):\modu\,(\Lambda)\rightarrow\modu\,
 (\Gamma)$, the following statements hold.
\begin{itemize}
\item [(a)] The restriction $F|_{C^{M}_1}:C^{M}_1 \rightarrow \modu\,(\Gamma)$ is full and faithful.
\item [(b)] $F:\Hom_\Lambda(Z,X)\rightarrow\Hom_\Gamma(F(Z),F(X))$ is an isomorphism in
$\modu\,(R)$, for all $Z\in\add\,(M)$, $X\in\modu\,(\Lambda).$
\item [(c)] The restriction $F|_{\add\,(M)}:\add\,(M) \rightarrow \proj\,(\Gamma)$ is an equivalence of $R$-categories.
\end{itemize}
\end{theorem}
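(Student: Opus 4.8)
The plan is to prove the three items in order, each building on the previous one, using the standard Yoneda-type properties of the functor $F=\Hom_\Lambda(M,-)$ together with the defining exact sequences of modules in $C^M_1$. Item (b) is really the heart of the matter, so I would actually prove it first; items (a) and (c) then follow almost formally.

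\emph{Proof of (b).} For $Z\in\add\,(M)$ and arbitrary $X\in\modu\,(\Lambda)$, I would show $F\colon\Hom_\Lambda(Z,X)\to\Hom_\Gamma(F(Z),F(X))$ is an isomorphism of $R$-modules. Since $F$ is additive and every object of $\add\,(M)$ is a direct summand of some $M^k$, it suffices to treat $Z=M$. In that case $F(M)=\Hom_\Lambda(M,M)={}_\Gamma\Gamma$ is the regular $\Gamma$-module (using $\Gamma=\End_\Lambda(M)^{op}$ and the usual care with left/right actions), so $\Hom_\Gamma(F(M),F(X))=\Hom_\Gamma(\Gamma,F(X))\cong F(X)=\Hom_\Lambda(M,X)$ naturally in $X$; one checks this composite isomorphism is exactly $F$. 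This is the routine but slightly fiddly bimodule bookkeeping, and it is the only genuine computation in the proof.

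\emph{Proof of (a).} Let $X,Y\in C^M_1$, with defining presentation $M_1\xrightarrow{\,d\,}M_0\xrightarrow{\,p\,}X\to0$, $M_0,M_1\in\add\,(M)$, such that $F(M_1)\to F(M_0)\to F(X)\to0$ is exact. For faithfulness: if $f\colon X\to Y$ has $F(f)=0$, then $F(fp)=F(f)F(p)=0$; but $fp\colon M_0\to Y$ has source in $\add\,(M)$, so by (b) $fp=0$, and since $p$ is epi, $f=0$. For fullness: given $g\colon F(X)\to F(Y)$ in $\modu\,(\Gamma)$, compose with $F(p)$ to get $F(X)\xrightarrow{\,g\,}F(Y)$ precomposed, i.e. consider $g\circ F(p)\colon F(M_0)\to F(Y)$; by (b) this equals $F(h)$ for a unique $h\colon M_0\to Y$. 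Then $F(hd)=g F(p)F(d)=g\cdot 0=0$, so again by (b) $hd=0$, hence $h$ factors through $p=\Coker(d)$ as $h=f p$ for a unique $f\colon X\to Y$. Finally $F(f)F(p)=F(h)=g F(p)$ and $F(p)$ is epi (it is the cokernel map in the exact sequence of $F$-images), so $F(f)=g$. Thus $F|_{C^M_1}$ is full and faithful.

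\emph{Proof of (c).} Note $\add\,(M)\subseteq C^M_1$ (take $M_1=0$, $M_0=Z$), so by (a) the restriction $F|_{\add\,(M)}$ is full and faithful. As observed in the proof of (b), $F(M)={}_\Gamma\Gamma$, hence $F$ carries $\add\,(M)$ into $\add\,({}_\Gamma\Gamma)=\proj\,(\Gamma)$; and it is essentially surjective onto $\proj\,(\Gamma)$ since every projective $\Gamma$-module is a summand of some $\Gamma^k=F(M^k)$ and $F$, being full and faithful, reflects and preserves direct-sum decompositions (a summand of $F(M^k)$ is $F$ of a summand of $M^k$, which lies in $\add\,(M)$). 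Therefore $F|_{\add\,(M)}\colon\add\,(M)\to\proj\,(\Gamma)$ is an equivalence of $R$-categories. $\quad\hfill\square$

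The main obstacle is the careful identification in (b) of $\Hom_\Lambda(M,M)$ with the regular $\Gamma$-module and the verification that the natural isomorphism $\Hom_\Gamma(\Gamma,-)\cong\mathrm{id}$ intertwines with $F$; once that is in place, (a) and (c) are formal diagram chases.
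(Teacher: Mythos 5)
Your proof is correct, and since the paper does not prove Theorem \ref{teorema de Auslander} but simply cites Auslander's Brandeis lecture notes, there is no in-paper argument to compare against; the argument you give — reducing (b) to $Z=M$ via additivity and the identification $F(M)\cong{}_\Gamma\Gamma$, then deducing (a) from (b) by a diagram chase on the defining $F$-exact presentation, and (c) from (a) together with idempotent lifting — is precisely the standard proof found in Auslander's notes and in Auslander--Reiten--Smal\o. Nothing further is needed.
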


\begin{remark}
\label{observacion 2}
\rm As a consequence of Theorem \ref{teorema de Auslander} (b) and the left exactness of $F$, it follows that if $X\in C^{M}_2 $ then there exists an exact sequence in $\modu\,(\Lambda)$,
$0\rightarrow K\rightarrow M_0\rightarrow X\rightarrow 0,$
with $M_0\in\add\,(M)$ and $K\in C^{M}_1$, such that the sequence
$0\rightarrow F(K)\rightarrow F(M_0)\rightarrow F(X)\rightarrow 0$ is exact in $\modu\,(\Gamma).$
\end{remark}

The following result, proven in
\cite{Platzeck Pratti}, will be very useful in what follows, where
$\epsilon:GF\to 1$ is the co-unit of the adjunction
$\eta:\Hom_\Lambda(G-,-)\to\Hom_\Gamma(-,F-)$, that is,
$\epsilon_X=\eta^{-1}(1_{F(X)}):GF(X)\to X.$
\begin{proposition} \label{prop. C1M-isom.epsilon}  Let $M\in \modu\,(\Lambda)$, $\Gamma=\End_\Lambda(M)^{op}$,
$F=\Hom_\Lambda(M,-):\modu\,(\Lambda) \rightarrow
\modu\,(\Gamma)$ and
$G=M\otimes_\Gamma-:\modu\,(\Gamma)\rightarrow\modu\,(\Lambda).$
 Then $$C^{M}_1\subseteq\{X\in\modu\,(\Lambda)\text{ such that }\epsilon_X:GF(X)\rightarrow X\text{ is an isomorphism }\}.$$
\end{proposition}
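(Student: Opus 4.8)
The plan is to take $X \in C^M_1$, pick a presentation in $\add(M)$ that stays exact under $F$, and push it through the comparison $\epsilon_X$ to conclude that $\epsilon_X$ is an isomorphism. Concretely, since $X \in C^M_1$ there is an exact sequence $M_1 \xrightarrow{f} M_0 \xrightarrow{g} X \to 0$ with $M_0, M_1 \in \add(M)$ such that $F(M_1) \xrightarrow{F(f)} F(M_0) \xrightarrow{F(g)} F(X) \to 0$ is exact in $\modu\,(\Gamma)$. Now apply $G = M \otimes_\Gamma -$ to this last sequence; since $G$ is right exact, $GF(M_1) \to GF(M_0) \to GF(X) \to 0$ is exact in $\modu\,(\Lambda)$. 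The counit $\epsilon$ gives a commutative diagram with this row on top and the row $M_1 \xrightarrow{f} M_0 \xrightarrow{g} X \to 0$ on the bottom, the vertical maps being $\epsilon_{M_1}, \epsilon_{M_0}, \epsilon_X$.

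The key point is that $\epsilon_{M_0}$ and $\epsilon_{M_1}$ are isomorphisms: this is where Theorem \ref{teorema de Auslander} enters. For $M$ itself, $GF(M) = M \otimes_\Gamma \Hom_\Lambda(M,M) = M \otimes_\Gamma \Gamma \cong M$, and one checks $\epsilon_M$ is this canonical isomorphism; more conceptually, by Theorem \ref{teorema de Auslander}(c) the functor $F$ restricts to an equivalence $\add(M) \to \proj(\Gamma)$ with quasi-inverse induced by $G$, and the counit is compatible with this equivalence, so $\epsilon_Z$ is an isomorphism for every $Z \in \add(M)$. (Alternatively, argue directly: $\epsilon_Z$ is natural and additive in $Z$, so it suffices to check it on a single $Z$, and $Z = M$ is the case above, using that $\add(M)$ is generated by $M$ under finite direct sums and summands.)

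With $\epsilon_{M_0}$ and $\epsilon_{M_1}$ isomorphisms, a diagram chase (the five lemma applied to the two right-exact rows, or equivalently: $\epsilon_X$ is surjective because $g \circ \epsilon_{M_0}$ is surjective and factors through $\epsilon_X$; and $\epsilon_X$ is injective by chasing an element of $GF(X)$ back through $GF(M_0)$, using exactness of the top row at $GF(M_0)$ and the fact that $\Ima(GF(f)) = \epsilon_{M_0}^{-1}(\Ima f)$) shows $\epsilon_X$ is an isomorphism.

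The main obstacle — really the only nontrivial point — is verifying that $\epsilon_Z$ is an isomorphism for $Z \in \add(M)$ and, more precisely, identifying $\epsilon_M$ explicitly with the natural map $M \otimes_\Gamma \Gamma \to M$. This requires unwinding the definition $\epsilon_X = \eta^{-1}(1_{F(X)})$ together with the standard tensor-hom adjunction and checking the two descriptions of the counit agree; once that bookkeeping is done, right-exactness of $G$ and the five lemma finish the argument mechanically.
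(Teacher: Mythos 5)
Your proof is correct: you take a presentation $M_1\to M_0\to X\to 0$ in $\add(M)$ that remains exact under $F$, apply the right-exact functor $G$, use naturality of $\epsilon$ together with the fact that $\epsilon_Z$ is an isomorphism for $Z\in\add(M)$, and conclude by the five lemma. The paper itself gives no argument here but simply cites \cite[Proposition 2.2]{Platzeck Pratti}, and the argument you have written out is the standard one proving that cited result.
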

\begin{proof} See \cite[Proposition 2.2]{Platzeck Pratti}.
\end{proof}

The next propositions show that the modules in $C^{M}_2$ have nice homological properties.

\begin{proposition} \label{prop. iso entre Ext}
 Let $M\in\modu\,(\Lambda)$, $\Gamma=\End_\Lambda(M)^{op},$ $F=\Hom_\Lambda(M,-):\modu\,(\Lambda) \rightarrow \modu\,(\Gamma)$
 and $\Y\subseteq C^{M}_1$ be such that $M\in\leftidx{^{\bot_1}}{\mathcal{Y}}$. Then, for all $X\in C^{M}_2$,
$Y\in \mathcal{Y}$, the map induced by $F$ $$\rho_{X,Y}:\text{\rm Ext}_\Lambda ^{1}(X,Y)\rightarrow\text{\rm Ext}_\Gamma ^{1}(F(X),F(Y))$$
is an isomorphism of $R$-modules.
\end{proposition}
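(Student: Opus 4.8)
The plan is to use the presentation of $X \in C^M_2$ from Remark \ref{observacion 2} together with the identifications coming from Theorem \ref{teorema de Auslander} to compare the two $\Ext^1$-groups via long exact sequences. First I would fix $X \in C^M_2$ and $Y \in \Y$, and invoke Remark \ref{observacion 2} to obtain a short exact sequence $0 \to K \to M_0 \to X \to 0$ with $M_0 \in \add(M)$, $K \in C^M_1$, which stays exact after applying $F$, giving $0 \to F(K) \to F(M_0) \to F(X) \to 0$ in $\modu(\Gamma)$. Applying $\Hom_\Lambda(-,Y)$ to the first sequence and $\Hom_\Gamma(-,F(Y))$ to the second, I get two long exact sequences, and I would compare them through the natural transformation induced by $F$.

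The key step is to build the commutative ladder between the two long exact sequences:
\begin{equation*}
\begin{CD}
\Hom_\Lambda(M_0,Y) @>>> \Hom_\Lambda(K,Y) @>>> \Ext^1_\Lambda(X,Y) @>>> \Ext^1_\Lambda(M_0,Y) \\
@VVV @VVV @V{\rho_{X,Y}}VV @VVV \\
\Hom_\Gamma(F M_0,FY) @>>> \Hom_\Gamma(F K,FY) @>>> \Ext^1_\Gamma(F X,FY) @>>> \Ext^1_\Gamma(F M_0,FY)
\end{CD}
\end{equation*}
The leftmost vertical map is an isomorphism by Theorem \ref{teorema de Auslander}(b) since $M_0 \in \add(M)$. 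The second vertical map $\Hom_\Lambda(K,Y) \to \Hom_\Gamma(FK,FY)$ is an isomorphism because $K \in C^M_1$ and $F|_{C^M_1}$ is full and faithful by Theorem \ref{teorema de Auslander}(a) (and it is $R$-linear, hence an $R$-iso onto the Hom-group). The term $\Ext^1_\Lambda(M_0,Y)$ vanishes: $M_0 \in \add(M) \subseteq {}^{\perp_1}\Y$ by hypothesis $M \in {}^{\perp_1}\Y$, and $Y \in \Y$. On the $\Gamma$-side, $F(M_0) \in \proj(\Gamma)$ by Theorem \ref{teorema de Auslander}(c), so $\Ext^1_\Gamma(F M_0, F Y) = 0$ as well. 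Thus both rows end (on the right) at a zero term, and the five lemma — or rather a direct diagram chase using the two isomorphisms on the left and the two zeros on the right — forces $\rho_{X,Y}$ to be an isomorphism. One should also check that the two connecting homomorphisms and the map $\Hom_\Lambda(M_0,Y) \to \Hom_\Lambda(K,Y)$ are compatible with $F$, i.e. that the squares genuinely commute; this is the naturality of the $F$-induced maps on $\Hom$ and $\Ext$, which is routine from the functoriality of $F$ applied to the short exact sequence and its image.

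The main obstacle, and the only point requiring genuine care, is verifying commutativity of the square containing the connecting homomorphisms, namely that $\rho_{X,Y}$ intertwines $\delta_\Lambda : \Hom_\Lambda(K,Y) \to \Ext^1_\Lambda(X,Y)$ with $\delta_\Gamma : \Hom_\Gamma(FK,FY) \to \Ext^1_\Gamma(FX,FY)$. This comes down to the fact that $F$ sends the chosen short exact sequence representing a given class in $\Ext^1$ to a short exact sequence representing its image, which is exactly what Remark \ref{observacion 2} guarantees at the level of the defining sequence, combined with the fact that pushout along $K \to Y$ is carried by $F$ to pushout along $FK \to FY$ (using that $F$ is left exact and that the relevant sequences stay exact under $F$). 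Once commutativity is in hand, the conclusion is immediate. I would also remark that the isomorphism is independent of the choice of the sequence in Remark \ref{observacion 2}, since any two such choices give the same map $\rho_{X,Y}$ — it is simply the map on $\Ext^1$ functorially induced by $F$.
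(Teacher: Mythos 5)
Your proof is correct and follows essentially the same route as the paper: apply Remark \ref{observacion 2} to get the sequence $0\to K\to M_0\to X\to 0$ with $K\in C^M_1$, $M_0\in\add(M)$ that remains exact under $F$, then compare the two long exact sequences arising from $\Hom_\Lambda(-,Y)$ and $\Hom_\Gamma(-,F(Y))$, using Theorem \ref{teorema de Auslander} to get the two left isomorphisms and the vanishing of $\Ext^1_\Lambda(M_0,Y)$ and $\Ext^1_\Gamma(F(M_0),F(Y))$ on the right. The paper states the commutativity of the diagram without comment, whereas you spell out the naturality check for the connecting-map square; this is a reasonable amplification but not a different argument.
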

\begin{proof}
Let $X\in C^{M}_2 $ and $Y\in \mathcal{Y}$. By Remark \ref{observacion 2} there exists an exact sequence
$$\varepsilon:0\rightarrow K\rightarrow M_0\rightarrow X\rightarrow 0,$$
with $K\in C^{M}_1$ and $M_0\in\text{\rm add}\,(M)$, such that the sequence
$$F(\varepsilon):0\rightarrow F(K)\rightarrow F(M_0)\rightarrow F(X)\rightarrow 0$$
is exact in $\text{\rm mod}\,(\Gamma)$. Since $M_0\in\add\,(M)$ and $M\in{}^{\perp_1}\Y,$ we have that
$\text{\rm Ext}_\Lambda ^{1}(M_0,Y)=0=\text{\rm Ext}_\Gamma ^{1}(F(M_0),F(Y))$ because $F(M_0)\in\proj\,(\Gamma)$
(see Theorem \ref{teorema de Auslander} (c)). Then, by applying $\Hom_\Lambda(-,Y)$ to $\varepsilon$, and $\Hom_\Gamma(-,F(Y))$ to
$F(\varepsilon)$, we get the following exact and commutative diagram
$$ \begin {diagram} \dgARROWLENGTH=1em \scriptsize
\node {\text{\rm Hom}_\Lambda(M_0,Y)} \arrow {e,l}{} \arrow {s,r}{\simeq}
\node {\text{\rm Hom}_\Lambda(K,Y)} \arrow {e,l}{} \arrow {s,r}{\simeq}
\node {\text{\rm Ext}_\Lambda ^{1}(X,Y)} \arrow {e,l}{} \arrow {s,r}{\rho_{X,Y}}
\node {0} \\
\node {\text{\rm Hom}_\Gamma(F(M_0),F(Y))} \arrow {e,l}{}
\node {\text{\rm Hom}_\Gamma(F(K),F(Y))} \arrow {e,l}{}
\node {\text{\rm Ext}_\Gamma ^{1}(F(X),F(Y))} \arrow {e,l}{}
\node {0.}
\end {diagram}$$
By Theorem \ref{teorema de Auslander}, we have that the two first vertical arrows are isomorphisms.
Hence $\rho_{X,Y}$ is an isomorphism.
\end{proof}
\begin{proposition} \label{lema Tor y lema2}
Let $M\in\modu\,(\Lambda)$, $\Gamma=\End_\Lambda(M)^{op}$ and
$F=\Hom_\Lambda(M,-):\modu\,(\Lambda) \rightarrow \modu\,(\Gamma)$.
Then the following statements hold.
\begin{itemize}
\item[(a)] $F(C^{M}_2)\subseteq\Ker\,\Tor^{\Gamma}_1(M,-)$.
\item[(b)] If $\varepsilon:0\rightarrow F(X)\rightarrow Y'\rightarrow F(Z)\rightarrow 0$ is exact in $\modu\,(\Gamma)$, with
$X,Z\in C^{M}_2$, then there is an exact sequence $\eta:0\rightarrow X\rightarrow Y\rightarrow Z\rightarrow 0$ in $\modu\,(\Lambda)$
such that $\varepsilon\simeq F(\eta)$.
\end{itemize}
\end{proposition}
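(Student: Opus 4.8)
The plan is to prove the two statements in order, using the structural description of $C^M_2$ modules from Remark \ref{observacion 2} together with Auslander's Theorem \ref{teorema de Auslander}.

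\textbf{Part (a).} Let $X\in C^M_2$. By Remark \ref{observacion 2} there is an exact sequence $0\rightarrow K\rightarrow M_0\rightarrow X\rightarrow 0$ with $M_0\in\add\,(M)$, $K\in C^M_1$, whose image under $F$ is a short exact sequence $0\rightarrow F(K)\rightarrow F(M_0)\rightarrow F(X)\rightarrow 0$ in $\modu\,(\Gamma)$. Now I would apply $M\otimes_\Gamma-$, equivalently $G$, to this short exact sequence and look at the resulting $\Tor$ long exact sequence. Since $F(M_0)\in\proj\,(\Gamma)$ by Theorem \ref{teorema de Auslander}(c), we have $\Tor^\Gamma_i(M,F(M_0))=0$ for $i>0$, so the connecting map gives $\Tor^\Gamma_1(M,F(X))\cong \Ker\bigl(GF(K)\rightarrow GF(M_0)\bigr)$, while $\Tor^\Gamma_1(M,F(K))$ surjects onto that kernel. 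Since $K\in C^M_1$, Proposition \ref{prop. C1M-isom.epsilon} tells us $\epsilon_K:GF(K)\rightarrow K$ is an isomorphism, and the same for $M_0\in\add\,(M)$; naturality of $\epsilon$ identifies the map $GF(K)\rightarrow GF(M_0)$ with the original injection $K\rightarrow M_0$, whose kernel is $0$. Hence $\Tor^\Gamma_1(M,F(X))=0$, i.e. $F(X)\in\Ker\,\Tor^\Gamma_1(M,-)$.

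\textbf{Part (b).} Given the exact sequence $\varepsilon: 0\rightarrow F(X)\rightarrow Y'\rightarrow F(Z)\rightarrow 0$ with $X,Z\in C^M_2$, I would first produce a candidate extension in $\modu\,(\Lambda)$ by applying $G=M\otimes_\Gamma-$. Part (a) gives $\Tor^\Gamma_1(M,F(Z))=0$, so $G\varepsilon: 0\rightarrow GF(X)\rightarrow GY'\rightarrow GF(Z)\rightarrow 0$ is exact in $\modu\,(\Lambda)$. Next, since $X,Z\in C^M_2\subseteq C^M_1$, Proposition \ref{prop. C1M-isom.epsilon} gives that $\epsilon_X$ and $\epsilon_Z$ are isomorphisms; using naturality of $\epsilon$ to form the commutative ladder from $G\varepsilon$ to the desired sequence, I would push out (or simply transport along the isomorphisms $\epsilon_X,\epsilon_Z$) to get an exact sequence $\eta:0\rightarrow X\rightarrow Y\rightarrow Z\rightarrow 0$ with $Y:=GY'$, fitting in a commutative diagram with vertical maps $\epsilon_X,\ \mathrm{id}_{GY'},\ \epsilon_Z$ (all isomorphisms on the ends). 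It remains to check $F(\eta)\simeq\varepsilon$. Applying $F$ to the ladder and using that $\epsilon_X,\epsilon_Z$ are isomorphisms reduces this to comparing $F(G\varepsilon)$ with $\varepsilon$; and the unit-counit (triangle) identity for the adjunction $(G,F)$ together with $F\epsilon_X, F\epsilon_Z$ being split by the units gives an isomorphism of short exact sequences $\varepsilon\simeq FG\varepsilon$ restricted appropriately, hence $\varepsilon\simeq F(\eta)$.

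\textbf{Main obstacle.} The delicate point is bookkeeping with the adjunction in part (b): one must carefully track how $G\varepsilon$, the counit maps $\epsilon_X,\epsilon_Z$, and the application of $F$ interact, and verify that the resulting isomorphism $F(\eta)\simeq\varepsilon$ is an isomorphism of extensions (i.e. compatible with the identifications on the outer terms), not merely an abstract isomorphism of the middle objects. Ensuring $G\varepsilon$ is exact on the left — which is exactly where part (a) is needed, applied to $Z$ — is the other place where something could go wrong, so I would state that dependence explicitly. The equivalence $F|_{\add(M)}\colon\add(M)\to\proj(\Gamma)$ and the isomorphism of Theorem \ref{teorema de Auslander}(b) are the tools that make all the diagram chases go through.
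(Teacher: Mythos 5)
Your proposal is correct and, for part (b), follows essentially the same route as the paper: apply $G$ to $\varepsilon$, use (a) to obtain short exactness of $G(\varepsilon)$, transport along the counit isomorphisms $\epsilon_X,\epsilon_Z$ (available via Proposition \ref{prop. C1M-isom.epsilon} since $C^M_2\subseteq C^M_1$) to obtain $\eta$ with middle term $Y=G(Y')$, and then compare $\varepsilon$ with $FG(\varepsilon)$ via the units of the adjunction. For part (a) the paper merely refers to an external source, while you supply an explicit argument from Remark \ref{observacion 2}, Theorem \ref{teorema de Auslander}(c), and naturality of the counit; this is a worthwhile self-contained expansion.

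Two small cautions. In your proof of (a), the phrase that $\Tor^\Gamma_1(M,F(K))$ ``surjects onto'' $\Ker(GF(K)\rightarrow GF(M_0))$ is a slip: in the Tor long exact sequence $\Tor^\Gamma_1(M,F(K))$ maps into $\Tor^\Gamma_1(M,F(M_0))=0$, not into the next term. This is harmless, since all you actually need is the correct identification $\Tor^\Gamma_1(M,F(X))\cong\Ker(GF(K)\rightarrow GF(M_0))$ via the connecting map, and that this kernel vanishes because naturality of $\epsilon$ (with $\epsilon_K$ and $\epsilon_{M_0}$ isomorphisms) identifies $GF(K)\rightarrow GF(M_0)$ with the injection $K\rightarrow M_0$. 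In part (b), your final step is stated too loosely to stand on its own: to invoke a five-lemma argument and conclude that the middle unit $\mu_{Y'}\colon Y'\rightarrow FG(Y')$ is an isomorphism, one must first see that the bottom row $0\rightarrow FGF(X)\rightarrow FG(Y')\rightarrow FGF(Z)$ is exact on the right, i.e.\ that $FG(g)$ is surjective. The paper's proof gets this from the commutativity $\mu_{F(Z)}\circ g = FG(g)\circ\mu_{Y'}$ together with $\mu_{F(Z)}$ being an isomorphism (via the triangle identity and $\epsilon_Z$ iso) and $g$ surjective, and only then concludes $\mu_{Y'}$ is an isomorphism. Your sketch gestures at the triangle identities but does not make the surjectivity of $FG(g)$ explicit; spelling that out would close the gap you yourself flagged as the delicate point.
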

\begin{proof}
We consider the functor $G=M\otimes_\Gamma-:\modu\,(\Gamma)\rightarrow\modu\,(\Lambda).$\\
\indent (a) The arguments in the proof of \cite[Theorem 3.7]{Platzeck Pratti} can be easily adapted to this case.

(b) Let $\varepsilon:0\rightarrow F(X)\rightarrow
Y'\stackrel{g}{\rightarrow} F(Z)\rightarrow 0$ be exact in
$\modu\,(\Gamma)$,
 with $X,Z\in C^{M}_2$. Applying $G$ to $\varepsilon$, we have the exact sequence
$$\text{\rm Tor}^{\Gamma}_{1}(M,F(Z))\rightarrow GF(X)\rightarrow G(Y')\rightarrow GF(Z)\rightarrow 0.$$
Since $C^{M}_2\subseteq C^{M}_1$, from the last sequence and (a), we get the exact and commutative diagram
$$ \begin {diagram} \dgARROWLENGTH=1em
\node {G(\varepsilon):0} \arrow {e,l}{}
\node {GF(X)} \arrow {e,l}{} \arrow {s,r}{\simeq}
\node {G(Y')} \arrow {e,l}{} \arrow {s,r,=}{}
\node {GF(Z)} \arrow {e,l}{} \arrow {s,r}{\simeq}
\node {0} \\
\node {\;\eta:0} \arrow {e,l}{}
\node {X} \arrow {e,l}{}
\node {G(Y')} \arrow {e,l}{}
\node {Z} \arrow {e,l}{}
\node {0.}
\end{diagram}$$
We  will show next that $\eta$ is the desired sequence, where $Y=G(Y')$. Indeed, by applying the left exact functor
$F$ to $G(\varepsilon)$, we obtain the following exact and commutative diagram
$$ \begin {diagram} \dgARROWLENGTH=1em
\node {0} \arrow {e,l}{}
\node {F(X)} \arrow {e,l}{} \arrow {s,r}{\simeq}
\node {Y'} \arrow {e,l}{g} \arrow {s,r}{h}
\node {F(Z)} \arrow {e,l}{} \arrow {s,lr}{\theta}{\simeq}
\node {0} \\
\node {0} \arrow {e,l}{}
\node {FGF(X)} \arrow {e,l}{}
\node {FG(Y')} \arrow {e,l}{FG(g)}
\node {FGF(Z)} 
\end{diagram}$$
From the equality $\theta g=FG(g)h$ and the fact that $\theta$ is an isomorphism, it follows that $FG(g)$
is an epimorphism. Hence $h$ is an isomorphism, and this proves that $\varepsilon\simeq F(\eta)$.
\end{proof}

For a functor $F:\mathcal{A}\rightarrow\mathcal{B}$ and a class of objects $\X$ in $\mathcal{A}$, let
$F(\X)=\{Z\in\mathcal{B}:Z\simeq F(X) \text{ for some
}X\in\X\}$.

\begin{corollary} \label{Corol. F(X) cerrada por extensiones}
 Let $M\in\modu\,(\Lambda)$, $\Gamma=\End_\Lambda(M)^{op},$
 $F=\Hom_\Lambda(M,-):\modu\,(\Lambda) \rightarrow \modu\,(\Gamma)$
 and $\X\subseteq C^{M}_2.$ If $\X$ is closed under extensions, then
 $F(\X)$ is so.
\end{corollary}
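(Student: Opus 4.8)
The plan is to take an arbitrary short exact sequence in $\modu\,(\Gamma)$ whose outer terms lie in $F(\X)$ and produce a lift to $\modu\,(\Lambda)$ whose middle term also lies in $F(\X)$. So suppose $Z_1\in F(\X)$ has the form $Z_1\simeq F(X_1)$ with $X_1\in\X$, and $Z_2\simeq F(X_2)$ with $X_2\in\X$, and we are given a short exact sequence $\varepsilon\colon 0\to Z_1\to Y'\to Z_2\to 0$ in $\modu\,(\Gamma)$. Transporting along the isomorphisms $Z_i\simeq F(X_i)$, we may assume $\varepsilon$ has the form $0\to F(X_1)\to Y'\to F(X_2)\to 0$ with $X_1,X_2\in\X\subseteq C^{M}_2$.

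Now I would invoke Proposition \ref{lema Tor y lema2}(b) directly: since $X_1,X_2\in C^{M}_2$, there is an exact sequence $\eta\colon 0\to X_1\to Y\to X_2\to 0$ in $\modu\,(\Lambda)$ with $F(\eta)\simeq\varepsilon$. In particular $Y'\simeq F(Y)$. It remains only to check that $Y\in\X$, and this is where the hypothesis that $\X$ is closed under extensions enters: $\eta$ is an extension of $X_2$ by $X_1$ with both $X_1,X_2\in\X$, so $Y\in\X$. Therefore $Y'\simeq F(Y)\in F(\X)$, which shows $F(\X)$ is closed under extensions.

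There is essentially no hard part here — the entire content has been done in Proposition \ref{lema Tor y lema2}(b); the corollary is just the observation that closure under extensions of $\X$ in $\modu\,(\Lambda)$ transfers to $F(\X)$ because every extension upstairs that maps (under $F$) onto the given extension downstairs has its middle term forced to lie in $\X$. The only point requiring a word of care is the bookkeeping at the start: a short exact sequence with outer terms merely isomorphic (not equal) to $F(X_i)$ must first be rigidified to one literally of the form $0\to F(X_1)\to\cdot\to F(X_2)\to 0$ before Proposition \ref{lema Tor y lema2}(b) applies, and then the resulting lift and the conclusion $Y'\simeq F(Y)$ must be pulled back through those same isomorphisms. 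This is routine diagram chasing and I would not spell it out in detail.
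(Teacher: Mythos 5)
Your proof is correct and is exactly the argument the paper has in mind: the paper's proof consists of the single sentence ``The proof follows immediately from Proposition \ref{lema Tor y lema2} (b),'' and what you have written is precisely that implication spelled out, including the routine reduction to a sequence literally of the form $0\to F(X_1)\to Y'\to F(X_2)\to 0$.
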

\begin{proof}
The proof follows immediately from Proposition \ref{lema Tor y lema2} (b).
\end{proof}

Let $\Lambda$ be an algebra and $\C$ be a class of objects in
$\modu\,(\Lambda)$. We denote by $\F(\C)$ the class of the
$\Lambda$-modules having a $\C$-filtration, that is, a filtration $0=M_0\subseteq
M_1\subseteq\cdots\subseteq M_n=M$ of submodules with factors
$M_{i+1}/M_i$ isomorphic to a module in $\C$ for all $i$. Then $\mathcal{F}(\mathcal{C})$ is the smallest class in
$\modu\,(\Lambda)$ which is closed under extensions and contains
$\C$. Moreover, it is straightforward to see that
${}^{\perp_1}\C={}^{\perp_1}\F(\C).$

\begin{corollary} \label{Corol. F(C)=mathcal{F}(F(C))}
 Let $M\in\modu\,(\Lambda)$, $\Gamma=\End_\Lambda(M)^{op},$
 $F=\Hom_\Lambda(M,-):\modu\,(\Lambda) \rightarrow \modu\,(\Gamma)$
 and $\C\subseteq\modu\,(\Lambda)$. If the restriction $F|_{\F(\C)}:\F(\C)\to \modu\,(\Gamma)$
is an exact functor and $\F(\C)\subseteq C^{M}_2$, then
$F(\mathcal{F}(\mathcal{C}))=\mathcal{F}(F(\mathcal{C})).$
\end{corollary}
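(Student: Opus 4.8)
The plan is to prove the two inclusions $F(\mathcal{F}(\mathcal{C}))\subseteq\mathcal{F}(F(\mathcal{C}))$ and $\mathcal{F}(F(\mathcal{C}))\subseteq F(\mathcal{F}(\mathcal{C}))$ separately, both by induction on the length of a filtration, using the hypotheses that $F$ restricted to $\F(\C)$ is exact and that $\F(\C)\subseteq C^M_2$.

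\medskip

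For the inclusion $F(\mathcal{F}(\mathcal{C}))\subseteq\mathcal{F}(F(\mathcal{C}))$, let $X\in\F(\C)$ with a $\C$-filtration $0=X_0\subseteq X_1\subseteq\cdots\subseteq X_n=X$. I would induct on $n$. The case $n\le 1$ is trivial. For the inductive step, apply $F$ to the exact sequence $0\to X_{n-1}\to X_n\to X_n/X_{n-1}\to 0$; since $X_{n-1},X_n\in\F(\C)$ and $F|_{\F(\C)}$ is exact, we obtain an exact sequence $0\to F(X_{n-1})\to F(X_n)\to F(X_n/X_{n-1})\to 0$ in $\modu\,(\Gamma)$. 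Here $X_n/X_{n-1}\in\C$, so $F(X_n/X_{n-1})\in F(\C)$, while $F(X_{n-1})\in\mathcal{F}(F(\C))$ by the inductive hypothesis. Since $\mathcal{F}(F(\C))$ is closed under extensions, $F(X_n)\in\mathcal{F}(F(\C))$, which completes the induction.

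\medskip

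For the reverse inclusion $\mathcal{F}(F(\mathcal{C}))\subseteq F(\mathcal{F}(\mathcal{C}))$, let $Y\in\mathcal{F}(F(\C))$ with an $F(\C)$-filtration $0=Y_0\subseteq Y_1\subseteq\cdots\subseteq Y_n=Y$, and again induct on $n$. The case $n\le 1$ is immediate since each filtration factor is (isomorphic to) some $F(C)$ with $C\in\C\subseteq\F(\C)$. For the inductive step, consider the exact sequence $0\to Y_{n-1}\to Y_n\to Y_n/Y_{n-1}\to 0$, where $Y_n/Y_{n-1}\simeq F(C)$ for some $C\in\C$ and $Y_{n-1}\simeq F(Z)$ for some $Z\in\F(\C)$ by induction. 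The key point is that both $Z$ and $C$ lie in $\F(\C)\subseteq C^M_2$, so Proposition \ref{lema Tor y lema2}(b) applies: there is an exact sequence $\eta:0\to Z\to W\to C\to 0$ in $\modu\,(\Lambda)$ with $F(\eta)$ isomorphic to the given sequence, hence $Y_n\simeq F(W)$. Since $\F(\C)$ is closed under extensions and $Z,C\in\F(\C)$, we get $W\in\F(\C)$, so $Y_n\in F(\F(\C))$, completing the induction.

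\medskip

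I expect the main obstacle to be bookkeeping in the reverse inclusion: one must be careful that Proposition \ref{lema Tor y lema2}(b) is stated for short exact sequences of the form $0\to F(X)\to Y'\to F(Z)\to 0$ with $X,Z\in C^M_2$, so at each step of the induction one needs both the kernel term $Y_{n-1}$ and the cokernel term $Y_n/Y_{n-1}$ to be genuinely of the form $F(-)$ with argument in $C^M_2$ — the inductive hypothesis supplies the first and $\F(\C)\subseteq C^M_2$ supplies the second. The role of the exactness hypothesis on $F|_{\F(\C)}$ is only needed for the first (easy) inclusion; once that and closure under extensions of $\F(\C)$ are in hand, everything else is formal.
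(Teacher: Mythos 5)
Your proof is correct and follows essentially the same route as the paper: the first inclusion from exactness of $F|_{\F(\C)}$, and the second from Proposition \ref{lema Tor y lema2}(b) combined with $\F(\C)\subseteq C^M_2$ and closure of $\F(\C)$ under extensions. The only cosmetic difference is that you inline the inductions, whereas the paper factors the reverse inclusion through Corollary \ref{Corol. F(X) cerrada por extensiones} (which states that $F$ carries extension-closed subclasses of $C^M_2$ to extension-closed classes) and then invokes the minimality of $\F(F(\C))$.
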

\begin{proof} Since the restriction $F|_{\F(\C)}:\F(\C)\to\modu\,(\Gamma)$ is an exact functor, it follows that
$F(\mathcal{F}(\mathcal{C}))\subseteq\mathcal{F}(F(\mathcal{C})).$
On the other hand, the condition $\mathcal{F}(\mathcal{C})\subseteq
C^{M}_2$ and Corollary \ref{Corol. F(X) cerrada por extensiones} give us the
other inclusion.
\end{proof}

We recall that a class $\X$ of objects in $\modu\,(\Lambda)$ is
\emph{resolving} if it is closed under extensions, kernels of
epimorphisms and $\proj\,(\Lambda)\subseteq\X$ (see \cite{AR}).

\begin{lemma}\label{X resolvente}
If $\X$ is a resolving subcategory of $\modu\,(\Lambda)$, then
$\proj\,(\Lambda)=\X\cap\leftidx{^{\bot_1}}{\mathcal{X}}$.
\end{lemma}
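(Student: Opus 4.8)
The plan is to prove the two inclusions $\proj(\Lambda)\subseteq\X\cap{}^{\perp_1}\X$ and $\X\cap{}^{\perp_1}\X\subseteq\proj(\Lambda)$ separately. The first inclusion is essentially immediate: since $\X$ is resolving we already have $\proj(\Lambda)\subseteq\X$ by definition, and for any projective $P$ and any $X\in\X$ we have $\Ext_\Lambda^1(P,X)=0$, so $P\in{}^{\perp_1}\X$ as well. Hence only the reverse inclusion requires work.

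For the reverse inclusion, let $M\in\X\cap{}^{\perp_1}\X$; I would show $M$ is projective by showing that every short exact sequence $0\to K\to P\to M\to 0$ with $P$ projective splits. The key observation is that, since $\X$ is resolving, it is closed under kernels of epimorphisms, and $P\in\proj(\Lambda)\subseteq\X$; therefore $K\in\X$. Now $M\in{}^{\perp_1}\X$ gives $\Ext_\Lambda^1(M,K)=0$, so the sequence $0\to K\to P\to M\to 0$ splits, exhibiting $M$ as a direct summand of the projective module $P$, hence projective.

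The main (and really only) point to be careful about is making sure the ambient short exact sequence with projective middle term exists and that its kernel lands in $\X$: this is exactly where the ``kernels of epimorphisms'' and ``$\proj(\Lambda)\subseteq\X$'' parts of the resolving hypothesis are used, together with the elementary fact that every finitely generated module over an artin algebra admits a projective cover (or just any projective presentation). There is no serious obstacle here; the argument is a standard ``Ext-orthogonality forces projectivity'' dimension-shifting argument, and the statement is the module-category analogue of the fact that in a Frobenius-type or Ext-orthogonal setup the projectives are recovered as the intersection of the resolving class with its own $\Ext^1$-perpendicular.
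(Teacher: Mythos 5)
Your proof is correct and matches the paper's argument essentially verbatim: both directions are handled the same way, and the reverse inclusion uses the same splitting of a projective presentation $0\to K\to P\to M\to 0$ with $K\in\X$ (by the resolving hypothesis) and $\Ext^1_\Lambda(M,K)=0$.
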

\begin{proof} Assume that $\X$ is resolving. It is clear that
$\proj\,(\Lambda)\subseteq\mathcal{X}\cap\leftidx{^{\bot_1}}{\mathcal{X}}$.
The proof is completed by showing the other inclusion. Let
$X\in\mathcal{X}\cap\leftidx{^{\bot_1}}{\mathcal{X}}$, and consider
the exact sequence $\varepsilon:0\rightarrow K\rightarrow
P_0(X)\rightarrow X\rightarrow 0$ in $\modu\,(\Lambda),$ where
$P_0(X)$ is the projective cover of $X$. Since $\mathcal{X}$ is resolving, we conclude that $K\in\mathcal{X}$,
and hence $\varepsilon$ splits, because $X\in\leftidx{^{\bot_1}}{\mathcal{X}}$. Thus $X\in\proj\,(\Lambda)$.
\end{proof}

The following lemma will be useful in the sequel.

\begin{lemma} \label{lema cuasi-inversa de F}
 Let $M\in\modu\,(\Lambda)$, $\Gamma=\End_\Lambda(M)^{op},$
 $F=\Hom_\Lambda(M,-):\modu\,(\Lambda) \rightarrow \modu\,(\Gamma)$
  and $G=M\otimes_\Gamma -:\modu\,(\Gamma)\rightarrow \modu\,(\Lambda).$
  Let $\mathcal{A}$ and $\mathcal{B}$ be full subcategories of $\modu\,(\Lambda)$ and
  $\modu\,(\Gamma)$ respectively, closed under isomorphisms and such that the restriction $F|_\mathcal{A}:\mathcal{A}\rightarrow\mathcal{B}$ is an equivalence of
  categories. If $\epsilon_A:GF(A)\rightarrow A$ is an isomorphism for all $A\in\mathcal{A}$,
  then the restriction $G|_\mathcal{B}:\mathcal{B}\rightarrow\mathcal{A}$ is a quasi-inverse of $F|_\mathcal{A}.$
\end{lemma}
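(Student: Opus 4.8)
The statement to prove is Lemma \ref{lema cuasi-inversa de F}: under the hypotheses that $F|_{\mathcal A}\colon\mathcal A\to\mathcal B$ is an equivalence and $\epsilon_A\colon GF(A)\to A$ is an isomorphism for all $A\in\mathcal A$, the restriction $G|_{\mathcal B}\colon\mathcal B\to\mathcal A$ is a quasi-inverse of $F|_{\mathcal A}$. So let me think about how to prove this.

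First, I need to know what it means for $G|_{\mathcal B}$ to be a quasi-inverse. It means: $G|_{\mathcal B}$ actually lands in $\mathcal A$ (i.e., $G(\mathcal B)\subseteq\mathcal A$ up to iso — which holds since $\mathcal A$ is closed under isomorphisms), and there are natural isomorphisms $G|_{\mathcal B}\circ F|_{\mathcal A}\cong 1_{\mathcal A}$ and $F|_{\mathcal A}\circ G|_{\mathcal B}\cong 1_{\mathcal B}$.

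The given isomorphism $\epsilon_A\colon GF(A)\to A$ immediately supplies one of these: I need to check that $\epsilon$ restricted to $\mathcal A$ is natural, but that's automatic since $\epsilon\colon GF\to 1$ is the counit of the adjunction $\eta\colon\operatorname{Hom}_\Lambda(G-,-)\to\operatorname{Hom}_\Gamma(-,F-)$ mentioned before Proposition \ref{prop. C1M-isom.epsilon}, hence natural on all of $\operatorname{mod}(\Lambda)$, so a fortiori natural on $\mathcal A$. The hypothesis says each component is an isomorphism, so $GF|_{\mathcal A}\cong 1_{\mathcal A}$; in particular $G(F(A))\cong A\in\mathcal A$, and since every object of $\mathcal B$ is isomorphic to some $F(A)$ with $A\in\mathcal A$ (as $F|_{\mathcal A}$ is essentially surjective onto $\mathcal B$) and $\mathcal A$ is closed under isomorphisms, $G|_{\mathcal B}$ does land in $\mathcal A$.

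**Key steps, in order.** (1) Observe $\epsilon|_{\mathcal A}\colon GF|_{\mathcal A}\Rightarrow 1_{\mathcal A}$ is a natural isomorphism by the above. (2) Conclude $G(\mathcal B)\subseteq\mathcal A$, so $G|_{\mathcal B}\colon\mathcal B\to\mathcal A$ is well-defined. (3) For the other composite: since $F|_{\mathcal A}$ is an equivalence it has \emph{some} quasi-inverse $H\colon\mathcal B\to\mathcal A$; the standard categorical fact is that a functor admitting a one-sided natural iso $GF|_{\mathcal A}\cong 1_{\mathcal A}$ together with $F|_{\mathcal A}$ being an equivalence forces $F|_{\mathcal A}G|_{\mathcal B}\cong 1_{\mathcal B}$ as well. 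Concretely: apply $F$ to $\epsilon_A$ to get $F(\epsilon_A)\colon FGF(A)\to F(A)$, an isomorphism for every $A\in\mathcal A$; since $F|_{\mathcal A}$ is essentially surjective onto $\mathcal B$, every $B\in\mathcal B$ is $B\cong F(A)$ for some $A$, and then $FG(B)\cong FGF(A)\cong F(A)\cong B$. To get this as a \emph{natural} isomorphism $\alpha\colon F|_{\mathcal A}G|_{\mathcal B}\Rightarrow 1_{\mathcal B}$, I would transport the natural iso along a chosen quasi-inverse $H$ with unit/counit $(\mu\colon 1\Rightarrow HF,\ \nu\colon FH\Rightarrow 1)$: from $\epsilon|_{\mathcal A}$ and $\mu$ one builds a natural isomorphism $G|_{\mathcal B}\cong H$ (namely $G\xrightarrow{G\nu^{-1}} GFH \xrightarrow{\epsilon H} H$, using that on objects of $\mathcal B\simeq F(\mathcal A)$ all the relevant components are isos), and then composing with $\nu\colon FH\Rightarrow 1_{\mathcal B}$ yields $FG|_{\mathcal B}\cong FH\cong 1_{\mathcal B}$. (4) Having both natural isomorphisms, conclude $G|_{\mathcal B}$ is a quasi-inverse of $F|_{\mathcal A}$.

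**Main obstacle.** The only delicate point is upgrading the pointwise/essential-surjectivity-based isomorphism $FG(B)\cong B$ into a genuine natural transformation, i.e.\ making sure the naturality squares commute rather than just proving the components are isos. The clean way around this is precisely step (3): rather than building $\alpha$ by hand, exhibit a natural iso $G|_{\mathcal B}\cong H$ for an already-known quasi-inverse $H$ of $F|_{\mathcal A}$ — this reduces everything to naturality of $\epsilon$, of $\nu$, and of the triangle identities, all of which are given. Once $G|_{\mathcal B}\cong H$ naturally, the fact that $H$ is a quasi-inverse gives $F|_{\mathcal A}G|_{\mathcal B}\cong F|_{\mathcal A}H\cong 1_{\mathcal B}$, and together with step (1) this finishes the proof.
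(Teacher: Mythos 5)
Your proof is correct, but it takes a more circuitous route than the paper's. Both proofs begin the same way: use density of $F|_{\mathcal A}$, closure of $\mathcal A$ under isomorphisms, and the isomorphisms $\epsilon_A$ to conclude $G(\mathcal B)\subseteq\mathcal A$, and observe that $\epsilon|_{\mathcal A}\colon GF|_{\mathcal A}\Rightarrow 1_{\mathcal A}$ is a natural isomorphism because $\epsilon$ is the adjunction counit. Where you diverge is in producing the second natural isomorphism $F|_{\mathcal A}G|_{\mathcal B}\cong 1_{\mathcal B}$: you treat this as a "delicate point," and your solution is to pick an arbitrary quasi-inverse $H$ of $F|_{\mathcal A}$, build a natural isomorphism $G|_{\mathcal B}\cong H$ via $G\xrightarrow{G\nu^{-1}}GFH\xrightarrow{\epsilon H}H$, and then whisker with $F$. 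This works, but it overlooks that the adjunction $G\dashv F$ hands you a canonical natural transformation $\mu\colon 1\Rightarrow FG$ (the unit) for free, with no choices to make. The paper simply checks that the component $\mu_B$ is an isomorphism for $B\in\mathcal B$: writing $B\cong F(A)$ with $A\in\mathcal A$, the triangle identity $F(\epsilon_A)\circ\mu_{F(A)}=1_{F(A)}$ plus the hypothesis that $\epsilon_A$ is an isomorphism forces $\mu_{F(A)}$, hence $\mu_B$, to be one. So the obstacle you worked around is not present in the paper's argument at all: naturality of $\mu$ comes built-in, and the whole second half collapses to one application of a triangle identity. Your invocation of the general fact "if $F$ is an equivalence and $GF\cong 1$ then $FG\cong 1$" is correct and more general (it needs no adjunction), but here it trades a one-line triangle-identity computation for the construction and verification of an auxiliary natural isomorphism $G|_{\mathcal B}\cong H$.
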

\begin{proof} Let $B\in\mathcal{B}.$
First, we prove that $G(B)\in\mathcal{A}$. Indeed, since
$B\in\mathcal{B}$ and $F|_\mathcal{A}$ is dense, there exists an isomorphism
$\rho:B\rightarrow F(A)$ in $\mathcal{B}$ for some
$A\in\mathcal{A}.$ Therefore $G(B)\simeq GF(A)\simeq A$ and hence
$G(B)\in\mathcal{A}.$ \

Let now $\mu:1\to FG$ denote the
unit of the adjunction
$\eta:\Hom_\Lambda(G-,-)\to\Hom_\Gamma(-,F-)$, that is,
$\mu_Y=\eta(1_{G(Y)}):Y\to FG(Y).$ We next prove that the natural
transformation $\mu_B:B\rightarrow FG(B)$ is an isomorphism for all
$B\in\mathcal{B}$. To do so, we consider the following commutative
diagram
$$ \begin {diagram} \dgARROWLENGTH=1em
\node {B} \arrow {e,l}{\mu_B} \arrow {s,lr}{\rho}{\simeq}
\node {FG(B)} \arrow {s,lr}{FG(\rho)}{\simeq}\\
\node {F(A)} \arrow {e,r}{\mu_{F(A)}}
\node {FGF(A)} \arrow {e,r}{F(\epsilon_A)}
\node {F(A).}
\end{diagram}$$
Observe that $F(\epsilon_A)$ is an isomorphism since $\epsilon_A$ is
so. From this and the fact that $F(\epsilon_A)\mu_{F(A)}=1_{F(A)}$,
we conclude that $\mu_{F(A)}$ is an isomorphism. Hence $\mu_B$
is an isomorphism and this proves the lemma.
\end{proof}

We are in a position to prove the main result of this section, which we state in the next theorem.

\begin{theorem} \label{teo. equivalencia general}
 Let $\mathcal{C}$ be a class of objects in $\modu\,(\Lambda)$, $M\in\leftidx{^{\bot_1}}{\mathcal{C}}$,
 $\Gamma=\End_\Lambda(M)^{op}$, $F=\Hom_\Lambda(M,-):\modu\,(\Lambda)\rightarrow \modu\,(\Gamma)$ and
 $G= M\otimes_\Gamma -:\modu\,(\Gamma)\rightarrow \modu\,(\Lambda)$. If $\mathcal{F}(\mathcal{C})\subseteq C^{M}_2$,
 then the following statements hold.
\begin{itemize}
\item[(a)] $F|_{\F(\C)}:\F(\C)\to \F(F(\C))$ is an exact equivalence of categories and
$G|_{\F(F(\C))}:\F(F(\C))\to \F(\C)$ is a quasi-inverse of
$F|_{\F(\C)}.$
\item[(b)] If $\add\,(M)\subseteq\F(\C)$ and $\F(F(\C))$ is closed under kernels of epimorphisms, then
$\F(F(\C))$ is resolving and $\add\,(M)=\F(\C)\cap
\leftidx{^{\bot_1}}\F(\C).$
\end{itemize}
\end{theorem}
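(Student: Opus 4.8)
The plan is to prove part (a) first, and then deduce part (b) using Lemma \ref{X resolvente} together with part (a). For part (a), the first step is to check that the restriction $F|_{\F(\C)}$ is actually an exact functor onto $\modu\,(\Gamma)$. The key point is that any short exact sequence with all three terms in $\F(\C)$ is sent by $F$ to a short exact sequence: since $M\in{}^{\perp_1}\C={}^{\perp_1}\F(\C)$, applying $F=\Hom_\Lambda(M,-)$ to $0\to A\to B\to C\to 0$ with $A,B,C\in\F(\C)$ gives $0\to F(A)\to F(B)\to F(C)\to \Ext^1_\Lambda(M,A)=0$, so $F$ is exact on $\F(\C)$. Combined with the hypothesis $\F(\C)\subseteq C^M_2$, Corollary \ref{Corol. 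F(C)=mathcal{F}(F(C))} then gives $F(\F(\C))=\F(F(\C))$, so $F|_{\F(\C)}$ lands in $\F(F(\C))$ and is dense there.

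Next I would show $F|_{\F(\C)}:\F(\C)\to\F(F(\C))$ is full and faithful. This is immediate from Theorem \ref{teorema de Auslander}(a), since $\F(\C)\subseteq C^M_2\subseteq C^M_1$ and $F|_{C^M_1}$ is full and faithful; together with density from the previous step, $F|_{\F(\C)}$ is an equivalence of categories. To identify the quasi-inverse as $G|_{\F(F(\C))}$, I would invoke Lemma \ref{lema cuasi-inversa de F} with $\mathcal{A}=\F(\C)$ and $\mathcal{B}=\F(F(\C))$: both are closed under isomorphisms, $F|_\mathcal{A}$ is an equivalence onto $\mathcal{B}$ by what we just proved, and for every $A\in\F(\C)\subseteq C^M_1$ the counit $\epsilon_A:GF(A)\to A$ is an isomorphism by Proposition \ref{prop. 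C1M-isom.epsilon}. Hence $G|_{\F(F(\C))}$ is a quasi-inverse of $F|_{\F(\C)}$, completing (a).

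For part (b), assume $\add\,(M)\subseteq\F(\C)$ and that $\F(F(\C))$ is closed under kernels of epimorphisms. Since $\F(F(\C))=\F(F(\C))$ is a class of the form $\F(-)$, it is closed under extensions, and by hypothesis it is closed under kernels of epimorphisms; to see it is resolving it remains to check $\proj\,(\Gamma)\subseteq\F(F(\C))$. But $\proj\,(\Gamma)=F(\add\,(M))$ by Theorem \ref{teorema de Auslander}(c), and $\add\,(M)\subseteq\F(\C)$ forces $F(\add\,(M))\subseteq F(\F(\C))=\F(F(\C))$, so $\F(F(\C))$ is resolving. Now apply Lemma \ref{X resolvente} to $\X=\F(F(\C))$ to get $\proj\,(\Gamma)=\F(F(\C))\cap{}^{\perp_1}\F(F(\C))$. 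Finally I would transport this equality back along the equivalence $F|_{\F(\C)}$: using Proposition \ref{prop. iso entre Ext} (with $\Y=\F(\C)\subseteq C^M_1$, noting $M\in{}^{\perp_1}\F(\C)$) the functor $F$ sends $\Ext^1_\Lambda(X,Y)$ isomorphically to $\Ext^1_\Gamma(F(X),F(Y))$ for $X,Y\in\F(\C)$, so $X\in{}^{\perp_1}\F(\C)$ if and only if $F(X)\in{}^{\perp_1}\F(F(\C))$; combined with the density/fullness of $F|_{\F(\C)}$ and $F(\add\,(M))=\proj\,(\Gamma)$, the equality $\proj\,(\Gamma)=\F(F(\C))\cap{}^{\perp_1}\F(F(\C))$ pulls back to $\add\,(M)=\F(\C)\cap{}^{\perp_1}\F(\C)$.

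The main obstacle I anticipate is making the ``transport of ${}^{\perp_1}$ along the equivalence'' step in part (b) fully rigorous: one must be careful that the Ext-isomorphism of Proposition \ref{prop. iso entre Ext} applies in both directions (it requires the first argument in $C^M_2$ and the second in a subclass of $C^M_1$ with $M$ in its $\perp_1$, and here $\F(\C)$ plays both roles), and that $F^{-1}(\proj\,(\Gamma))\cap\F(\C)=\add\,(M)$, which again uses Theorem \ref{teorema de Auslander}(c) plus faithfulness. Everything else is a bookkeeping assembly of the already-established lemmas.
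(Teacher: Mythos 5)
Your proof is correct, and part (a) follows the paper's proof almost line for line: exactness from $M\in{}^{\perp_1}\F(\C)$, density from Corollary \ref{Corol. F(C)=mathcal{F}(F(C))}, full faithfulness from Theorem \ref{teorema de Auslander}(a), and the quasi-inverse from Proposition \ref{prop. C1M-isom.epsilon} and Lemma \ref{lema cuasi-inversa de F}. Your part (b) also opens exactly as the paper does, establishing $\proj\,(\Gamma)=\F(F(\C))\cap{}^{\perp_1}\F(F(\C))$ via Lemma \ref{X resolvente}.

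Where you diverge is the final transport step of (b). You invoke Proposition \ref{prop. iso entre Ext} (with $\Y=\F(\C)$) to get the Ext-isomorphism $\Ext^1_\Lambda(X,Y)\simeq\Ext^1_\Gamma(F(X),F(Y))$ for $X,Y\in\F(\C)$, and from there deduce directly that $F$ carries $\F(\C)\cap{}^{\perp_1}\F(\C)$ into $\proj\,(\Gamma)$, hence into $F(\add\,(M))$. The paper instead avoids Proposition \ref{prop. iso entre Ext} entirely: for $A\in\F(\C)\cap{}^{\perp_1}\F(\C)$ it takes a $\proj\,(\Gamma)$-cover $\varepsilon:0\to Z'\to P\to F(A)\to 0$ inside $\F(F(\C))$ (resolving), identifies $Z'\simeq F(Z)$ via (a), lifts $\varepsilon$ to $\eta:0\to Z\to Q\to A\to 0$ in $\F(\C)$ using Proposition \ref{lema Tor y lema2}(b), observes $\eta$ splits because $A\in{}^{\perp_1}\F(\C)$, and concludes $F(A)\in\proj\,(\Gamma)$. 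Both routes are valid and appeal to lemmas already established in the paper; yours is slightly more conceptual (it exhibits $F$ as reflecting the full ${}^{\perp_1}$-orthogonal), while the paper's is more hands-on and only uses the one direction of the Ext comparison that it actually needs, via the sequence-lifting in Proposition \ref{lema Tor y lema2}(b) rather than the isomorphism in Proposition \ref{prop. iso entre Ext}. The "obstacle" you flagged — checking that Proposition \ref{prop. iso entre Ext} applies with $\F(\C)$ in both slots — is in fact unproblematic, since $\F(\C)\subseteq C^M_2\subseteq C^M_1$ and $M\in{}^{\perp_1}\F(\C)$ is part of the hypotheses, and $F(\F(\C))=\F(F(\C))$ from (a) makes the translation between ${}^{\perp_1}\F(\C)$ and ${}^{\perp_1}\F(F(\C))$ work in both directions.
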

\begin{proof}
Let $\mathcal{F}(\mathcal{C})\subseteq C^{M}_2$ and recall that
${}^{\perp_1}{\C}={}^{\perp_1}{\F(\C)}$. \

(a) By Theorem \ref{teorema de Auslander} (a), we have that
$F|_{\mathcal{F}(\mathcal{C})}:\mathcal{F}(\mathcal{C})\rightarrow
F(\mathcal{F}(\mathcal{C}))$ is an equivalence of categories.
Furthermore, since $M\in{}^{\perp_1}{\F(\C)},$ it follows that
$F|_{\mathcal{F}(\mathcal{C})}$ is exact. Then, by Corollary \ref{Corol.
F(C)=mathcal{F}(F(C))}, we get that
$F(\mathcal{F}(\mathcal{C}))=\mathcal{F}(F(C))$ and this proves the
first claim in (a). The rest of the proof of (a) follows immediately
from Proposition \ref{prop. C1M-isom.epsilon} and Lemma \ref{lema cuasi-inversa de F}.
\

(b) Let $\text{\rm add}\,(M)\subseteq\mathcal{F}(\mathcal{C})$ and
let $\mathcal{F}(F(\mathcal{C}))$ be closed under kernels of
epimorphisms. By Theorem \ref{teorema de Auslander} (c), we
have that $F|_{\text{\rm add}\,(M)}:\text{\rm add}\,(M)\rightarrow
\proj\,(\Gamma)$ is an equivalence and therefore
$\proj\,(\Gamma)\subseteq\F(F(\C))$. Then, by Lemma \ref{X resolvente},
$\proj\,(\Gamma)=\F(F(\C))\cap{}^{\bot_1}\F(F(\C))$.\\
The hypotheses imply that $\add\,(M)\subseteq\F(\C)\cap
\leftidx{^{\bot_1}}\F(\C).$  Let $A\in\mathcal{F}(\mathcal{C})\cap
\leftidx{^{\bot_1}}{\mathcal{F}(\mathcal{C})}$. Then
$F(A)\in\mathcal{F}(F(\mathcal{C}))$ and, since
$\mathcal{F}(F(\mathcal{C}))$ is resolving, there exists an exact
sequence in $\mathcal{F}(F(\mathcal{C}))$
$$\varepsilon:0\rightarrow Z'\rightarrow P\rightarrow F(A)\rightarrow 0$$
 with $P\in\proj\,(\Gamma)$. By (a), we have that $Z'\simeq F(Z)$ for some $Z\in\mathcal{F}(\mathcal{C})$. Hence,
 by Proposition \ref{lema Tor y lema2} (b), there exists an exact sequence in $\mathcal{F}(\mathcal{C})$
$$\eta:0\rightarrow Z\rightarrow Q\rightarrow A\rightarrow 0$$
such that $F(\eta)\simeq\varepsilon$. Since $A\in\leftidx{^{\bot_1}}{\mathcal{F}(\mathcal{C})}$,
then $\eta$ splits, and so does $\varepsilon$. Thus $F(A)\in\proj\,(\Gamma)=F(\text{\rm add}\,(M))$. Consequently,
$A\in\text{\rm add}\,(M)$.

\end{proof}

The above results can be applied to the study of proper costratifying systems, which will be introduced in the next section.
We will show next that they can also be used to obtain,
in a unified  way, known results about Ext-projective stratifying
systems. To do so, we start with two lemmas. The first one
states a result proven in \cite{MMS}, which is
 fundamental for our considerations, and the second one is a useful technical
 lemma. To start with, we recall firstly the definition of Ext-projective
 stratifying system.

\begin{defin}(See \cite{MMS}.) \label{definicion de epps}
Let $\Lambda$ be an artin $R$-algebra. An Ext-projective
 stratifying system $(\Theta,\underline{Q},\leq),$ of size $t$ in $\modu\,(\Lambda),$ consists
of two families of non-zero $\Lambda$-modules
$\Theta=\{\Theta(i)\}_{i=1}^t$ and $\underline{Q}=\{ Q(i)\}_{i=1}^t,$
with $Q(i)$ indecomposable for all $i$, and a linear order $\leq$ on the
set $[1,t],$ satisfying the following conditions.
\begin{itemize}
 \item[(a)] $\Hom_\Lambda(\Theta(i),\Theta(j))=0$ if $i>j.$
 \item[(b)] For each $i\in[1,t],$ there is an exact sequence
$$\varepsilon_i: 0\longrightarrow K(i)\longrightarrow Q(i)\overset{\beta_i}{\longrightarrow}\Theta(i)\longrightarrow0,$$
with $K(i)\in \F(\{\Theta(j):j>i\}).$
 \item[(c)] $\underline{Q}\subseteq{}^{\perp_1}\Theta,$ that is, $\Ext_\Lambda
 ^{1}(Q(i),-)|_{\Theta}=0$ for any $i\in[1,n].$
\end{itemize}
\end{defin}

\begin{lemma} \label{lema MMS sist. estratif theta}
 Let $(\Theta,\underline{Q},\leq)$ be an Ext-projective stratifying system in $\modu\,(\Lambda)$ of size $t$. Then, for each
 $M\in\F(\{\Theta(j):j\geq i\})$, there exists an exact sequence
 in $\F(\Theta)$
$$0\rightarrow N\rightarrow Q_0(M)\rightarrow M\rightarrow0$$
such that $Q_0(M)\in\add\,(\bigoplus_{j\geq i}Q(j))$ and
$N\in\mathcal{F}(\{\Theta(j):j>i\})$. Moreover, for $Q=\oplus^{t}_{i=1}\,Q(i)$, $\F(\Theta)\subseteq C^Q_m$ for all $m\geq 1$.
\end{lemma}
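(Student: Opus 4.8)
The plan is to prove the two assertions of Lemma \ref{lema MMS sist. estratif theta} by a downward induction on $i$ (starting from $i=t$ and descending), and then to deduce the last sentence from the existence of such sequences. First I would fix $M\in\F(\{\Theta(j):j\geq i\})$ and pick a $\Theta$-filtration of $M$; the top factor, say $\Theta(k)$ for some $k\geq i$, together with condition (b) of Definition \ref{definicion de epps}, supplies the exact sequence $\varepsilon_k:0\to K(k)\to Q(k)\xrightarrow{\beta_k}\Theta(k)\to 0$ with $K(k)\in\F(\{\Theta(j):j>k\})$. Using that $\underline Q\subseteq{}^{\perp_1}\Theta$, hence $Q(k)\in{}^{\perp_1}\F(\Theta)$ (here the remark after Corollary \ref{Corol. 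F(C)=mathcal{F}(F(C))} that ${}^{\perp_1}\C={}^{\perp_1}\F(\C)$ is what one needs), the epimorphism $Q(k)\twoheadrightarrow\Theta(k)$ lifts along the epimorphism $M\twoheadrightarrow M/M'\simeq\Theta(k)$, where $M'$ is the penultimate term of the filtration; this gives a map $Q(k)\to M$, which one combines with the inclusion $M'\hookrightarrow M$ to a map $Q(k)\oplus(\text{lift of }M')\to M$. The cleanest way to organize this is to do an inner induction on the filtration length of $M$: if $M'\in\F(\{\Theta(j):j\geq i\})$ has a shorter filtration, the inductive hypothesis yields $0\to N'\to Q_0(M')\to M'\to0$ with $Q_0(M')\in\add(\bigoplus_{j\geq i}Q(j))$ and $N'\in\F(\{\Theta(j):j>i\})$; splicing this with $\varepsilon_k$ via the pullback/horseshoe construction over $M$ produces $0\to N\to Q_0(M)\to M\to 0$ with $Q_0(M)=Q_0(M')\oplus Q(k)$ and $N$ an extension of $N'$ by $K(k)$, so $N\in\F(\{\Theta(j):j>i\})$ because $K(k)\in\F(\{\Theta(j):j>k\})\subseteq\F(\{\Theta(j):j>i\})$ (as $k\geq i$) and $\F(\{\Theta(j):j>i\})$ is closed under extensions.

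The surjectivity on $\Hom$ needed to lift the projection $M\to\Theta(k)$ along $\beta_k$ is where condition (c) enters: applying $\Hom_\Lambda(Q(k),-)$ to $0\to M'\to M\to\Theta(k)\to 0$ and using $\Ext^1_\Lambda(Q(k),M')=0$ (which holds since $M'\in\F(\Theta)$ and $Q(k)\in{}^{\perp_1}\Theta={}^{\perp_1}\F(\Theta)$) shows $\Hom_\Lambda(Q(k),M)\to\Hom_\Lambda(Q(k),\Theta(k))$ is onto, so $\beta_k$ factors through $M$. Once the exact sequence $0\to N\to Q_0(M)\to M\to 0$ with $Q_0(M)\in\add(\bigoplus_{j\geq i}Q(j))$ and $N\in\F(\{\Theta(j):j>i\})$ is in hand, the membership $\F(\Theta)\subseteq C^{Q}_m$ for $Q=\bigoplus_{i=1}^t Q(i)$ follows by iteration: take $i=1$, so every $M\in\F(\Theta)$ sits in $0\to N\to Q_0(M)\to M\to 0$ with $Q_0(M)\in\add(Q)$ and $N\in\F(\Theta)$ again; repeating on $N$ builds, for each $m$, an exact sequence $Q_m\to\cdots\to Q_0\to M\to 0$ with all $Q_\ell\in\add(Q)$. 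To see this stays exact after applying $F=\Hom_\Lambda(Q,-)$, note each short sequence $0\to N_{\ell+1}\to Q_\ell\to N_\ell\to 0$ has $N_{\ell+1}\in\F(\Theta)$ and $Q\in{}^{\perp_1}\Theta={}^{\perp_1}\F(\Theta)$, so $\Ext^1_\Lambda(Q,N_{\ell+1})=0$ and $F$ preserves each of these short exact sequences; patching them gives the exactness of $F(Q_m)\to\cdots\to F(Q_0)\to F(M)\to 0$. Hence $M\in C^{Q}_m$ for every $m\geq 1$.

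The main obstacle I anticipate is purely organizational rather than conceptual: making the horseshoe splice over $M$ precise so that $Q_0(M)$ genuinely lands in $\add(\bigoplus_{j\geq i}Q(j))$ and $N$ in $\F(\{\Theta(j):j>i\})$ simultaneously, and keeping track that the lifted module of $M'$ does not introduce summands $Q(j)$ with $j<i$. This is handled by always lifting along maps into $M$ (never along auxiliary objects) and by using condition (b) only for the indices $k$ that actually occur as filtration factors of $M$, all of which satisfy $k\geq i$ by hypothesis. A minor technical point worth stating explicitly is that one should take $Q_0(M)\to M$ to be the map induced on the direct sum, verify it is onto by a diagram chase on the filtration, and then read off $N$ as its kernel; closure of $\F(\{\Theta(j):j>i\})$ under extensions (being a category of filtered modules) then finishes the bookkeeping for $N$.
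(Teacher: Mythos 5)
Your argument is correct and follows essentially the same inductive horseshoe construction that the paper delegates to the citation [MMS, Proposition 2.10]; it parallels the paper's own treatment of the analogous proper costratifying case in Proposition \ref{dim(Hom(Yi,titai))=1; sucesion exacta M', E y M}, and the deduction of $\F(\Theta)\subseteq C^{Q}_m$ from $\underline{Q}\subseteq{}^{\perp_1}\Theta={}^{\perp_1}\F(\Theta)$ matches the paper's note that the last statement follows from condition (c) of Definition \ref{definicion de epps}. Two small organizational remarks: the outer ``downward induction on $i$'' you announce is never actually invoked --- the inner induction on $\Theta$-filtration length (with $i$ held fixed) carries the whole argument --- and you peel off the \emph{top} filtration factor and resolve the submodule $M'$, whereas the paper's Proposition \ref{dim(Hom(Yi,titai))=1; sucesion exacta M', E y M} peels off the \emph{bottom} factor and resolves the quotient; both are equivalent variants of the same splicing.
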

\begin{proof} See in \cite[Proposition 2.10]{MMS}. The last statement of the lemma is proved using Definition \ref{definicion de epps} (c).
\end{proof}

For the following lemma, we consider a set $\{M_1,\cdots,M_n\}$ of
indecomposable $\Lambda$-modules which are pairwise not isomorphic,
$M=\oplus^{t}_{i=1}\,M_i,$ $\Gamma=\End_\Lambda({M})^{op}$ and
$F=\Hom_\Lambda(M,-):\modu\,(\Lambda)\rightarrow \modu\,(\Gamma).$

\begin{lemma}\label{lema ImFalpha contenida en la Traza}
Let
$M'\overset{\alpha}{\longrightarrow}M_i\overset{\beta}{\longrightarrow}X\rightarrow
0$ be an exact sequence in $\modu\,(\Lambda),$ with
$J\subseteq[1,t],$ $M'\in\add\,(\oplus_{j\in J}M_j)$ and $\beta\neq
0,$ such that the induced sequence
$$F(M')\overset{F(\alpha)}{\longrightarrow}F(M_i)\overset{F(\beta)}{\longrightarrow}F(X)\rightarrow
0$$ is exact in $\modu\,(\Gamma).$ Then the following statements
hold.
\begin{itemize}
\item [(a)] $\Ima\,(F(\alpha))\subseteq\Tr_{\oplus_{j\in J}F(M_j)}\,(\rad\,F(M_i)).$
\item [(b)] If $\Hom_\Lambda(M_j,X)=0$ for all $j\in J$, then
$$\Ima\,(F(\alpha))=\Tr_{\oplus_{j\in J}F(M_j)}\,(F(M_i)).$$
\end{itemize}
\end{lemma}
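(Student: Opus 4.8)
The plan is to analyze the image of $F(\alpha)$ by exploiting Auslander's theorem (Theorem \ref{teorema de Auslander}), which identifies $F|_{\add(M)}$ with the equivalence $\add(M)\to\proj(\Gamma)$, sending $M_j$ to the indecomposable projective $\Gamma$-module $F(M_j)$. The key observation is that a morphism $F(M')\to F(M_i)$ between projective $\Gamma$-modules, with $M'\in\add(\oplus_{j\in J}M_j)$, has image contained in the trace of $\oplus_{j\in J}F(M_j)$ in $F(M_i)$ by definition of trace; to get it inside $\rad\,F(M_i)$ for part (a), I need that $F(\alpha)$ is not a split epimorphism onto $F(M_i)$, equivalently that $F(\beta)\neq 0$, equivalently that $\beta\neq 0$. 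Indeed, if $F(\alpha)$ were onto $F(M_i)$ then $F(\beta)=0$ (by exactness of the induced sequence), forcing $\beta=0$ since $F$ is faithful on $C^M_1$ — but here I should be careful, as $X$ need not lie in $C^M_1$; instead I use part (b) of Auslander's theorem, namely that $F:\Hom_\Lambda(M_i,X)\to\Hom_\Gamma(F(M_i),F(X))$ is an isomorphism, so $F(\beta)=0$ would give $\beta=0$, contradiction. So $F(\beta)\neq 0$, hence $F(\alpha)$ is not epi, hence its image lies in $\rad\,F(M_i)$ (since $F(M_i)$ is projective indecomposable over the artin algebra $\Gamma$, any proper submodule lies in the radical). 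Combining this with the trace containment gives $\Ima F(\alpha)\subseteq \Tr_{\oplus_{j\in J}F(M_j)}(\rad\,F(M_i))$, which is part (a).

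For part (b), I would first note that $\Ima F(\alpha)\subseteq \Tr_{\oplus_{j\in J}F(M_j)}(F(M_i))$ is automatic. For the reverse inclusion, let $N=\Tr_{\oplus_{j\in J}F(M_j)}(F(M_i))$; by definition $N$ is generated by images of morphisms $F(M_j)\to F(M_i)$ for $j\in J$. By Theorem \ref{teorema de Auslander}(b), every such morphism is $F(\phi)$ for a unique $\phi:M_j\to M_i$ in $\modu(\Lambda)$. The hypothesis $\Hom_\Lambda(M_j,X)=0$ for all $j\in J$ means $\beta\phi=0$, so $\phi$ factors through $\ker\beta$. I want to conclude that $F(\phi)$ factors through $\Ima F(\alpha)=\ker F(\beta)$ (the latter equality by exactness of the induced sequence). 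Since $\phi$ factors as $M_j\to \ker\beta\hookrightarrow M_i$, and $\alpha:M'\to M_i$ has image equal to $\ker\beta$ (because $\beta\alpha=0$ and the sequence $M'\to M_i\to X\to 0$ is exact), the morphism $\phi$ factors through $\alpha$ \emph{provided} the map $M_j\to\ker\beta$ lifts along $\alpha$; but $M_j$ need not be projective, so instead I argue at the level of $\Gamma$: applying $F$, $F(\phi):F(M_j)\to F(M_i)$ satisfies $F(\beta)F(\phi)=0$, so $\Ima F(\phi)\subseteq \ker F(\beta)=\Ima F(\alpha)$ directly. Hence every generator of $N$ has image inside $\Ima F(\alpha)$, so $N\subseteq \Ima F(\alpha)$, giving equality.

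The main obstacle I anticipate is not a deep one but a bookkeeping issue: ensuring that $F(M_i)$ is genuinely an indecomposable projective $\Gamma$-module (so that "proper submodule $\Rightarrow$ contained in the radical" is valid) — this uses that the $M_i$ are pairwise non-isomorphic indecomposables, so that $F$ restricted to $\add(M)$ is an equivalence onto $\proj(\Gamma)$ matching indecomposables to indecomposables, which is exactly Theorem \ref{teorema de Auslander}(c) together with the standard fact that $\End_\Lambda(M_i)$ is local. A secondary subtlety is that in part (a), if some summand of $M'$ equals $M_i$ itself (i.e. $i\in J$), the trace $\Tr_{\oplus_{j\in J}F(M_j)}(\rad\,F(M_i))$ still makes sense and the argument that $\Ima F(\alpha)$ lands in $\rad\,F(M_i)$ still goes through because it relied only on $F(\alpha)$ not being epi, not on the decomposition of $M'$.

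\begin{proof}
By Theorem \ref{teorema de Auslander} (c), the functor $F|_{\add(M)}:\add(M)\to\proj(\Gamma)$ is an equivalence; in particular $F(M_i)$ is an indecomposable projective $\Gamma$-module for each $i$, and $F(M')\in\add(\oplus_{j\in J}F(M_j))$.

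(a) Since $F(M')\in\add(\oplus_{j\in J}F(M_j))$, the submodule $\Ima(F(\alpha))$ of $F(M_i)$ is a quotient image of a module in $\add(\oplus_{j\in J}F(M_j))$, so by definition of trace $\Ima(F(\alpha))\subseteq \Tr_{\oplus_{j\in J}F(M_j)}(F(M_i))$. We claim $\Ima(F(\alpha))\subseteq\rad\,F(M_i)$, which combined with the previous containment yields (a), since then $\Ima(F(\alpha))\subseteq \Tr_{\oplus_{j\in J}F(M_j)}(F(M_i))\cap\rad\,F(M_i)=\Tr_{\oplus_{j\in J}F(M_j)}(\rad\,F(M_i))$. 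To prove the claim, suppose $\Ima(F(\alpha))$ is not contained in $\rad\,F(M_i)$. Since $F(M_i)$ is indecomposable projective over the artin algebra $\Gamma$, $\rad\,F(M_i)$ is the unique maximal submodule, so $\Ima(F(\alpha))=F(M_i)$, i.e. $F(\alpha)$ is an epimorphism. By exactness of $F(M')\overset{F(\alpha)}{\to}F(M_i)\overset{F(\beta)}{\to}F(X)\to 0$ we get $F(\beta)=0$. But by Theorem \ref{teorema de Auslander} (b), $F:\Hom_\Lambda(M_i,X)\to\Hom_\Gamma(F(M_i),F(X))$ is an isomorphism, so $\beta=0$, contradicting the hypothesis $\beta\neq 0$. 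This proves the claim.

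(b) The inclusion $\Ima(F(\alpha))\subseteq \Tr_{\oplus_{j\in J}F(M_j)}(F(M_i))$ was shown in (a). For the reverse inclusion, let $N=\Tr_{\oplus_{j\in J}F(M_j)}(F(M_i))$, which is generated by the images of all morphisms $F(M_j)\to F(M_i)$ with $j\in J$. Fix $j\in J$ and a morphism $\psi:F(M_j)\to F(M_i)$. By Theorem \ref{teorema de Auslander} (b), $\psi=F(\phi)$ for some $\phi:M_j\to M_i$ in $\modu(\Lambda)$. Then $\beta\phi\in\Hom_\Lambda(M_j,X)=0$, so $F(\beta)\psi=F(\beta\phi)=0$, hence $\Ima(\psi)\subseteq\Ker\,F(\beta)=\Ima(F(\alpha))$, the last equality by exactness of the induced sequence. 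As this holds for all generators of $N$, we obtain $N\subseteq\Ima(F(\alpha))$, and therefore $\Ima(F(\alpha))=\Tr_{\oplus_{j\in J}F(M_j)}(F(M_i))$.
\end{proof}
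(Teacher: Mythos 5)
The paper gives essentially no proof here, just the one-line assertion that the lemma follows from Theorem \ref{teorema de Auslander}; your reconstruction supplies the details along the same lines, and part (b) is correct as written. There is, however, a genuine gap in part (a). You pass from the two containments $\Ima\,F(\alpha)\subseteq\Tr_{\oplus_{j\in J}F(M_j)}(F(M_i))$ and $\Ima\,F(\alpha)\subseteq\rad\,F(M_i)$ to the conclusion via the asserted equality
\[
\Tr_{\oplus_{j\in J}F(M_j)}(F(M_i))\cap\rad\,F(M_i)=\Tr_{\oplus_{j\in J}F(M_j)}(\rad\,F(M_i)),
\]
but this equality is false in general: only the inclusion ``$\supseteq$'' holds. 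For a counterexample, let $\Gamma$ be the path algebra of the quiver $1\to 2$, let $P$ be the indecomposable projective at vertex $1$, and take the tracing module to be $Y=P$: then $\Tr_Y(P)\cap\rad\,P=\rad\,P\neq 0$, while $\Tr_Y(\rad\,P)=0$ because $\Hom_\Gamma(P,\rad\,P)\cong e_1(\rad\,\Gamma)e_1=0$. The failure occurs exactly when $i\in J$, and this is not an idle worry, since it is precisely what happens in the paper's application of part (a) inside the proof of Theorem \ref{Teo. equivalencia}, where $J=\{j:j\leq i\}$.

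The fix is easy and in fact shortens your argument: once you know $\Ima\,F(\alpha)\subseteq\rad\,F(M_i)$, do not pass through the intersection at all. Instead observe that the corestriction of $F(\alpha)$ is a morphism $F(M')\to\rad\,F(M_i)$ whose source $F(M')$ lies in $\add\,(\oplus_{j\in J}F(M_j))$, so its image is contained in $\Tr_{\oplus_{j\in J}F(M_j)}(\rad\,F(M_i))$ directly by the definition of trace (using that the trace of a module in $N$ equals the trace of any object of its $\add$ in $N$). Everything else in your argument --- the identification of $F(M_i)$ as indecomposable projective via Theorem \ref{teorema de Auslander}(c), the use of part (b) of that theorem to convert $\beta\neq 0$ into $F(\beta)\neq 0$, the fact that any proper submodule of an indecomposable projective over an artin algebra lies in its radical, and the whole of part (b) --- is sound.
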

\begin{proof}
The proof is a straightforward consequence of Theorem \ref{teorema de Auslander}.
%
\end{proof}

We are now in a position to give a different proof of the following known result (\cite{ES}, \cite{MMS}; see also \cite{Webb} for related results).

\begin{theorem}
\label{teo. sist. estratif. Ext-proyectivos MMS}
 Let $(\Theta,\underline{Q},\leq)$ be an Ext-projective stratifying
system of size
$t$ in $\modu\,(\Lambda),$
 $Q=\oplus^{t}_{i=1}\,Q(i)$, $\Gamma=\End_\Lambda({Q})^{op}$,
 $F=\Hom_\Lambda(Q,-):\modu\,(\Lambda)\rightarrow \modu\,(\Gamma)$ and
 $G= Q\otimes_\Gamma -:\modu\,(\Gamma)\rightarrow \modu\,(\Lambda)$.
 Then, the following statements hold.
\begin{itemize}
\item[(a)] The family ${}_\Gamma P=\{F(Q(i)):i\in[1,t]\}$ is a
representative set of the indecomposable projective
$\Gamma$-modules. In particular, $\Gamma$ is a basic algebra and
$\rk\,K_0(\Gamma)=t.$
\item[(b)] $(\Gamma,\leq)$ is a standardly stratified algebra, that is, $\proj\,(\Gamma)\subseteq
\F(\leftidx{_{\Gamma}}{\Delta}).$
\item[(c)] The restriction $F|_{\F(\Theta)}:\F(\Theta)\to \F(\leftidx{_{\Gamma}}{\Delta})$ is an exact equivalence
of categories and
$G|_{\F(\leftidx{_{\Gamma}}{\Delta})}:\F(\leftidx{_{\Gamma}}{\Delta})\to
\F(\Theta)$ is a quasi-inverse of $F|_{\F(\Theta)}.$
\item[(d)] $F(\Theta(i))\simeq\leftidx{_{\Gamma}}{\Delta}(i)$, for all $i\in[1,t]$.
\item[(e)] $\text{\rm add}\,(Q)=\mathcal{F}(\Theta)\cap \leftidx{^{\bot_1}}{\mathcal{F}(\Theta)}$.
\end{itemize}
\end{theorem}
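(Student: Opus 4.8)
The plan is to verify the hypotheses of Theorem \ref{teo. equivalencia general} with $\C=\Theta$ and $M=Q=\bigoplus_{i=1}^t Q(i)$, and then harvest (a)--(e) one at a time. First I would observe that Definition \ref{definicion de epps}(c) says precisely $Q\in{}^{\perp_1}\Theta$, and Lemma \ref{lema MMS sist. estratif theta} gives $\F(\Theta)\subseteq C^Q_2$ (indeed $C^Q_m$ for all $m$) and $\add(Q)\subseteq\F(\Theta)$ (each $Q(i)$ sits in the exact sequence $\varepsilon_i$ with $K(i)\in\F(\Theta)$, so $Q(i)\in\F(\Theta)$). Thus Theorem \ref{teo. equivalencia general}(a) applies immediately and yields that $F|_{\F(\Theta)}:\F(\Theta)\to\F(F(\Theta))$ is an exact equivalence with quasi-inverse $G|_{\F(F(\Theta))}$. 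To get (c) from this I must identify $\F(F(\Theta))$ with $\F({}_\Gamma\Delta)$, which reduces to proving (d), namely $F(\Theta(i))\simeq{}_\Gamma\Delta(i)$: once the $F(\Theta(i))$ are shown to be the standard $\Gamma$-modules, $\F(F(\Theta))=\F({}_\Gamma\Delta)$ tautologically, and (c) follows, and then (b) follows since $\proj(\Gamma)=F(\add Q)\subseteq\F(F(\Theta))=\F({}_\Gamma\Delta)$ by Theorem \ref{teorema de Auslander}(c) together with $\add(Q)\subseteq\F(\Theta)$.

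For part (a), Theorem \ref{teorema de Auslander}(c) tells us $F|_{\add(Q)}:\add(Q)\to\proj(\Gamma)$ is an equivalence, so $\{F(Q(i))\}_{i=1}^t$ is a complete set of representatives of the indecomposable projective $\Gamma$-modules; since the $Q(i)$ are pairwise non-isomorphic indecomposables this set has $t$ elements and no repetitions, so $\Gamma$ is basic and $\rk K_0(\Gamma)=t$. Part (e) is exactly the conclusion of Theorem \ref{teo. equivalencia general}(b): I need $\add(Q)\subseteq\F(\Theta)$ (done above) and that $\F(F(\Theta))=\F({}_\Gamma\Delta)$ is closed under kernels of epimorphisms, which holds because $\F({}_\Gamma\Delta)$ is always closed under kernels of epimorphisms for a standardly stratified algebra — or, more self-containedly, I would deduce it a posteriori once (b) and (d) are in place, using the standard fact about $\Delta$-filtered modules; alternatively one checks directly that $\F({}_\Gamma\Delta)={}^{\perp}(\text{something})$. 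The cleanest route is: prove (d) first, then (b), then invoke the classical closure property of $\F({}_\Gamma\Delta)$ under kernels of epimorphisms, then apply Theorem \ref{teo. equivalencia general}(b) to get (e).

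So the crux is part (d): $F(\Theta(i))\simeq{}_\Gamma\Delta(i)$. Here is where Lemma \ref{lema ImFalpha contenida en la Traza} does the work. Starting from $\varepsilon_i:0\to K(i)\to Q(i)\xrightarrow{\beta_i}\Theta(i)\to 0$ with $K(i)\in\F(\{\Theta(j):j>i\})$, I would present $K(i)$ as a quotient of $\bigoplus_{j>i}Q(j)$: applying Lemma \ref{lema MMS sist. estratif theta} to $K(i)\in\F(\{\Theta(j):j>i\})$ gives an exact sequence $Q_0\to K(i)\to 0$ with $Q_0\in\add(\bigoplus_{j>i}Q(j))$ which stays exact after $F$, and composing gives an exact sequence $M'\xrightarrow{\alpha}Q(i)\xrightarrow{\beta_i}\Theta(i)\to 0$ with $M'\in\add(\bigoplus_{j>i}Q(j))$ whose image under $F$ is exact. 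Now I apply Lemma \ref{lema ImFalpha contenida en la Traza}(b) with $J=\{j:j>i\}$ and $M_i=Q(i)$: its hypothesis is $\Hom_\Lambda(Q(j),\Theta(i))=0$ for all $j>i$, which I need to establish — this follows because $Q(j)\in\F(\{\Theta(k):k\geq j\})$ (from $\varepsilon_j$ and $K(j)\in\F(\{\Theta(k):k>j\})$) and $\Hom_\Lambda(\Theta(k),\Theta(i))=0$ for $k\geq j>i$ by Definition \ref{definicion de epps}(a), so $\Hom$ out of a $\{\Theta(k):k\geq j\}$-filtered module into $\Theta(i)$ vanishes. The lemma then gives $\Ima(F(\alpha))=\Tr_{\bigoplus_{j>i}F(Q(j))}(F(Q(i)))$, hence $F(\Theta(i))\simeq F(Q(i))/\Tr_{\bigoplus_{j>i}F(Q(j))}(F(Q(i)))$, which is exactly the definition of ${}_\Gamma\Delta(i)$ since $F(Q(i))={}_\Gamma P(i)$ by (a) and the order on $[1,t]$ is transported along $F$. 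The main obstacle is this verification of the $\Hom$-vanishing needed to invoke Lemma \ref{lema ImFalpha contenida en la Traza}(b) and, slightly before it, making sure the composite sequence $M'\to Q(i)\to\Theta(i)\to 0$ really does remain exact under $F$ — that uses $F$-exactness of the $\F(\Theta)$-resolution from Lemma \ref{lema MMS sist. estratif theta}, i.e.\ the fact that $\F(\Theta)\subseteq C^Q_2$, which we have, combined with a diagram chase splicing the two $F$-exact pieces; everything else is bookkeeping.
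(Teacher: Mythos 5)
Your proposal is correct and follows essentially the same path as the paper's own proof: verify the hypotheses of Theorem \ref{teo. equivalencia general} with $\C=\Theta$, $M=Q$, produce an $F$-exact presentation $Q'\to Q(i)\to\Theta(i)\to 0$ with $Q'\in\add(\bigoplus_{j>i}Q(j))$ via Lemma \ref{lema MMS sist. estratif theta}, and use Lemma \ref{lema ImFalpha contenida en la Traza}(b) to identify $F(\Theta(i))$ with ${}_\Gamma\Delta(i)$, then read off the remaining parts from Theorem \ref{teo. equivalencia general} and the classical closure of $\F({}_\Gamma\Delta)$ under kernels of epimorphisms. The only place you differ is cosmetic: where the paper cites $\Hom_\Lambda(Q(j),\Theta(i))=0$ for $j>i$ as [MMS, Lemma 2.6(b)], you supply the short argument ($Q(j)\in\F(\{\Theta(k):k\geq j\})$ plus Definition \ref{definicion de epps}(a)), which is a correct reproof of that reference.
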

\begin{proof} (a) follows from the fact that $Q=\{Q(i)\}_{i=1}^t$ is a family of
indecomposable and pairwise not isomorphic $\Lambda$-modules (see
\cite[II Proposition 2.1]{ARS}).
\

 On the other hand, by Lemma \ref{lema MMS sist. estratif theta}
we know that $\mathcal{F}(\Theta)\subseteq C^{Q}_2$, so the
hypotheses of Theorem \ref{teo. equivalencia general} are satisfied for
$\mathcal{C}=\Theta$ and $M=Q$. Furthermore, the same lemma implies, for each $i\in[1,t],$ the existence of a
presentation
$$Q'\overset{\alpha_i}{\longrightarrow} Q(i)\overset{\beta_i}{\longrightarrow} \Theta(i)\rightarrow0$$
with $Q'\in\text{\rm add}(\bigoplus_{j>i}Q(j))$ and such that the
induced sequence
$$F(Q')\overset{F(\alpha_i)}{\longrightarrow} F(Q(i))\overset{F(\beta_i)}{\longrightarrow} F(\Theta(i))\longrightarrow0$$
is exact in $\modu\,(\Gamma).$ Since
$\Hom_\Lambda(Q(j),\Theta(i))=0$ for all $j>i$ (see \cite[Lemma
2.6 (b)]{MMS}),
 we conclude from Lemma \ref{lema ImFalpha contenida en la Traza} (b) that
 $$\Ima\,(F(\alpha_i))=\Tr_{\oplus_{j>i}\,F(Q(j))}\,F(Q(i)).$$ But, according with (a),
 the standard $\Gamma$-modules are the factors $\leftidx{_{\Gamma}}{\Delta}(i)=F(Q(i))/\Tr_{\oplus_{j>i}F(Q(j))}F(Q(i)).$
 Hence $\leftidx{_{\Gamma}}{\Delta}(i)=F(Q(i))/\Ima\,(F(\alpha_i))\simeq F(\Theta(i))$ for all $i\in[1,t]$.
Items (c) and (d) follow now from Theorem \ref{teo. equivalencia general} (a).
 \

On the other hand, since $\text{\rm
add}\,(Q)\subseteq\mathcal{F}(\Theta)$ and
$F(\mathcal{F}(\Theta))=\mathcal{F}(\leftidx{_{\Gamma}}{\Delta})$ is
closed under kernels of epimorphisms (see \cite{DR}, \cite{Xi}), we
can apply Theorem \ref{teo. equivalencia general} and obtain that (b)
and (e) hold.
\end{proof}

\section{Proper costratifying systems.}
In this section we introduce the notion of a proper costratifying system \linebreak $(\Psi,\Q,\leq)$ and illustrate it with
some examples. We also show that the notions of $\Psi$-length and $\Psi$-multiplicity are well defined.

\begin{defin}\label{def de sist. estricto}
Let $\Lambda$ be an artin $R$-algebra. A proper costratifying system
$(\Psi,\Q,\leq)$, of size $t$ in $\modu\,(\Lambda)$, consists of two
families of $\Lambda$-modules $\Psi=\{\Psi(i)\}_{i=1}^t$ and $\Q=\{
Q(i)\}_{i=1}^t,$ with $Q(i)$ indecomposable for all $i$, and a
linear order $\leq$ on the set $[1,t]$, satisfying the following
conditions.
\begin{itemize}
 \item[(a)] $\End_\Lambda(\Psi(i))$ is a division ring for all $i\in[1,t].$
 \item[(b)] $\Hom_\Lambda(\Psi(i),\Psi(j))=0$ if $i<j.$
 \item[(c)] For each $i\in[1,t],$ there is an exact sequence
$$\varepsilon_i: 0\longrightarrow Z(i)\longrightarrow Q(i)\overset{\beta_i}{\longrightarrow}\Psi(i)\longrightarrow0,$$
with $Z(i)\in \F(\{\Psi(j):j\leq i\}).$
 \item[(d)] $\Q\subseteq{}^{\perp_1}\Psi$, that is, $\Ext_\Lambda
 ^{1}(Q(i),-)|_{\Psi}=0$ for any $i\in[1,n].$
\end{itemize}
We will denote by $Q$ the $\Lambda$-module
$\bigoplus_{i=1}^{t}\,Q(i).$
\end{defin}

The notion of a \emph{proper stratifying system} is defined dually.
\begin{remark}\label{lema sistemas propios isomorfos}
\rm Let $\Lambda$ be an artin $R$-algebra and $(\Psi,\Q,\leq)$ be a proper costratifying system
of size $t$ in $\modu\,(\Lambda).$ Then:
 \begin{itemize}
 \item[(a)] For any $i\in[1,t],$ the map $\beta_i:Q(i)\to \Psi(i)$
 is a right-minimal $\add\,Q$-approximation of $\Psi(i).$ Indeed, this
 follows from the fact that $Q(i)$ is indecomposable and
 $\Q\subseteq{}^{\perp_1}\Psi={}^{\perp_1}\F(\Psi).$
 \item[(b)] Let $(\Psi',\Q',\leq)$ be another proper costratifying
system of size $t$ in $\modu\,(\Lambda)$. If $\Psi(i)\simeq\Psi'(i)$ for all $i\in[1,t],$
then there is an exact and commutative diagram in $\F(\Psi)$
$$ \begin {diagram} \dgARROWLENGTH=1.5em
\node {0} \arrow {e,l}{} \node {Z(i)} \arrow {e,l}{} \arrow
{s,r}{} \node {Q(i)} \arrow {e,l}{\beta_i} \arrow
{s,r}{} \node {\Psi(i)} \arrow {e,l}{} \arrow {s,r}{}
\node {0} \\
\node {0} \arrow {e,l}{} \node {Z'(i)} \arrow {e,l}{} \node {Q'(i)}
\arrow {e,l}{\beta'_i} \node {\Psi'(i)} \arrow {e,l}{} \node {0,}
\end {diagram}$$
where the vertical arrows are isomorphisms. This statement follows from the item (a), since $\F(\Psi)=\F(\Psi').$
 \end{itemize}
\end{remark}

\begin{example}
\rm Let $(\Theta,\underline{Q},\leq)$ be an Ext-projective stratifying
system of size $t$ in
$\modu\,(\Lambda).$ If $\End_\Lambda(\Theta(i))$ is a division ring
for all $i\in[1,t]$, then
$(\Psi=\Theta,\Q=\underline{Q},\leq^{op})$ is a proper costratifying
system of size $t.$
\end{example}
\begin{example}
\rm Let $(\Psi,\Q,\leq)$ be a proper costratifying system
of size $t$ in $\modu\,(\Lambda).$ For $i\in[1,t]$, consider the families of
$\Lambda$-modules $\Psi_i=\{\Psi(j)\,:\,j\leq i\}$ and $\Q_i=\{Q(j)\,:\,j\leq i\}.$ Then, $(\Psi_i,\Q_i,\leq)$ is a proper costratifying system
in $\modu\,(\Lambda)$, with size less or equal than $t$.
\end{example}

\begin{example}\label{canonical prop ss}
\rm Let $(\Lambda,\leq)$ be a standardly stratified algebra and
$T=\bigoplus_{i=1}^{n}T(i)$ be the characteristic tilting module. We
consider $\Q=\{T(1),\cdots,T(n)\}$ and
$\Psi={}_{\Lambda}\!\overline{\nabla}$ the proper costandard
modules. Then, by \cite[Lemma 1.2 (iii), Theorem 2.1 and Lemma 2.5 (iii)]{AHLU}, it follows that $(\Psi,\Q,\leq)$
is a proper costratifying system of size $n$ in $\modu\,(\Lambda)$. We say that
$({}_{\Lambda}\!\overline{\nabla},\{T(i)\}_{i=1}^n,\leq)$ is the
canonical proper costratifying system associated to the standardly stratified algebra
$(\Lambda,\leq).$
\end{example}

\begin{example}
\label{ejemplo 0, Q=T}
\rm The following is an example of a proper costratifying system $(\Psi,\Q,\leq)$ such that
$\Psi\neq{}_\Lambda\!\overline{\nabla}$ and $(\Lambda,\leq)$ is a standardly stratified algebra.
 Let $\Lambda$ be the path algebra of the quiver
$$\underset{1}{\circ}\longrightarrow\underset{2}{\circ}\longrightarrow\underset{3}{\circ}.$$
Consider the natural order on $\{1,2,3\}$. The proper
costandard $\Lambda$-modules can be described as follows
$${}_\Lambda\!\overline{\nabla}(1)=1, \quad {}_\Lambda\!\overline{\nabla}(2)={1 \atop 2},
\quad {}_\Lambda\!\overline{\nabla}(3)=\begin{array}{c}1 \\ 2 \\
3\end{array} .$$ Now, consider
$\Psi=\{\Psi(1)=3,\;\Psi(2)=1,\;\Psi(3)={1 \atop 2}\}$ and $$\Q=\{Q(1)=3,\;
Q(2)=1,\; Q(3)=\small\begin{array}{c}1 \\2 \\
3\end{array}\}.$$ Then $(\Psi,\Q,\leq)$ is a proper costratifying system of size $3$ in $\modu\,(\Lambda),$ which is not the canonical one.
\end{example}

\begin{example}
\label{ejemplo 1, Q distinto T} \rm The following is an example of a
proper costratifying system $(\Psi,\Q,\leq)$ such that
$\Psi\neq{}_\Lambda\!\overline{\nabla}$ and $(\Lambda,\leq)$ is not a
standardly stratified algebra.
\

 Let $\Lambda$ be given by the quiver
$$\xymatrix{
 {\circ}
       & {\circ} \ar[l]^\alpha \ar@(ur,ul)[]_{\beta}
              & {\circ} \ar[l]^\gamma }$$\vspace{-0.7cm}
$$\xymatrix{1&2&3}$$
 with the relations $\beta^{2}=0$, $\beta\alpha=0$ and $\gamma\beta=0$.
Consider the natural order $\leq$ on $\{1,2,3\}$, and the sets
$$\Psi=\{\Psi(1)=2,\;\Psi(2)=\small\begin{array}{c}3 \\2
\\1\end{array},\;\Psi(3)={2 \atop 1}\}$$ and
$$\Q=\{Q(1)=\small\begin{array}{c}2\\2\end{array},\; Q(2)=\small\begin{array}{c}3
\\2 \\1\end{array},\; Q(3)=\small\begin{array}{ccc} & 2 &  \\1 &  & 2\end{array}\}.$$ Then
$(\Psi,\Q,\leq)$ is a proper costratifying system of size $3$ in
$\modu\,(\Lambda),$ and $\Psi\neq{}_\Lambda\!\overline{\nabla}.$
\end{example}

\begin{lemma} \label{prop. basicas de sistema estricto}
Let $(\Psi,\Q,\leq)$ be a proper costratifying system of size $t$ in
$\modu\,(\Lambda).$ If $i<j$ then
 $$\Hom_\Lambda(Q(i),\Psi(j))=0=\Hom_\Lambda(Z(i),\Psi(j))\quad\text{ and }\quad\Ext_{\Lambda}^{1}(\Psi(i),\Psi(j))=0.$$
\end{lemma}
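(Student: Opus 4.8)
The plan is to prove the three vanishing statements in sequence, using the defining exact sequences $\varepsilon_i$ of the proper costratifying system together with the order conditions in Definition~\ref{def de sist. estricto}. Throughout I fix $i<j$.

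First I would handle $\Hom_\Lambda(Z(i),\Psi(j))=0$. By Definition~\ref{def de sist. estricto}(c), $Z(i)\in\F(\{\Psi(k):k\leq i\})$, so it suffices to show $\Hom_\Lambda(\Psi(k),\Psi(j))=0$ for every $k\leq i$; this follows at once from Definition~\ref{def de sist. estricto}(b), since $k\leq i<j$ forces $k<j$. A short filtration induction (applying $\Hom_\Lambda(-,\Psi(j))$ to each step of a $\{\Psi(k):k\le i\}$-filtration of $Z(i)$) then gives $\Hom_\Lambda(Z(i),\Psi(j))=0$. Next, apply $\Hom_\Lambda(-,\Psi(j))$ to the sequence $\varepsilon_i\colon 0\to Z(i)\to Q(i)\to\Psi(i)\to 0$. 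We get the exact sequence
$$0\to\Hom_\Lambda(\Psi(i),\Psi(j))\to\Hom_\Lambda(Q(i),\Psi(j))\to\Hom_\Lambda(Z(i),\Psi(j)).$$
The last term vanishes by what we just showed, and $\Hom_\Lambda(\Psi(i),\Psi(j))=0$ again by Definition~\ref{def de sist. estricto}(b) (as $i<j$). Hence $\Hom_\Lambda(Q(i),\Psi(j))=0$.

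For the Ext-vanishing, I would again apply $\Hom_\Lambda(-,\Psi(j))$ to $\varepsilon_i$ and look further along the long exact sequence:
$$\Hom_\Lambda(Z(i),\Psi(j))\to\Ext^1_\Lambda(\Psi(i),\Psi(j))\to\Ext^1_\Lambda(Q(i),\Psi(j)).$$
The left term is $0$ by the first part. The right term $\Ext^1_\Lambda(Q(i),\Psi(j))$ vanishes by Definition~\ref{def de sist. estricto}(d), which says $\Q\subseteq{}^{\perp_1}\Psi$. Therefore the middle term $\Ext^1_\Lambda(\Psi(i),\Psi(j))$ is squeezed between two zeros and must vanish.

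The only mild subtlety — hardly an obstacle — is the filtration induction needed to pass from $\Hom_\Lambda(\Psi(k),\Psi(j))=0$ for all $k\le i$ to $\Hom_\Lambda(Z(i),\Psi(j))=0$; this is the standard fact that if every $\Psi(k)$ with $k\le i$ lies in ${}^{\perp_0}\Psi(j)$ (i.e.\ $\Hom_\Lambda(-,\Psi(j))$ kills it) then so does every module filtered by such $\Psi(k)$, proved by an immediate left-exact-functor argument on the filtration. Everything else is a direct bookkeeping exercise with the long exact sequence of $\Hom_\Lambda(-,\Psi(j))$ applied to $\varepsilon_i$, combined with conditions (b) and (d) of Definition~\ref{def de sist. estricto}.
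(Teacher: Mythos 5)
Your proof is correct and follows essentially the same route as the paper: apply $\Hom_\Lambda(-,\Psi(j))$ to $\varepsilon_i$, deduce $\Hom_\Lambda(Z(i),\Psi(j))=0$ by a filtration induction using Definition~\ref{def de sist. estricto}(b), and then read off the remaining two vanishings from the resulting long exact sequence together with conditions (b) and (d). The paper simply writes the four-term exact sequence in one go (truncating with $0$ at the right because $\Ext^1_\Lambda(Q(i),\Psi(j))=0$), whereas you split the same sequence into two pieces, but the content is identical.
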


\begin{proof} Let $i<j.$ By Definition \ref{def de sist. estricto} $(c)$, there is an exact
sequence in $\F(\Psi)$
$$\varepsilon_i\;:\quad 0\longrightarrow Z(i)\longrightarrow Q(i)\longrightarrow\Psi(i)\longrightarrow0.$$
Applying the functor $\Hom_\Lambda(-,\Psi(j))$ to $\varepsilon_i,$
we get the exact sequence

\bigskip

$0\longrightarrow \Hom_\Lambda(\Psi(i),\Psi(j))\longrightarrow
\Hom_\Lambda(Q(i),\Psi(j))\longrightarrow
\Hom_\Lambda(Z(i),\Psi(j))\longrightarrow $

\medskip

$\Ext_\Lambda ^{1}(\Psi(i),\Psi(j))\longrightarrow 0.$

\bigskip

 We know that $Z(i)\in
\F(\{\Psi(\lambda):\lambda\leq i\})$ and, since $\lambda\leq i<j$,\\ $\Hom_\Lambda(\Psi(\lambda),\Psi(j))=0$ (see Definition \ref{def de sist.
estricto} (b), (c)). Then, it is easy to see that $\Hom_\Lambda(Z(i),\Psi(j))=0$. Finally, the lemma follows from the last sequence.
\end{proof}

K. Erdmann and C. Saenz proved in \cite{ES} that the filtration
multiplicity $[M:\Theta(i)]$ of $\Theta(i)$ in a $\Theta$-filtered
$\Lambda$-module $M$ is well defined, for the relative simple module
$\Theta(i)$ associated to a stratifying system $(\Theta,\leq).$ The
same result holds for the relative simple module $\Psi(i)$ of a
proper costratifying system, as we state in the following lemma.

\begin{lemma}\label{lema multiplic. filtracion; Yi no iso a Yj}
Let $(\Psi,\Q,\leq)$ be a proper costratifying system of size $t$ in
$\modu\,(\Lambda)$. Then the following statements hold.
\begin{itemize}
 \item[(a)] For any $M\in\F(\Psi)$, the filtration multiplicity $[M:\Psi(i)]_\xi$ of $\Psi(i)$ in $M$
 does not depend on the given $\Psi$-filtration $\xi$ of $M$.
 \item[(b)] $Q(i)\not\simeq Q(j) $ if $i\neq j$.
\end{itemize}
\end{lemma}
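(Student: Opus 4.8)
The plan is to prove both statements about a proper costratifying system $(\Psi,\Q,\leq)$, using the Hom-vanishing conditions established in Lemma \ref{prop. basicas de sistema estricto} together with the division-ring hypothesis on $\End_\Lambda(\Psi(i))$.

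\medskip

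For part (a), I would argue by downward induction on the order $\leq$, or equivalently by induction on the $\Psi$-length of $M$, using the standard trick for well-definedness of filtration multiplicities. Let $i$ be maximal in $[1,t]$ among the indices that actually occur as factors in a given $\Psi$-filtration $\xi$ of $M$. The key point is that, by Lemma \ref{prop. basicas de sistema estricto}, $\Hom_\Lambda(\Psi(k),\Psi(i))=0$ for $k<i$ (this is condition (b) of Definition \ref{def de sist. estricto}, and is contained in the conclusion of Lemma \ref{prop. basicas de sistema estricto}), while $\End_\Lambda(\Psi(i))$ is a division ring. A routine computation with the filtration $\xi$ then shows that $\Tr_{\Psi(i)}(M)$ is a $\Psi(i)$-homogeneous submodule whose composition-series length over $\End_\Lambda(\Psi(i))$ equals $[M:\Psi(i)]_\xi \cdot \ell(\Psi(i))$, hence $[M:\Psi(i)]_\xi$ is an invariant of $M$; passing to $M/\Tr_{\Psi(i)}(M)$, which is filtered by $\{\Psi(j):j\neq i\}$, and invoking the inductive hypothesis finishes the argument. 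The subtle point requiring care is that one needs the maximality of $i$ to guarantee that the trace behaves correctly, i.e. that morphisms $\Psi(i)\to M$ cannot interact with lower factors; this is exactly where the directionality of Definition \ref{def de sist. estricto}(b) is used.

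\medskip

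For part (b), suppose $Q(i)\simeq Q(j)$ with $i<j$. Then the right-minimal $\add Q$-approximation $\beta_j: Q(j)\to\Psi(j)$ of Remark \ref{lema sistemas propios isomorfos}(a) gives, via the isomorphism $Q(i)\simeq Q(j)$, a nonzero morphism $Q(i)\to\Psi(j)$. But $i<j$, and Lemma \ref{prop. basicas de sistema estricto} asserts $\Hom_\Lambda(Q(i),\Psi(j))=0$, a contradiction. Since $\Psi(j)\neq 0$ forces $\beta_j\neq 0$, this is immediate once part of Lemma \ref{prop. basicas de sistema estricto} is in hand.

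\medskip

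The main obstacle is entirely in part (a): carefully checking that the trace $\Tr_{\Psi(i)}(M)$ is itself $\Psi$-filtered (so that the quotient $M/\Tr_{\Psi(i)}(M)$ lies in $\F(\Psi)$ and the induction can proceed), and that its length over the division ring $\End_\Lambda(\Psi(i))$ counts $[M:\Psi(i)]_\xi$ correctly regardless of $\xi$. Both hinge on the Hom-vanishing $\Hom_\Lambda(\Psi(k),\Psi(i))=0$ for $k<i$ together with $\Ext^1_\Lambda(\Psi(k),\Psi(i))=0$ for $k<i$ (also from Lemma \ref{prop. basicas de sistema estricto}), which together ensure that a subfactor $\Psi(k)$ with $k<i$ contributes nothing to $\Tr_{\Psi(i)}(M)$. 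The rest is the standard bookkeeping for filtration multiplicities and I would not expect any genuine difficulty there.
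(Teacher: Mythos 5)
Your treatment of part (b) is fine and, in fact, a little cleaner than the paper's: the paper first establishes (a) and then argues $[Q(i):\Psi(j)]=0\ne[Q(j):\Psi(j)]$, whereas you go directly from $\Hom_\Lambda(Q(i),\Psi(j))=0$ for $i<j$ (Lemma \ref{prop. basicas de sistema estricto}) and the nonvanishing of $\beta_j$. That is a correct and independent route.

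Part (a), however, has a genuine gap, and it is not the kind that ``standard bookkeeping'' will fix. You choose $i$ \emph{maximal} among the occurring indices, invoke $\Hom_\Lambda(\Psi(k),\Psi(i))=0$ for $k<i$, and then pass to the trace $\Tr_{\Psi(i)}(M)$. But the trace is governed by morphisms \emph{from} $\Psi(i)$, i.e.\ by $\Hom_\Lambda(\Psi(i),\Psi(k))$ for the various factors $\Psi(k)$, and Definition~\ref{def de sist. estricto}(b) gives that vanishing only when $i<k$, not when $k<i$. For a proper costratifying system $\Hom_\Lambda(\Psi(i),\Psi(k))$ with $k<i$ can very well be nonzero (already in Example~\ref{ejemplo 1, Q distinto T} one has $\Hom_\Lambda(\Psi(3),\Psi(1))\ne 0$), so $\Tr_{\Psi(i)}(M)$ picks up submodules coming from lower layers and is \emph{not} $\Psi(i)$-homogeneous; consequently $M/\Tr_{\Psi(i)}(M)$ need not lie in $\F(\{\Psi(j):j\ne i\})$. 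Moreover, even after fixing the direction (taking $j$ minimal and using $\Hom_\Lambda(\Psi(j),\Psi(k))=0$ for $k>j$), the trace is still the wrong object: a nonsplit self-extension $0\to\Psi(j)\to E\to\Psi(j)\to 0$ has $\Tr_{\Psi(j)}(E)=\Psi(j)\subsetneq E$, and nothing in the proper-costratifying axioms forces $\Ext^1_\Lambda(\Psi(j),\Psi(j))=0$ (Lemma~\ref{prop. basicas de sistema estricto} only gives $\Ext^1_\Lambda(\Psi(i),\Psi(j))=0$ for $i<j$ strictly, unlike the EPSS case where the diagonal is included).

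The paper's own proof of (a) is by reference to \cite[Lemma 1.4]{ES}, ``using Lemma~\ref{prop. basicas de sistema estricto} and length instead of dimension,'' and the intended mechanism is different from a trace argument. After reordering a filtration by Lemma~\ref{reordenamiento de la tita-filtracion} so the indices are non-decreasing, one works with the full homogeneous layer: with $j$ minimal, the submodule $M_m\subseteq M$ filtered purely by $\Psi(j)$ is the \emph{unique maximal} such submodule (shown from $\Hom_\Lambda(\Psi(j),\Psi(k))=0$ for $k>j$), so it is independent of the filtration; then $[M:\Psi(j)]_\xi=\ell(M_m)/\ell(\Psi(j))$ is forced (this is the ``length instead of dimension'' step, which does not require any self-$\Ext$ vanishing), and one passes to $M/M_m\in\F(\{\Psi(k):k>j\})$ and inducts. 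Equivalently, with $i$ maximal one uses the unique minimal $N$ with $M/N$ filtered by $\Psi(i)$, controlled by the $\Hom$-vanishing you cite; in either formulation it is the entire homogeneous layer, not the trace, that carries the multiplicity.
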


\begin{proof}
(a) The proof is dual to the one given in \cite[Lemma 1.4]{ES} (see
also \cite[Lemma 2.6]{MMS}), which can be adapted by using
Lemma \ref{prop. basicas de sistema estricto} and length
instead of dimension.
 \

(b) Suppose that $Q(i)\simeq Q(j)$ and $i<j$. By (a) and Definition \ref{def de
sist. estricto}, we know that $[Q(i):\Psi(j)]=0$ and
$[Q(j):\Psi(j)]>0$, contradicting our first assumption.
\end{proof}

Given a proper costratifying system $(\Psi,\Q,\leq)$ of size $t$
in $\modu\,(\Lambda)$, the above lemma shows that the filtration multiplicity is well
defined. Thus we can define the function \emph{$\Psi$-length}
$\ell_{\Psi}:\F(\Psi)\to\N$ as follows,
$\ell_{\Psi}(M)=\sum_{i=1}^{t}[M:\Psi(i)].$ It can be seen that the
$\Psi$-length is an additive function, that is, for any exact sequence
$0\to N\to E\to M\to 0$ in $\F(\Psi),$ we have that
$\ell_{\Psi}(E)=\ell_{\Psi}(N)+\ell_{\Psi}(M).$

\begin{lemma}\label{reordenamiento de la tita-filtracion}
Let $\Psi=\{\Psi(i)\}^{t}_{i=1}$ be a family of $\Lambda$-modules
satisfying that\\ $\Ext_\Lambda^{1}(\Psi(i),\Psi(j))=0$ for $i<j$.
Then, for all $M\in \F(\Psi)$, any $\Psi$-filtration of
$M$ can be rearranged, with the same $\Psi$-composition factors, to
get a $\Psi$-filtration $0=M_0\subseteq M_1\subseteq\cdots\subseteq
M_s=M$ such that $M_i/M_{i-1}\simeq\Psi(k_i)$, with
$k_1\leq\cdots\leq k_s$.
\end{lemma}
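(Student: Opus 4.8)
Given a family $\Psi = \{\Psi(i)\}_{i=1}^t$ with $\Ext_\Lambda^1(\Psi(i),\Psi(j)) = 0$ for $i < j$, any $\Psi$-filtration of a module $M \in \F(\Psi)$ can be rearranged (keeping the same multiset of factors) into one whose factors appear in non-decreasing index order $k_1 \le \cdots \le k_s$.

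The plan is to proceed by induction on the length $s$ of the given $\Psi$-filtration, using a local swap argument reminiscent of bubble sort. For $s \le 1$ there is nothing to prove. For the inductive step, suppose $0 = M_0 \subseteq M_1 \subseteq \cdots \subseteq M_s = M$ with $M_j/M_{j-1} \simeq \Psi(i_j)$. First I would handle the bottom factor: among the indices $i_1, \dots, i_s$, pick the one with the smallest value; say $i_r$ is minimal (choosing, say, the least $r$ achieving the minimum). The goal of the first stage is to push $\Psi(i_r)$ down to the bottom of the filtration by a sequence of adjacent transpositions. The key lemma behind each transposition is this: if in a two-step piece of a filtration $N' \subseteq N'' \subseteq N'''$ we have $N''/N' \simeq \Psi(a)$ and $N'''/N'' \simeq \Psi(b)$ with $a > b$ — equivalently $b < a$, so $\Ext_\Lambda^1(\Psi(b),\Psi(a)) = 0$ — then the extension $0 \to \Psi(a) \to N'''/N' \to \Psi(b) \to 0$ splits, hence $N'''/N'$ contains a submodule $\widetilde N$ with $\widetilde N \simeq \Psi(b)$ and $(N'''/N')/\widetilde N \simeq \Psi(a)$; pulling back along $N''' \to N'''/N'$ gives a new intermediate module $\widetilde N''$ with $N' \subseteq \widetilde N'' \subseteq N'''$, $\widetilde N''/N' \simeq \Psi(b)$ and $N'''/\widetilde N'' \simeq \Psi(a)$. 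This swaps the two factors while leaving everything below $N'$ and above $N'''$ untouched, and the rest of the filtration steps are unaffected.

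Applying this transposition repeatedly, I can move $\Psi(i_r)$ from position $r$ down to position $1$: at each step it is adjacent to some factor $\Psi(i_{r-1}), \Psi(i_{r-2}), \dots$ whose index is $\ge i_r$ (strictly $>$, since $i_r$ was chosen minimal, and if equal the swap lemma's hypothesis $\Ext^1 = 0$ also holds when the larger index is below — but actually we only swap when the upper factor has strictly smaller or equal index; when indices are equal no swap is needed and we may treat them as already in order, or simply note $b \le a$ still forces $b < a$ unless $b = a$, in which case the factors commute trivially up to the ambiguity of "same composition factors"). After this first stage we obtain a new $\Psi$-filtration $0 = M_0' \subseteq M_1' \subseteq \cdots \subseteq M_s' = M$ with $M_1' \simeq \Psi(i_r)$ the smallest index, and the remaining factors $M_2'/M_1', \dots, M_s'/M_1'$ a filtration of $M/M_1'$ of length $s-1$ with the same multiset minus one copy of $i_r$. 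By the induction hypothesis applied to $M/M_1' \in \F(\Psi)$, this quotient filtration rearranges into non-decreasing order; pulling that rearrangement back to a filtration of $M$ lying above $M_1'$ (standard: submodules of $M/M_1'$ correspond to submodules of $M$ containing $M_1'$) and prepending $0 \subseteq M_1'$ gives the desired filtration, since $i_r \le$ every other index.

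The main obstacle — really the only delicate point — is verifying the transposition lemma cleanly, i.e. checking that the splitting of the length-two piece genuinely produces a submodule chain inside $M$ and not merely an abstract isomorphism of subquotients. This is handled by the standard correspondence between submodules of $N'''$ containing $N'$ and submodules of $N'''/N'$: the splitting submodule $\widetilde N \subseteq N'''/N'$ with $\widetilde N \simeq \Psi(i_{j+1})$ (the factor that was originally on top) is the image of a unique $\widetilde N'' $ with $N' \subseteq \widetilde N'' \subseteq N'''$, and one checks directly that $\widetilde N''/N' \simeq \widetilde N \simeq \Psi(i_{j+1})$ and $N'''/\widetilde N'' \simeq (N'''/N')/\widetilde N \simeq \Psi(i_j)$. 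Everything else is bookkeeping. I expect the write-up to be short: state and prove the transposition lemma, then run the bubble-sort induction.
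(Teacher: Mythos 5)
Your proof is correct and rests on exactly the same key observation as the paper's: a two-step piece $Z\subseteq Y\subseteq X$ of a filtration whose top and bottom factors satisfy the relevant $\Ext^1$-vanishing splits after passing to $X/Z$, producing the intermediate submodule $W$ needed to transpose the two factors. The paper states only this observation and leaves the sorting induction implicit; you supply that induction (selection of the minimal index, bubble it to the bottom by adjacent swaps, recurse on the quotient) explicitly, which is the natural way to finish.
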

\begin{proof}
The proof is based on the following observation. Let $Z\subseteq
Y\subseteq X$ be a chain of $\Lambda$-submodules such that
$X/Y\simeq A$ and $Y/Z\simeq B$. If $\Ext_\Lambda^{1}(A,B)=0$ then
there exists a $\Lambda$-submodule $W$ such that $Z\subseteq
W\subseteq X$ with $X/W\simeq B$ and $W/Z\simeq A$.
\end{proof}

The following result is the straightforward generalization of
\cite[Lemma 1.7]{AHLU} to the context of proper costratifying systems
$(\Psi,\Q,\leq).$ This lemma shows, in particular, that the
$\Psi(i)'s$ behave in some sense as simple objects in $\F(\Psi)$,
since non-zero morphisms into them are surjective, and it is
fundamental in all that follows.

\begin{lemma}\label{lema sobre f suryectiva y el Kerf}
Let $(\Psi,\Q,\leq)$ be a proper costratifying system of size $t$ in
$\modu\,(\Lambda)$, $X\in \F(\{\Psi(j):j\leq i\})$ and
$f\in\Hom_\Lambda(X,\Psi(i))$. If $f\neq 0$ then $f$ is surjective
and $\Ker\,(f)\in \F(\{\Psi(j):j\leq i\}).$
\end{lemma}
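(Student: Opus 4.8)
The plan is to argue by induction on the $\Psi$-length $\ell_\Psi(X)$, using Lemma \ref{reordenamiento de la tita-filtracion} to put the $\Psi$-filtration of $X$ in increasing order. Since $X\in\F(\{\Psi(j):j\leq i\})$ and $\Ext_\Lambda^1(\Psi(k),\Psi(l))=0$ for $k<l$ by Lemma \ref{prop. basicas de sistema estricto}, we may pick a $\Psi$-filtration $0=X_0\subseteq X_1\subseteq\cdots\subseteq X_s=X$ with $X_r/X_{r-1}\simeq\Psi(k_r)$ and $k_1\leq\cdots\leq k_s\leq i$. The base case $s=1$ is immediate: then $X\simeq\Psi(k_1)$ with $k_1\leq i$, and $\Hom_\Lambda(\Psi(k_1),\Psi(i))=0$ unless $k_1=i$ (Definition \ref{def de sist. estricto}(b) for $k_1<i$), so $f\neq0$ forces $X\simeq\Psi(i)$, whence $f$ is an isomorphism (using Definition \ref{def de sist. estricto}(a), that $\End_\Lambda(\Psi(i))$ is a division ring) and $\Ker f=0\in\F(\{\Psi(j):j\leq i\})$.

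For the inductive step, set $N=X_{s-1}$, so there is an exact sequence $0\to N\to X\xrightarrow{\pi}\Psi(k_s)\to 0$ with $N\in\F(\{\Psi(j):j\leq i\})$ and $k_s\leq i$. I would split into two cases according to whether $k_s<i$ or $k_s=i$. If $k_s<i$, then $\Hom_\Lambda(\Psi(k_s),\Psi(i))=0$, so $f$ factors through $\pi$ trivially only in the sense that $f|_N\neq 0$; more precisely, restricting $f$ to $N$ we get $f|_N\in\Hom_\Lambda(N,\Psi(i))$, and since $f\neq 0$ but $\Hom_\Lambda(X/N,\Psi(i))=\Hom_\Lambda(\Psi(k_s),\Psi(i))=0$, the map $f$ cannot factor through $\pi$, hence $f|_N\neq 0$; by induction $f|_N$ is surjective with $\Ker(f|_N)\in\F(\{\Psi(j):j\leq i\})$, and then $f$ itself is surjective. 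For the kernel, $\Ker f$ fits in $0\to\Ker(f|_N)\to\Ker f\to\Psi(k_s)\to 0$ (the quotient being all of $\Psi(k_s)$ precisely because $f|_N$ is already onto $\Psi(i)$), so $\Ker f\in\F(\{\Psi(j):j\leq i\})$.

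If $k_s=i$, then either $f|_N=0$ or $f|_N\neq 0$. When $f|_N\neq 0$, induction applies to $f|_N$ as before. When $f|_N=0$, $f$ factors as $f=\bar f\pi$ with $\bar f\in\Hom_\Lambda(\Psi(i),\Psi(i))$ nonzero, hence an isomorphism by Definition \ref{def de sist. estricto}(a), so $f$ is surjective with $\Ker f=N\in\F(\{\Psi(j):j\leq i\})$. In the remaining subcase ($k_s=i$, $f|_N\neq 0$), one gets $f$ surjective from the inductive surjectivity of $f|_N$, and $\Ker f$ again sits in an extension $0\to\Ker(f|_N)\to\Ker f\to\Psi(i)\to 0$ with both ends in $\F(\{\Psi(j):j\leq i\})$. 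The main obstacle I anticipate is the careful bookkeeping in the induction step: verifying in each case that the induced map on the quotient $\Psi(k_s)$ is surjective (so that $\Ker f$ really does have $\Psi(k_s)$, and not a proper submodule of it, as its top filtration factor), which is where the hypothesis that the $\Psi(j)$ behave like simple objects — no proper nonzero images in one another — is used decisively, together with $\ell_\Psi$ being additive on short exact sequences in $\F(\Psi)$.
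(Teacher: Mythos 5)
Your proof is correct and follows the route the paper indicates: induction on $\Psi$-length using the reordering lemma (Lemma \ref{reordenamiento de la tita-filtracion}), the Hom- and Ext-vanishing from Definition \ref{def de sist. estricto} and Lemma \ref{prop. basicas de sistema estricto}, and the division-ring property of $\End_\Lambda(\Psi(i))$, which is exactly the adaptation of \cite[Lemma 1.7]{AHLU} that the paper sketches. The bookkeeping you flag as the main worry (that $\Ker f$ has $\Psi(k_s)$ as its top factor) is handled correctly via $\Ker(f)/(N\cap\Ker f)\simeq(N+\Ker f)/N=X/N$, using surjectivity of $f|_N$.
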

\begin{proof}
The proof in \cite{AHLU} can be adapted directly, by using that the
$\Psi$-filtration multiplicity is well defined (see Lemma \ref{lema
multiplic. filtracion; Yi no iso a Yj}), and Lemmas \ref{prop. basicas de
sistema estricto} and \ref{reordenamiento de la tita-filtracion}.
\end{proof}

\begin{corollary}\label{Corol. f right minimal approximation}
Let $(\Psi,\Q,\leq)$ be a proper costratifying system of size $t$ in
$\modu\,(\Lambda)$. Then any non-zero map $f\in\Hom_\Lambda(Q(i),\Psi(i))$ is a right minimal
$\add\,(Q)$-approximation of $\Psi(i)$.
\end{corollary}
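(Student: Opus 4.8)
The plan is to show that any non-zero $f\in\Hom_\Lambda(Q(i),\Psi(i))$ is both a right $\add\,(Q)$-approximation of $\Psi(i)$ and right minimal. The right minimality is immediate: $Q(i)$ is indecomposable, so $\End_\Lambda(Q(i))$ is local, and any $g\in\End_\Lambda(Q(i))$ with $fg=f$ must be an automorphism, since otherwise $g\in\rad\,\End_\Lambda(Q(i))$ would make $1-g$ invertible while $f(1-g)=0$, forcing $f=0$, a contradiction. (Alternatively, this is recorded in Remark \ref{lema sistemas propios isomorfos} (a).)

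For the approximation property I would argue as follows. Fix $0\ne f\in\Hom_\Lambda(Q(i),\Psi(i))$, and let $Q'\in\add\,(Q)$ and $h\in\Hom_\Lambda(Q',\Psi(i))$ be arbitrary; I must factor $h$ through $f$. Decompose $Q'$ into indecomposable summands, each of the form $Q(j)$ for various $j$ (using Lemma \ref{lema multiplic. filtracion; Yi no iso a Yj} (b), the $Q(j)$ are pairwise non-isomorphic). It suffices to lift a map $Q(j)\to\Psi(i)$ through $f$ for each $j$. If $j>i$, then $\Hom_\Lambda(Q(j),\Psi(i))=0$ by Lemma \ref{prop. basicas de sistema estricto}, so there is nothing to do. If $j\le i$, then $Q(j)\in\F(\{\Psi(k):k\le j\})\subseteq\F(\{\Psi(k):k\le i\})$ by Definition \ref{def de sist. estricto} (c), so any map $Q(j)\to\Psi(i)$ has source in $\F(\{\Psi(k):k\le i\})$.

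Now apply Lemma \ref{lema sobre f suryectiva y el Kerf}: since $f\ne0$ and $Q(i)\in\F(\{\Psi(k):k\le i\})$, the map $f$ is surjective with $\Ker\,(f)\in\F(\{\Psi(k):k\le i\})$. The short exact sequence $0\to\Ker\,(f)\to Q(i)\overset{f}{\to}\Psi(i)\to0$ lies in $\F(\Psi)$, so applying $\Hom_\Lambda(Q(j),-)$ yields
$$\Hom_\Lambda(Q(j),Q(i))\overset{f\circ-}{\longrightarrow}\Hom_\Lambda(Q(j),\Psi(i))\longrightarrow\Ext_\Lambda^1(Q(j),\Ker\,(f)).$$
Since $\Ker\,(f)\in\F(\Psi)$ and $\Q\subseteq{}^{\perp_1}\Psi={}^{\perp_1}\F(\Psi)$ (Definition \ref{def de sist. estricto} (d)), the last term vanishes, so $f\circ-$ is surjective; hence every map $Q(j)\to\Psi(i)$ factors through $f$. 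Assembling these factorizations over the summands of $Q'$ shows $\Hom_\Lambda(Q',f)$ is surjective, so $f$ is a right $\add\,(Q)$-approximation, and combined with right minimality, a right minimal one.

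The main (and only genuine) obstacle is verifying that $\Ker\,(f)$ is $\Psi$-filtered so that the vanishing of $\Ext_\Lambda^1(Q(j),\Ker\,(f))$ can be invoked; but this is precisely the content of Lemma \ref{lema sobre f suryectiva y el Kerf}, which has already been established, so the proof is essentially a bookkeeping assembly of the preceding results.
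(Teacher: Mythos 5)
Your proof follows essentially the same route as the paper's: both hinge on Lemma~\ref{lema sobre f suryectiva y el Kerf} to produce the exact sequence $0\to\Ker(f)\to Q(i)\to\Psi(i)\to 0$ in $\F(\{\Psi(j):j\le i\})$, and then use $\Q\subseteq{}^{\perp_1}\Psi={}^{\perp_1}\F(\Psi)$ together with the indecomposability of $Q(i)$ to conclude. However, your case split contains a small slip: you dismiss the case $j>i$ by asserting $\Hom_\Lambda(Q(j),\Psi(i))=0$ via Lemma~\ref{prop. basicas de sistema estricto}, but that lemma gives $\Hom_\Lambda(Q(a),\Psi(b))=0$ precisely for $a<b$, so it yields vanishing for $j<i$, not $j>i$; for $j>i$ such Hom groups need not vanish. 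Fortunately the case distinction is unnecessary: the long exact sequence argument you run for $j\le i$ applies verbatim to every $j\in[1,t]$, since $\Ext^1_\Lambda(Q(j),\Ker(f))=0$ follows solely from $\Ker(f)\in\F(\Psi)$ and $Q(j)\in{}^{\perp_1}\F(\Psi)$, with no comparison of $j$ and $i$ required. Drop the dichotomy and the argument closes cleanly, agreeing with the paper's proof.
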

 \begin{proof} Let $0\neq f\in\Hom_\Lambda(Q(i),\Psi(i))$. Then, by Lemma \ref{lema sobre f suryectiva y el Kerf},
 we have that $0\rightarrow \Ker\,(f)\rightarrow Q(i)\overset{f}{\rightarrow}\Psi(i)\rightarrow0$ is an
 exact sequence in $\F(\{\Psi(j)\,:\,j\leq i\}).$ Furthermore, since $\Ext_\Lambda^{1}(Q,\Ker\,(f))=0$ and $Q(i)$
 is indecomposable, it follows that $f$ is a right minimal $\add\,(Q)$-approximation of $\Psi(i).$
 \end{proof}

\section{The standardly stratified algebra associated to a proper costratifying system.}

In this section we prove, for a proper costratifying system $(\Psi,\Q,\leq)$, that the pair $(\text{End}_\Lambda(Q),\leq^{op})$ is a standardly stratified algebra. Moreover, the category of  modules filtered by $\Psi$ is dual to the category of modules filtered by the proper costandard modules over $\text{End}_\Lambda(Q)$. Finally, we show that $\mathcal{F}(\Psi)$ is \linebreak coresolving precisely when $\Psi$ coincides with the costandard modules of a standardly stratified algebra.\\
The following proposition is important for our considerations, because it will allow us to apply the results in Section 2.
\begin{proposition}\label{dim(Hom(Yi,titai))=1; sucesion exacta M', E y M}
Let $(\Psi,\Q,\leq)$ be a proper costratifying system of size $t$ in
$\modu\,(\Lambda).$ Then, for each $M\in \F(\{\Psi(j)\,:\,j\leq
i\}),$ there exists an exact sequence $0\longrightarrow
M'\longrightarrow Q'\longrightarrow M\longrightarrow0$ in
$\F(\{\Psi(j)\,:\,j\leq i\})$ with $Q'\in \add\,(\bigoplus_{j\leq
i}Q(j))$. In particular $\F(\Psi)\subseteq C^Q_m$ for all $m\geq 1$.
\end{proposition}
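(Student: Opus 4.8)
The plan is to mimic the structure of Lemma \ref{lema MMS sist. estratif theta}, constructing the short exact sequence by induction on the $\Psi$-length $\ell_\Psi(M)$ and then verifying that the resulting sequence stays exact after applying $F=\Hom_\Lambda(Q,-)$.

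First I would handle the base case $\ell_\Psi(M)=0$ (so $M=0$) trivially, and set up the inductive step. Given $M\in\F(\{\Psi(j):j\leq i\})$ with $M\neq 0$, use Lemma \ref{reordenamiento de la tita-filtracion} (whose hypothesis is available by Lemma \ref{prop. basicas de sistema estricto}) to reorder a $\Psi$-filtration of $M$ so that the \emph{top} factor is $\Psi(k)$ for the largest index $k\leq i$ occurring; that is, get an exact sequence $0\to N\to M\to\Psi(k)\to 0$ with $N\in\F(\{\Psi(j):j\leq k\})$ and $\ell_\Psi(N)<\ell_\Psi(M)$. Apply the induction hypothesis to $N$ to obtain $0\to N'\to Q_N\to N\to 0$ in $\F(\{\Psi(j):j\leq k\})$ with $Q_N\in\add(\bigoplus_{j\leq k}Q(j))$, remaining exact under $F$. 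Combine this with the defining sequence $\varepsilon_k:0\to Z(k)\to Q(k)\xrightarrow{\beta_k}\Psi(k)\to 0$ from Definition \ref{def de sist. estricto}(c), where $Z(k)\in\F(\{\Psi(j):j\leq k\})$. A pullback/horseshoe-type argument produces $0\to M'\to Q'\to M\to 0$ with $Q'=Q_N\oplus Q(k)\in\add(\bigoplus_{j\leq i}Q(j))$ and $M'$ an extension of $N'$ by $Z(k)$, hence $M'\in\F(\{\Psi(j):j\leq k\})\subseteq\F(\{\Psi(j):j\leq i\})$; in particular all three terms lie in $\F(\{\Psi(j):j\leq i\})$.

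The main obstacle is showing that this sequence remains exact after applying $F$, i.e.\ that $\Ext^1_\Gamma(F(M'),-)$ obstructions vanish at the relevant spot, equivalently that $F(Q')\to F(M)$ is still surjective. The key point is that $\Q\subseteq{}^{\perp_1}\Psi={}^{\perp_1}\F(\Psi)$ (Definition \ref{def de sist. estricto}(d)), so $\Ext^1_\Lambda(Q,X)=0$ for every $X\in\F(\Psi)$; since $M'$ lies in $\F(\Psi)$, applying $\Hom_\Lambda(Q,-)=F$ to $0\to M'\to Q'\to M\to 0$ yields $0\to F(M')\to F(Q')\to F(M)\to\Ext^1_\Lambda(Q,M')=0$, so the image sequence is exact. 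This is exactly the argument indicated at the end of Lemma \ref{lema MMS sist. estratif theta} ("the last statement is proved using Definition \ref{definicion de epps}(c)").

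Finally, to deduce $\F(\Psi)\subseteq C^Q_m$ for all $m\geq 1$: for $M\in\F(\Psi)\subseteq\F(\{\Psi(j):j\leq t\})$ the sequence just constructed gives $0\to M'\to Q'\to M\to 0$ with $M'\in\F(\Psi)$ and exact under $F$; iterate, replacing $M$ by $M'$, to splice together an $\add(Q)$-resolution $\cdots\to Q'_1\to Q'_0\to M\to 0$ whose $F$-image is exact in every degree. Truncating after $m$ steps shows $M\in C^Q_m$, and since this holds for every $m$, the proof is complete.
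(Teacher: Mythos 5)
Your proof is correct and follows essentially the same strategy as the paper's: both proceed by induction on $\ell_{\Psi}(M)$ and use the hypothesis $\Q\subseteq{}^{\perp_1}\F(\Psi)$ to produce the lifting that makes the horseshoe-type step work. The concrete difference is the direction in which you peel a factor off $M$: you take a \emph{top} factor $0\to N\to M\to\Psi(k)\to 0$ and lift $\beta_k:Q(k)\to\Psi(k)$ through $M\twoheadrightarrow\Psi(k)$, whereas the paper takes a \emph{bottom} factor $0\to\Psi(i_1)\to M\to M_1\to 0$ and lifts the surjection $Q'_1\twoheadrightarrow M_1$ coming from the induction hypothesis through $M\twoheadrightarrow M_1$; these are mirror-image constructions and both are valid. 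A small remark on economy: you invoke the reordering Lemma~\ref{reordenamiento de la tita-filtracion} to make the top factor have maximal index, but this is not needed --- for the induction to close, $N\in\F(\{\Psi(j):j\leq i\})$ suffices, which holds for any $\Psi$-filtration of $M$, and indeed the paper avoids the reordering entirely by using the subobject. Your explicit argument for $F$-exactness and the splicing that yields $\F(\Psi)\subseteq C^Q_m$ for all $m\geq 1$ is a genuine addition: the paper's proof constructs the short exact sequence and leaves the ``in particular'' clause essentially implicit.
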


\begin{proof} Let $M\in \F(\{\Psi(j)\,:\,j\leq
i\}).$ We proceed by induction on $\ell_{\Psi}(M).$ For
$\ell_{\Psi}(M)=1$, we get the sequence from Definition \ref{def de sist.
estricto} (c).
\

 Let $\ell_{\Psi}(M)>1$. Then, there is an exact
sequence in $\F(\{\Psi(j)\,:\,j\leq i\})$
$$0\longrightarrow\Psi(i_1)\stackrel{\alpha}{\longrightarrow} M\stackrel{\gamma}{\longrightarrow} M_1\rightarrow 0,$$
with $\ell_{\Psi}(M_1)<\ell_{\Psi}(M).$ Hence, by induction, there
exists an exact sequence in $\F(\{\Psi(j)\,:\,j\leq i\})$
$$0\longrightarrow M'_1\longrightarrow Q'_1\stackrel{\beta}{\longrightarrow} M_1\longrightarrow 0$$
with $Q'_1\in\add\,(\bigoplus_{j\leq i}Q(j)).$ Since $Q\in
\leftidx{^{\bot_1}}{\F(\Psi)}$, there is a morphism
$\overline{\beta}:Q'_1\to M$ such that
$\beta=\gamma\overline{\beta}$. Hence we get an exact and commutative diagram
$$\begin{CD}@. 0 @. 0 @. 0\\
@.@VVV @VVV @VVV\\
0@>>> Z(i_1) @>>> X_2 @>>> M'_1 @>>> 0 \\
 @. @VVV  @VVV @VVV \\
0@>>>Q(i_1) @>{\left( 1 \atop 0\right)}>> Q(i_1)\oplus Q'_1 @>{(0,1)}>> Q'_1 @>>> 0 \\
@. @V{\beta_{i_1}}VV @V{f}VV @VV{\beta}V\\
0@>>> \Psi(i_1) @>>{\alpha}> M @>>{\gamma}> M_1 @>>> 0 \\
@.@VVV @VVV @VVV\\
@. 0 @. 0 @. 0
\end{CD}  $$
where $f=(\alpha\beta_{i_1},\, \overline{\beta})$ and
$X_2=\Ker\,(f)$. Then $X_2\in\F(\{\Psi(j)\,:\,j\leq i\})$ and the
middle vertical sequence is the desired one.
\end{proof}

\begin{corollary}\label{Corol. solo equiv. para sist. propios}
Let $(\Psi,\Q,\leq)$ be a proper costratifying system of size $t$ in
$\modu\,(\Lambda),$ $\Gamma=\End_\Lambda(Q)^{op},$
$F=\Hom_\Lambda(Q,-):\modu\,(\Lambda) \rightarrow\modu\,(\Gamma)$
and $G=Q\otimes_{\Gamma}-:\modu\,(\Gamma)
\rightarrow\modu\,(\Lambda)$. Then, the following statements hold.
 \begin{itemize}
 \item[(a)] The restriction $F|_{\F(\Psi)}:\F(\Psi)\rightarrow\F(F(\Psi))$ is an
exact equivalence of categories and
$G|_{\F(F(\Psi))}:\F(F(\Psi))\rightarrow\F(\Psi)$ is a quasi-inverse
of $F|_{\F(\Psi)}.$
 \item[(b)] If $\F(F(\Psi))$ is closed under kernels of
 epimorphisms, then $$\text{\rm add}\,(Q)=\mathcal{F}(\Psi)\cap
 \leftidx{^{\bot_1}}{\mathcal{F}(\Psi)}.$$
 \end{itemize}
\end{corollary}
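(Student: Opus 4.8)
The goal is to deduce Corollary \ref{Corol. solo equiv. para sist. propios} directly from the machinery of Section 2, once we have verified that a proper costratifying system fits the framework of Theorem \ref{teo. equivalencia general}. The key observation is that Proposition \ref{dim(Hom(Yi,titai))=1; sucesion exacta M', E y M} gives exactly the inclusion $\F(\Psi)\subseteq C^Q_m$ for all $m\geq 1$, in particular $\F(\Psi)\subseteq C^Q_2$; and Definition \ref{def de sist. estricto} (d) says precisely that $Q\in{}^{\perp_1}\Psi$, which by the remark following the definition of $\F(\C)$ equals ${}^{\perp_1}\F(\Psi)$. So the hypotheses of Theorem \ref{teo. equivalencia general} are met with $M=Q$ and $\C=\Psi$.

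\textbf{Step 1.} First I would record the two verifications above explicitly: $Q\in{}^{\perp_1}\Psi={}^{\perp_1}\F(\Psi)$ by Definition \ref{def de sist. estricto} (d) together with the identity ${}^{\perp_1}\C={}^{\perp_1}\F(\C)$ noted in the text, and $\F(\Psi)\subseteq C^Q_2$ by Proposition \ref{dim(Hom(Yi,titai))=1; sucesion exacta M', E y M}. Thus Theorem \ref{teo. equivalencia general} applies verbatim with $\Lambda$, $\C=\Psi$, $M=Q$, $\Gamma=\End_\Lambda(Q)^{op}$, $F=\Hom_\Lambda(Q,-)$ and $G=Q\otimes_\Gamma-$.

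\textbf{Step 2.} Part (a) of the corollary is then immediate: it is exactly the statement of Theorem \ref{teo. equivalencia general} (a), namely that $F|_{\F(\Psi)}:\F(\Psi)\to\F(F(\Psi))$ is an exact equivalence with quasi-inverse $G|_{\F(F(\Psi))}$.

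\textbf{Step 3.} For part (b) I would invoke Theorem \ref{teo. equivalencia general} (b), whose hypotheses are: $\add\,(Q)\subseteq\F(\Psi)$, and $\F(F(\Psi))$ closed under kernels of epimorphisms. The second is assumed in the statement of (b). For the first, $\add\,(Q)\subseteq\F(\Psi)$ because each $Q(i)$ sits in the short exact sequence $\varepsilon_i:0\to Z(i)\to Q(i)\to\Psi(i)\to 0$ of Definition \ref{def de sist. estricto} (c) with $Z(i)\in\F(\{\Psi(j):j\leq i\})\subseteq\F(\Psi)$ and $\Psi(i)\in\F(\Psi)$, so $Q(i)\in\F(\Psi)$ since $\F(\Psi)$ is closed under extensions; hence $Q=\bigoplus Q(i)\in\F(\Psi)$ and $\add\,(Q)\subseteq\F(\Psi)$. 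Theorem \ref{teo. equivalencia general} (b) then yields $\add\,(Q)=\F(\Psi)\cap{}^{\perp_1}\F(\Psi)$, which is the desired equality.

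\textbf{Main obstacle.} There is essentially no obstacle here — the corollary is designed to be a direct specialization of Theorem \ref{teo. equivalencia general}. The only point requiring a line of justification is $\add\,(Q)\subseteq\F(\Psi)$, and that follows at once from the defining exact sequences $\varepsilon_i$ and closure of $\F(\Psi)$ under extensions, as above. One should merely be careful to state the identification ${}^{\perp_1}\Psi={}^{\perp_1}\F(\Psi)$ when translating condition (d) of the definition into the hypothesis $M\in{}^{\perp_1}\C$ of Theorem \ref{teo. equivalencia general}.
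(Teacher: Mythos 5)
Your proposal is correct and follows essentially the same route as the paper: both verify $\F(\Psi)\subseteq C^Q_2$ via Proposition \ref{dim(Hom(Yi,titai))=1; sucesion exacta M', E y M}, note $Q\in{}^{\perp_1}\F(\Psi)$ from Definition \ref{def de sist. estricto}(d), check $\add\,(Q)\subseteq\F(\Psi)$ via the sequences $\varepsilon_i$, and then apply Theorem \ref{teo. equivalencia general}. The only cosmetic difference is that the paper explicitly records the indecomposability of the $Q(i)$ when concluding $\add\,(Q)\subseteq\F(\Psi)$ (so that Krull--Schmidt reduces summands of $Q^m$ to sums of the $Q(i)$), a point you leave implicit but which does not affect correctness.
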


\begin{proof} By Proposition \ref{dim(Hom(Yi,titai))=1; sucesion exacta M', E y M}, we know that
$\mathcal{F}(\Psi)\subseteq C^Q_2.$ On the other hand, since $Q(i)$ is indecomposable for each
$i$ and $\F(\Psi)$ is closed under extensions, it follows that
$\add\,(Q)\subseteq\F(\Psi).$ Hence, the hypotheses of Theorem \ref{teo.
equivalencia general} are satisfied for $\mathcal{C}=\Psi$ and
$M=Q$, and so the result follows.
\end{proof}

We will prove that the family $\{F(\Psi(i))\}_{i=1}^t$ coincides with the family of proper
standard modules over $\Gamma$. This fact and the previous
corollary will lead us to the main result of this section, which we
state in the following theorem.

\begin{theorem}\label{Teo. equivalencia}
Let $(\Psi,\Q,\leq)$ be a proper costratifying system of size $t$ in
$\modu\,(\Lambda)$, $\Gamma=\End_\Lambda(Q)^{op}$,
$F=\Hom_\Lambda(Q,-):\modu\,(\Lambda) \rightarrow \modu\,(\Gamma)$
and $G=Q\otimes_\Gamma
-:\modu\,(\Gamma)\rightarrow\modu\,(\Lambda)$. Let $\leftidx{_{\Gamma}}{\overline{\Delta}}=\{\leftidx{_{\Gamma}}{\overline{\Delta}}(i)\,:\,
i\in [1,t]\}$ be the proper standard modules corresponding to the pair $({}_\Gamma P,\leq^{op}),$ where $\leq^{op}$ is the opposite order of
$\leq$. Then, the following
statements hold.
\begin{itemize}
\item[(a)] The family ${}_\Gamma P=\{F(Q(i))\,:\,i\in[1,t]\}$ is a
representative set of the indecomposable projective
$\Gamma$-modules. In particular $\Gamma$ is a basic algebra and
$\rk\,K_0(\Gamma)=t.$
\item[(b)] The restriction $F|_{\F(\Psi)}:\F(\Psi)\to
\F(\leftidx{_{\Gamma}}{\overline{\Delta}})$ is an exact equivalence
of categories and
$G|_{\F(\leftidx{_{\Gamma}}{\overline{\Delta}})}:\F(\leftidx{_{\Gamma}}{\overline{\Delta}})\to
\F(\Psi)$ is a quasi-inverse of $F|_{\F(\Psi)}.$
\item[(c)] $F(\Psi(i))\simeq\leftidx{_{\Gamma}}{\overline{\Delta}}(i)$, for all
$i\in[1,t]$.
\item[(d)] $(\Gamma^{op},\leq^{op})$ is a standardly stratified
algebra.
\item[(e)] $\text{\rm add}\,(Q)=\mathcal{F}(\Psi)\cap \leftidx{^{\bot_1}}{\mathcal{F}(\Psi)}$.
\item[(f)] $\F(\leftidx{_{\Gamma}}{\overline{\Delta}})$ is resolving
and closed under direct summands in $\modu\,(\Gamma).$
\end{itemize}
\end{theorem}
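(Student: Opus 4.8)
The overall plan is to run the $C^{M}_2$-machinery of Section~2 with $M=Q$, so that (b), (d), (e) and (f) become formal consequences of Corollary~\ref{Corol. solo equiv. para sist. propios} and Theorem~\ref{teo. equivalencia general} once the images $F(\Psi(i))$ have been computed; the real work is part (c). Part~(a) is immediate: by Lemma~\ref{lema multiplic. filtracion; Yi no iso a Yj}~(b) the modules $Q(i)$ are indecomposable and pairwise non-isomorphic, so by Theorem~\ref{teorema de Auslander}~(c) the $F(Q(i))$ form a representative set of the indecomposable projective $\Gamma$-modules and $\Gamma$ is basic with $\rk\,K_0(\Gamma)=t$, exactly as in Theorem~\ref{teo. sist. estratif. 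Ext-proyectivos MMS}~(a); we identify $\leftidx{_{\Gamma}}{P}(i)=F(Q(i))$ from now on.

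For part~(c), fix $i$. I would first build a presentation $Q'\overset{\alpha_i}{\longrightarrow}Q(i)\overset{\beta_i}{\longrightarrow}\Psi(i)\to 0$ with $Q'\in\add\,(\bigoplus_{j\leq i}Q(j))$ that remains exact under $F$: splice the sequence $\varepsilon_i$ of Definition~\ref{def de sist. estricto}~(c) with the short exact sequence supplied by Proposition~\ref{dim(Hom(Yi,titai))=1; sucesion exacta M', E y M} for $Z(i)\in\F(\{\Psi(j):j\leq i\})$, and note that $\Ext^1_\Lambda(Q,-)$ vanishes on the $\F(\Psi)$-modules involved since $\Q\subseteq{}^{\perp_1}\Psi={}^{\perp_1}\F(\Psi)$; hence $\Ima\,(F(\alpha_i))=F(Z(i))=\Ker\,(F(\beta_i))$. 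Lemma~\ref{lema ImFalpha contenida en la Traza}~(a), applied with $J=\{j:j\leq i\}$, gives $F(Z(i))\subseteq\Tr_{\oplus_{j\leq i}F(Q(j))}\,(\rad\,F(Q(i)))$. For the reverse inclusion it suffices to see that every $g\colon F(Q(j))\to F(Q(i))$ with $j\leq i$ and $\Ima\,g\subseteq\rad\,F(Q(i))$ lands inside $F(Z(i))$; writing $g=F(h)$ via Theorem~\ref{teorema de Auslander}~(b), this means $\beta_i h=0$. For $j<i$ this holds because $\Hom_\Lambda(Q(j),\Psi(i))=0$ (Lemma~\ref{prop. basicas de sistema estricto}). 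For $j=i$ the condition $\Ima\,g\subseteq\rad\,F(Q(i))$ forces $h\in\rad\,\End_\Lambda(Q(i))$, so $\Ima\,h\subseteq\rad\,Q(i)$; if $\beta_i h\neq 0$, then $\beta_i h$ is surjective by Lemma~\ref{lema sobre f suryectiva y el Kerf} (since $Q(i)\in\F(\{\Psi(j):j\leq i\})$), whence $\rad\,Q(i)+Z(i)=Q(i)$ and so $Z(i)=Q(i)$ by Nakayama, i.e. $\Psi(i)=0$, contradicting Definition~\ref{def de sist. estricto}~(a). Therefore $F(Z(i))=\Tr_{\oplus_{j\leq i}F(Q(j))}\,(\rad\,F(Q(i)))$, and since the relation $j\geq i$ for the order $\leq^{op}$ reads as $j\leq i$, this says exactly $F(\Psi(i))=F(Q(i))/F(Z(i))\simeq\leftidx{_{\Gamma}}{\overline{\Delta}}(i)$, which is (c).

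Granting (c), everything else is bookkeeping. Corollary~\ref{Corol. solo equiv. para sist. propios}~(a) applies (its hypotheses $\F(\Psi)\subseteq C^{Q}_2$ and $\add\,(Q)\subseteq\F(\Psi)$ are already verified there), and substituting the identification of (c) turns it into (b), with $F(\F(\Psi))=\F(F(\Psi))=\F(\leftidx{_{\Gamma}}{\overline{\Delta}})$. Consequently $\proj\,(\Gamma)=F(\add\,Q)\subseteq\F(\leftidx{_{\Gamma}}{\overline{\Delta}})$, i.e. ${}_\Gamma\Gamma$ is filtered by the proper standard modules of $(\leftidx{_{\Gamma}}{P},\leq^{op})$, so Dlab's theorem (\cite{Dlab,AHLU}) yields (d): $(\Gamma^{op},\leq^{op})$ is standardly stratified. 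Part~(f) then follows from (d) and the known homological behaviour of (proper) costandard filtrations over standardly stratified algebras (\cite{ADL,AHLU}), carried over by the duality $D$ together with $\leftidx{_{\Gamma}}{\overline{\Delta}}=D({}_{\Gamma^{op}}\overline{\nabla})$; and (e) is then immediate from Corollary~\ref{Corol. solo equiv. para sist. propios}~(b), whose hypothesis that $\F(F(\Psi))$ be closed under kernels of epimorphisms is contained in (f).

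The one genuinely delicate point is the reverse trace inclusion in (c): showing that no radical endomorphism of $Q(i)$ survives $\beta_i$. This is precisely where the hypothesis that $\End_\Lambda(\Psi(i))$ is a division ring is indispensable (used through Lemma~\ref{lema sobre f suryectiva y el Kerf}); it is the analogue, in the present setting, of the multiplicity-one condition built into the proper standard modules. The remaining steps only recombine Section~2 with classical facts about standardly stratified algebras.
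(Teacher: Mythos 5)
Your overall architecture matches the paper's: part (a) is the same; part (c) is the substantive one and is proved by constructing a presentation $Q'\to Q(i)\to\Psi(i)\to 0$ with $Q'\in\add(\oplus_{j\le i}Q(j))$ preserved by $F$, then using Lemma~\ref{lema ImFalpha contenida en la Traza}~(a) for the forward trace inclusion and the vanishing $\Hom_\Lambda(Q(j),\Psi(i))=0$ for $j<i$ in the reverse; and (b), (d), (e), (f) are then deduced exactly as the paper does, via Corollary~\ref{Corol. solo equiv. para sist. propios}, $F(Q)\simeq\Gamma_\Gamma$, and the AHLU results combined with duality.

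However, the $j=i$ step of the reverse trace inclusion in your proof of (c) contains a genuine gap. From $\Ima\,g\subseteq\rad\,F(Q(i))$ you correctly deduce $h\in\rad\,\End_\Lambda(Q(i))$, but you then assert ``so $\Ima\,h\subseteq\rad\,Q(i)$'', and this implication is false in general. It holds when $Q(i)$ has simple top (i.e.\ is a local module), but a proper costratifying system imposes no such condition on $Q(i)$; an indecomposable module with non-simple top can carry a radical endomorphism $h$ whose image is not contained in the radical (concretely, the image of $\End_\Lambda(Q(i))$ in $\End_\Lambda(Q(i)/\rad Q(i))$ is a local ring but need not reduce to scalars, so it can contain nonzero nilpotents). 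Your Nakayama argument ``$\rad\,Q(i)+Z(i)=Q(i)\Rightarrow Z(i)=Q(i)$'' therefore has no starting point. The paper closes this case differently: if $\beta_i h\neq 0$, then by Corollary~\ref{Corol. f right minimal approximation} both $\beta_i h$ and $\beta_i$ are right minimal $\add(Q)$-approximations of $\Psi(i)$, forcing $h$ to be an automorphism; but then $F(h)=\imath\delta$ would make the inclusion $\imath:\rad\,F(Q(i))\hookrightarrow F(Q(i))$ a split epimorphism, hence an isomorphism, which is absurd for an indecomposable projective. You already cite the right cluster of lemmas (and correctly identify this as the place where the division-ring condition on $\End_\Lambda(\Psi(i))$ enters via Lemma~\ref{lema sobre f suryectiva y el Kerf}); replacing the Nakayama step by the minimal-approximation comparison closes the gap.
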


\begin{proof} It is well known that the functor $F:\modu\,(\Lambda) \rightarrow
\modu\,(\Gamma)$ induces, by restriction, an equivalence from
$\add\,(Q)$ to  $\proj\,(\Gamma)$ (see \cite[II Proposition 2.1]{ARS}).
\

(a) Since $Q=\{Q(i)\}_{i=1}^t$ is a family of indecomposable and
pairwise not isomorphic $\Lambda$-modules (see Lemma \ref{lema multiplic.
filtracion; Yi no iso a Yj} (b)), we get (a) from the above observation.
\

(b) and (c): From Corollary \ref{Corol. solo equiv. para sist. propios}, we
know that the restriction
$F|_{\F(\Psi)}:\F(\Psi)\rightarrow\F(F(\Psi))$ is an exact
equivalence of categories and
$G|_{\F(F(\Psi))}:\F(F(\Psi))\rightarrow\F(\Psi)$ is a quasi-inverse
of $F|_{\F(\Psi)}.$ So, to get (b) and (c), it is enough to prove
that $$F(\Psi(i))\simeq {}_{\Gamma}\overline{\Delta}(i)={}_\Gamma
 P(i)/\Tr_{\bigoplus_{j\geq^{op}i}\,{}_\Gamma P(j)}\,(\rad\,{}_\Gamma
 P(i)),\,
\text{\rm  for all } i\in [1,t].$$ Let $i\in[1,t]$. Then, from Definition \ref{def de sist.
estricto} (c), we have an exact sequence
$$0\rightarrow Z(i)\overset{\alpha_i}{\longrightarrow} Q(i)\overset{\beta_i}{\longrightarrow}\Psi(i)\rightarrow 0,$$
where $Z(i)\in\F(\{\Psi(j)\,:\,j\leq i\}).$ Hence, by Proposition
\ref{dim(Hom(Yi,titai))=1; sucesion exacta M', E y M}, we get an
exact sequence
$$0\longrightarrow\Ker\,(t)\longrightarrow Q'\stackrel{t}{\longrightarrow}Z(i)\longrightarrow
0$$ in $\F(\{\Psi(j)\,:\,j\leq i\}),$ where
$Q'\in\add\,(\bigoplus_{j\leq i}Q(j)).$ Therefore, since $F$ is
exact on $\mathcal{F}(\Psi),$ we have a presentation
$Q'\overset{\alpha_i\,t}{\longrightarrow}
Q(i)\overset{\beta_i}{\longrightarrow}\Psi(i)\rightarrow 0$
such that $F(Q')\overset{F(\alpha_i\,t)}{\longrightarrow}
F(Q(i))\overset{F(\beta_i)}{\longrightarrow}F(\Psi(i))\rightarrow 0$
is exact in $\modu\,(\Gamma).$ So, applying Lemma \ref{lema ImFalpha
contenida en la Traza} (a), it follows that
$$\Ima\,(F(\alpha_i))=\Ima\,(F(\alpha_i\,t))\subseteq\Tr_{\bigoplus_{j\leq
i}F(Q(j))}\,(\rad\,F(Q(i))).$$ So, in order to prove that
$F(\Psi(i))\simeq\leftidx{_{\Gamma}}{\overline{\Delta}}(i),$ it is
enough to show the inclusion $\Tr_{\bigoplus_{j\leq
i}F(Q(j))}\,(\rad\,F(Q(i)))\subseteq\Ima\,(F(\alpha_i)).$ To prove
such inclusion, we assume that $j\leq i$ and consider a morphism
$\delta:F(Q(j))\rightarrow\rad\,F(Q(i))$. Let
$\imath:\rad\,F(Q(i))\rightarrow F(Q(i))$ be the inclusion map,
which is not an isomorphism since $F(Q(i))\in\proj\,(\Gamma)$ is
indecomposable (see (a)). Furthermore, from the equivalence
$F|_{\add\,(Q)}:\add\,(Q)\rightarrow\proj\,(\Gamma),$ there is a
morphism $\eta:Q(j)\rightarrow Q(i)$ such that
$\imath\delta=F(\eta)$. Hence
$\Ima\,(\delta)\subseteq\Ima\,(F(\eta)).$ We assert that
$\Ima\,(F(\eta))\subseteq \Ima\,(F(\alpha_i))$ and, from this, it follows that
$\Ima\,(\delta)\subseteq\Ima\,(F(\alpha_i)),$ proving that
$\Tr_{\bigoplus_{j\leq
i}F(Q(j))}\,(\rad\,F(Q(i)))\subseteq\Ima\,(F(\alpha_i)).$ So, to
prove that $\Ima\,(F(\eta))\subseteq \Ima\,(F(\alpha_i)),$ we need
to show that  $F(\beta_i)F(\eta)=0$ since we have the following
exact sequence
$$0\longrightarrow\Ima\,(F(\alpha_i))\longrightarrow F(Q(i))\stackrel{F(\beta_i)}{\longrightarrow}F(\Psi(i))\longrightarrow
0.$$ Thus, we only need to prove that the composition
$Q(j)\stackrel{\eta}{\rightarrow}Q(i)\stackrel{\beta_i}{\rightarrow}\Psi(i)$
is zero. If $j<i$ this is true since, by Lemma \ref{prop. basicas de
sistema estricto}, we know that $\Hom_{\Lambda}(Q(j),\Psi(i))=0.$\\
Let $i=j$ and suppose that $\beta_i\eta\neq 0$. By Corollary \ref{Corol. f
right minimal approximation}, we know that
$\beta_i\eta:Q(i)\rightarrow \Psi(i)$ and $\beta_i:Q(i)\rightarrow
\Psi(i)$ are both minimal right $\add\,(Q)$-approximations of
$\Psi(i).$ Thus, from the commutative diagram
$$ \begin {diagram} \dgARROWLENGTH=1.5em
\node {Q(i)} \arrow {e,l}{\beta_i\eta} \arrow {s,r}{\eta}{}
\node {\Psi(i)} \arrow {s,r,=}{}\\
\node {Q(i)} \arrow {e,r}{\beta_i} \node {\Psi(i)}
\end{diagram}$$
we get that $\eta$ is an isomorphism. Therefore
$F(\eta)=\imath\delta$ is also an isomorphism, contradicting that
the inclusion map $\imath:\rad\,F(Q(i))\rightarrow F(Q(i))$ is not
an isomorphism, and therefore $\beta_i\eta=0$ as desired.
\

(d) The fact that $(\Gamma^{op},\leq^{op})$ is a standardly stratified algebra is equivalent to the condition
 ${}_\Gamma\Gamma\in\F(\leftidx{_{\Gamma}}{\overline{\Delta}})$ (see \cite[2.2]{Dlab},
\cite[2.2]{ADL} or \cite{L} ). It is easy to check the last claim. In fact,
$Q\in \mathcal{F}(\Psi)$ and so $\Gamma_{\Gamma}\simeq F(Q)\in
\mathcal{F}(\leftidx{_{\Gamma}}{\overline{\Delta}}).$
\

(e) and (f): Since $(\Gamma^{op},\leq^{op})$ is a standardly
stratified algebra (see (d)), it follows from \cite[Theorem 1.6
(ii)]{AHLU} that $\F({}_{\Gamma^{op}}\!\overline{\nabla})$ is
coresolving. We get by duality
that $\F(\leftidx{_{\Gamma}}{\overline{\Delta}})$ is resolving.
On the other hand, from (b), we know that
$\F(F(\Psi))=\F(\leftidx{_{\Gamma}}{\overline{\Delta}}).$ Hence, (e) follows from Corollary \ref{Corol. solo equiv. para sist. propios}.
Finally, we prove that
$\F(\leftidx{_{\Gamma}}{\overline{\Delta}})$ is closed under direct
summands in $\modu\,(\Gamma).$ Indeed, we have that
$D_{\Gamma}(\mathcal{F}(\leftidx{_{\Gamma}}{\overline{\Delta}}))=
\mathcal{F}(\leftidx{_{{\Gamma}^{op}}}{\overline{\nabla}})=
\mathcal{F}(\leftidx{_{{\Gamma}^{op}}}{\Delta})^{\bot_1}$ (the
last equality follows from \cite[Theorem 1.6 (iv)]{AHLU}), and so
the result follows observing that
$\mathcal{F}(\leftidx{_{{\Gamma}^{op}}}{\Delta})^{\bot_1}$ is closed
under direct summands in $\modu\,(\Gamma^{op})$.
\end{proof}

\begin{remark}
\rm We recall that an algebra $\Lambda$ is properly stratified if and only if $\leftidx{_{\Lambda}}{\Lambda}\in\F({}_{\Lambda}{\Delta})\cap\F({}_{\Lambda}{\overline{\Delta}})$ (see \cite{Dlab2}). In this case, the standard modules provide a stratifying system, and the proper standard modules a proper stratifying system.
\end{remark}

\begin{example}
\rm Let $({}_{\Lambda}\!\overline{\nabla},\{T(i)\}_{i=1}^n,\leq)$ be the
canonical proper costratifying system associated to the standardly stratified algebra
$(\Lambda,\leq)$ (see Example \ref{canonical prop ss}). Then, by Theorem \ref{Teo. equivalencia} (d), $\Gamma^{op}=\End_\Lambda(T)$ is the \lq Ringel dual\rq \,  of $\Lambda$.
\end{example}

\begin{example}\label{Observ. continua ejemplo 0}
\rm Let $(\Psi,\Q,\leq)$ be the proper costratifying system considered
in Example \ref{ejemplo 0, Q=T}. In this case, the algebra
$\Gamma^{op}=\End_\Lambda(Q)$ is given by the quiver
$$\underset{1}{\circ}\overset{\varepsilon}{\longrightarrow}\underset{3}{\circ}\overset{\mu}{\longrightarrow}\underset{2}{\circ}.$$
with the relation $\mu\varepsilon=0$. Then
$$\leftidx{_{\Gamma^{op}}}{\Gamma^{op}}=\small\begin{array}{c}
                                          1 \\
                                          3
                                        \end{array}\oplus 2\oplus\begin{array}{c}
                                                                   3 \\
                                                                   2
                                                                 \end{array}
$$
We consider $(\Gamma^{op},\leq^{op})$, where
$3\leq^{op}2\leq^{op}1.$ Then the corresponding standard modules are
$\leftidx{_{\Gamma^{op}}}{\Delta}=\{\leftidx{_{\Gamma^{op}}}{\Delta}(1)=\small\begin{array}{c}1 \\
 3\end{array}, \leftidx{_{\Gamma^{op}}}{\Delta}(2)=2, \leftidx{_{\Gamma^{op}}}{\Delta}(3)=3\}.$ In this case,
 it is easy to check directly that ${}_{\Gamma^{op}}\Gamma^{op}\in\F(\leftidx{_{\Gamma^{op}}}{\Delta})$. That is,
 $(\Gamma^{op},\leq^{op})$ is a standardly stratified algebra.
\end{example}

\begin{proposition}\label{proposicion Ext isomorfismo}
Let $(\Psi,\Q,\leq)$ be a proper costratifying system of size $t$ in
$\modu\,(\Lambda)$, $\Gamma=\End_\Lambda(Q)^{op}$ and
$F=\Hom_\Lambda(Q,-):\modu\,(\Lambda) \rightarrow \modu\,(\Gamma)$.
If $X,Y\in \mathcal{F}(\Psi)$ then the map $\rho_{X,Y}:\text{\rm
Ext}_\Lambda^{1}(X,Y)\rightarrow \text{\rm
Ext}_\Gamma^{1}(F(X),F(Y)),$ induced by $F,$ is an isomorphism.
\end{proposition}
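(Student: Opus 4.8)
The plan is to deduce this statement directly from Proposition \ref{prop. iso entre Ext}, applied with $M=Q$ and $\Y=\F(\Psi)$. All the hypotheses needed there are already recorded in the paper, so the argument amounts to assembling them; there is essentially no real obstacle, only a couple of small verifications.

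First I would check that $\F(\Psi)\subseteq C^{Q}_1$: indeed, by Proposition \ref{dim(Hom(Yi,titai))=1; sucesion exacta M', E y M} we have $\F(\Psi)\subseteq C^{Q}_m$ for all $m\geq 1$, so in particular both $\F(\Psi)\subseteq C^{Q}_1$ and $\F(\Psi)\subseteq C^{Q}_2$ hold. Next I would verify that $Q\in{}^{\perp_1}\F(\Psi)$. By Definition \ref{def de sist. estricto}(d) we have $\Q\subseteq{}^{\perp_1}\Psi$, that is, $\Ext_\Lambda^{1}(Q(i),-)|_{\Psi}=0$ for all $i\in[1,t]$; since $Q=\bigoplus_{i=1}^{t}Q(i)$, this gives $Q\in{}^{\perp_1}\Psi$. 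Using the identity ${}^{\perp_1}\Psi={}^{\perp_1}\F(\Psi)$ observed right after the definition of $\F(\C)$ in Section 2, we conclude $Q\in{}^{\perp_1}\F(\Psi)$.

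With these two facts the hypotheses of Proposition \ref{prop. iso entre Ext} are satisfied for $M=Q$ and $\Y=\F(\Psi)$, so for every $X\in C^{Q}_2$ and every $Y\in\F(\Psi)$ the map $\rho_{X,Y}:\Ext_\Lambda^{1}(X,Y)\to\Ext_\Gamma^{1}(F(X),F(Y))$ is an isomorphism of $R$-modules. Since $X,Y\in\F(\Psi)\subseteq C^{Q}_2$, this is exactly the claim. (Alternatively, one could argue via Corollary \ref{Corol. solo equiv. para sist. propios}(a): $F|_{\F(\Psi)}$ is an exact equivalence onto $\F(F(\Psi))$, which is extension-closed in $\modu\,(\Gamma)$, so $\Ext^1$ is preserved; but the route through Proposition \ref{prop. iso entre Ext} is the most economical, as it already packages the needed diagram chase.) The only point deserving a word of care is the use of ${}^{\perp_1}\Psi={}^{\perp_1}\F(\Psi)$ together with $\F(\Psi)\subseteq C^{Q}_2$, both of which are available from Section 2 and Proposition \ref{dim(Hom(Yi,titai))=1; sucesion exacta M', E y M}.
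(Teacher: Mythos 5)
Your proposal is correct and follows exactly the paper's route: Proposition \ref{prop. iso entre Ext} applied with $M=Q$ and $\Y=\F(\Psi)$, using Proposition \ref{dim(Hom(Yi,titai))=1; sucesion exacta M', E y M} for $\F(\Psi)\subseteq C^{Q}_m$ and Definition \ref{def de sist. estricto}(d) plus ${}^{\perp_1}\Psi={}^{\perp_1}\F(\Psi)$ for $Q\in{}^{\perp_1}\F(\Psi)$. You spell out the verification of hypotheses a bit more explicitly than the paper does, but it is the same argument.
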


\begin{proof}
The result is a direct consequence of Proposition \ref{prop. iso entre Ext} applied to $\mathcal{X}=\mathcal{F}(\Psi)$ and
 $M=Q,$ since Proposition \ref{dim(Hom(Yi,titai))=1; sucesion exacta M', E y M} shows
 that $\mathcal{F}(\Psi)\subseteq C^Q_m$ for any $m\geq 1.$
\end{proof}

Let $\C$ be a class of $\Lambda$-modules such that
$\add\,(Q)=\F(\C)\cap{}^{\perp_1}\F(\C)$ for some $\Lambda$-module
$Q.$ Let $M\in\mathcal{F}(\C).$ We recall that a
$\C$-\emph{projective cover} of $M,$ is a surjective morphism
$f:Q_M\rightarrow M$ of $\Lambda$-modules such that
$Q_M\in\add\,(Q),$ $\Ker\,(f)\in\F(\C)$ and $f$ is a right minimal
$\add\,(Q)$-approximation of $M.$

\begin{proposition} Let $(\Psi,\Q,\leq)$ be a proper costratifying system of size $t$ in $\modu\,(\Lambda).$
Then $\F(\Psi)$ is closed under direct summands in
$\modu\,(\Lambda),$ and any object in $\F(\Psi)$ admits a
 $\Psi$-projective cover.
\end{proposition}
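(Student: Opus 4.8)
The plan is to read off both assertions from the equivalence established in Theorem~\ref{Teo. equivalencia}. Put $\Gamma=\End_\Lambda(Q)^{op}$, $F=\Hom_\Lambda(Q,-)\colon\modu\,(\Lambda)\to\modu\,(\Gamma)$ and $G=Q\otimes_\Gamma-$. By that theorem, $F$ restricts to an exact equivalence $\F(\Psi)\to\F(F(\Psi))=\F({}_\Gamma\overline{\Delta})$ with quasi-inverse $G$, the category $\F(F(\Psi))$ is closed under direct summands in $\modu\,(\Gamma)$ (item (f)), and $\add\,(Q)=\F(\Psi)\cap{}^{\perp_1}\F(\Psi)$ (item (e)); this last equality is what makes the notion of a $\Psi$-projective cover meaningful here. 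Finally, $\F(\Psi)\subseteq C^Q_1$ by Proposition~\ref{dim(Hom(Yi,titai))=1; sucesion exacta M', E y M}, so the counit $\epsilon_X\colon GF(X)\to X$ is an isomorphism for every $X\in\F(\Psi)$ by Proposition~\ref{prop. C1M-isom.epsilon}.

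For the first assertion, let $X=X_1\oplus X_2\in\F(\Psi)$, with inclusion $\iota\colon X_1\to X$ and projection $\pi\colon X\to X_1$ satisfying $\pi\iota=1_{X_1}$. Since $F$ is exact on $\F(\Psi)$, the module $F(X)=F(X_1)\oplus F(X_2)$ lies in $\F(F(\Psi))$; as this category is closed under direct summands we get $F(X_1)\in\F(F(\Psi))$, and applying the quasi-inverse $G$ yields $GF(X_1)\in\F(\Psi)$. It remains to see that $\epsilon_{X_1}\colon GF(X_1)\to X_1$ is an isomorphism. Naturality of $\epsilon$ gives $\iota\,\epsilon_{X_1}=\epsilon_X\,GF(\iota)$ and $\epsilon_{X_1}\,GF(\pi)=\pi\,\epsilon_X$. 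From $GF(\pi)\,GF(\iota)=1$, the morphism $GF(\iota)$ is a split monomorphism, hence so is $\epsilon_X\,GF(\iota)$ (as $\epsilon_X$ is an isomorphism), and therefore $\epsilon_{X_1}$ is a monomorphism; similarly $\pi\,\epsilon_X$, and hence $\epsilon_{X_1}\,GF(\pi)$, is an epimorphism, so $\epsilon_{X_1}$ is an epimorphism. Thus $\epsilon_{X_1}$ is an isomorphism and $X_1\simeq GF(X_1)\in\F(\Psi)$.

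For the second assertion, fix $M\in\F(\Psi)$. Proposition~\ref{dim(Hom(Yi,titai))=1; sucesion exacta M', E y M} (applied with $i=t$) provides an exact sequence $0\to M'\to Q'\xrightarrow{\,g\,}M\to0$ in $\F(\Psi)$ with $Q'\in\add\,(Q)$. Since $Q\in{}^{\perp_1}\Psi={}^{\perp_1}\F(\Psi)$ and $M'\in\F(\Psi)$, we have $\Ext_\Lambda^{1}(Q,M')=0$, so applying $\Hom_\Lambda(Q,-)$ to this sequence shows that $\Hom_\Lambda(Q,Q')\to\Hom_\Lambda(Q,M)$ is surjective; hence $g$ is a right $\add\,(Q)$-approximation of $M$. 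Using the Krull--Schmidt property, write $Q'=Q_1\oplus Q_2$ so that $f:=g|_{Q_1}\colon Q_1\to M$ is a right minimal $\add\,(Q)$-approximation of $M$ and $g|_{Q_2}=0$; then $f$ is still an epimorphism and $\Ker\,(g)\simeq\Ker\,(f)\oplus Q_2$, so $\Ker\,(f)$ is a direct summand of $\Ker\,(g)=M'\in\F(\Psi)$ and therefore lies in $\F(\Psi)$ by the first assertion. Consequently $f\colon Q_1\to M$ is a $\Psi$-projective cover of $M$.

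The one point that requires care is the first step: closure under direct summands is not preserved by an arbitrary equivalence of categories, so it cannot simply be transported along $F|_{\F(\Psi)}$. What makes it work is the inclusion $\F(\Psi)\subseteq C^Q_1$, which forces $\epsilon_X$ to be an isomorphism and so lets one recover $X_1$ inside $\modu\,(\Lambda)$ from the summand $F(X_1)$ on the $\Gamma$-side; everything else is routine.
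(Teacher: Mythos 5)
Your proposal is correct, and the second half (existence of $\Psi$-projective covers) follows essentially the same route as the paper: take the sequence from Proposition~\ref{dim(Hom(Yi,titai))=1; sucesion exacta M', E y M}, observe that $\Ext^1_\Lambda(Q,M')=0$ makes it a right $\add\,(Q)$-approximation, pass to the right minimal version, and use closure under direct summands to keep the kernel inside $\F(\Psi)$.

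For the first half (closure under direct summands) you take a genuinely different route. The paper decomposes $M=\bigoplus M_i$ and $F(M)=\bigoplus X_j$ into indecomposables, notes that each $X_j$ stays in $\F({}_\Gamma\overline{\Delta})$ since that category is closed under direct summands, applies $G$ (which, being fully faithful, preserves indecomposability), and invokes the Krull--Schmidt theorem to conclude that each $M_i$ is isomorphic to some $G(X_j)\in\F(\Psi)$. You instead fix an arbitrary direct-sum decomposition $X=X_1\oplus X_2$, show $F(X_1)\in\F(F(\Psi))$ and hence $GF(X_1)\in\F(\Psi)$, and then prove directly from naturality of the counit and the known isomorphism $\epsilon_X$ that $\epsilon_{X_1}$ is also an isomorphism. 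Both routes hinge on the same two facts --- closure of $\F({}_\Gamma\overline{\Delta})$ under direct summands and the counit being an isomorphism on $\F(\Psi)\subseteq C^Q_1$ --- but yours avoids the Krull--Schmidt decomposition entirely, making the argument a bit more elementary and self-contained; the paper's version is shorter because it leans on a standard Krull--Schmidt matching that readers in this area will find familiar. Your closing remark correctly identifies the point of care: one cannot simply ``transport'' closure under summands along an equivalence of full subcategories of two different module categories, and it is precisely the counit isomorphism that bridges the two sides.
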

\begin{proof} Recall that we have the exact equivalence
$F=\Hom_\Lambda(Q,-):\F(\Psi)\rightarrow
\F(\leftidx{_{\Gamma}}{\overline{\Delta}}),$ and also that
$\F(\leftidx{_{\Gamma}}{\overline{\Delta}})$ is closed under direct
summands in $\modu\,(\Gamma)$ (see Theorem \ref{Teo. equivalencia}).
We will carry this property back to $\mathcal{F}(\Psi).$ In fact, let $G:
\mathcal{F}(\leftidx{_{\Gamma}}{\overline{\Delta}})\rightarrow\mathcal{F}(\Psi)$
be a quasi-inverse of $F$ and $M\in \mathcal{F}(\Psi)$, and let
$M=\bigoplus_{i=1}^{n}M_i$ and $F(M)=\bigoplus_{j=1}^{m}X_j$ with $M_i$ and $X_j$ indecomposable modules for all $i,j$.
Since $\mathcal{F}(\leftidx{_{\Gamma}}{\overline{\Delta}})$ is
closed under direct summands, then $X_j$ belongs to it.
 \

We have $M\simeq GF(M)\simeq \bigoplus_{j=1}^{m}G(X_j)$. Since $G$ is
faithful and full, $G$ preserves indecomposables. Therefore, it follows
from Krull-Schmidt Theorem that $M_i\simeq G(X_{i_j})$ for some $i_j$, proving that $M_i\in \mathcal{F}(\Psi)$, as desired.
 \

We prove next that $\F(\Psi)$ admits $\Psi$-projective covers.
Indeed, by Proposition \ref{dim(Hom(Yi,titai))=1; sucesion exacta M', E y M} we
know the existence of an exact sequence in $\mathcal{F}(\Psi)$
$$0\rightarrow M'\rightarrow Q'\overset{f'}{\longrightarrow} M\rightarrow0,$$
where $f'$ is a right $\add\,(Q)$-approximation of $M$. Therefore,
since $\mathcal{F}(\Psi)$ is closed under direct summands, we get
that the right minimal version $f:Q_M\rightarrow M$ of
$f'$ is the desired $\Psi$-projective cover.
\end{proof}

The following proposition gives sufficient conditions for $F(D(\Lambda_{\Lambda}))$ to be a cotilting $\Gamma$-module.

\begin{proposition}\label{prop. add de un cotilting}
Let $(\Psi,\Q,\leq)$ be a proper costratifying system of size $t$ in
$\modu\,(\Lambda),$
$\Gamma=\End_\Lambda(Q)^{op},$
$F=\Hom_\Lambda(Q,-):\modu\,(\Lambda) \rightarrow \modu\,(\Gamma)$
and $T=F(D(\Lambda_{\Lambda}))$. Let
$\leftidx{_{\Gamma}}{\overline{\Delta}}$ be the family of proper
standard modules. If
$D(\Lambda_{\Lambda})\in \mathcal{F}(\Psi)$ and
$t=\rk\,K_0(\Lambda),$ then the following statements hold.
 \begin{itemize}
  \item[(a)] $T$ is a cotilting $\Gamma$-module.
  \item[(b)] $\mathcal{F}(\leftidx{_{\Gamma}}{\overline{\Delta}})={}^\perp T$
and $\F(\leftidx{_{\Gamma}}{\overline{\Delta}})\cap
{\mathcal{F}(\leftidx{_{\Gamma}}{\overline{\Delta}})}^{\bot_1}=\add\,(T).$
 \end{itemize}
\end{proposition}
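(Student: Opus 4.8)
The plan is to identify $T=F(D(\Lambda_{\Lambda}))$, up to additive equivalence, with the characteristic cotilting $\Gamma$-module produced by the fact that $(\Gamma^{op},\leq^{op})$ is standardly stratified, and then to read (a) and (b) off the standard properties of that module. First I would note that, since $D(\Lambda_{\Lambda})\in\mathcal{F}(\Psi)$ and $\mathcal{F}(\Psi)$ is closed under direct summands, each indecomposable injective $\Lambda$-module belongs to $\mathcal{F}(\Psi)$; as $F|_{\mathcal{F}(\Psi)}\colon\mathcal{F}(\Psi)\to\mathcal{F}(\leftidx{_{\Gamma}}{\overline{\Delta}})$ is an equivalence of categories (Theorem \ref{Teo. equivalencia}(b)) and $F$ is additive, $T$ lies in $\mathcal{F}(\leftidx{_{\Gamma}}{\overline{\Delta}})$ and has exactly $t=\rk\,K_0(\Lambda)$ pairwise non-isomorphic indecomposable direct summands. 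Secondly, $T$ is Ext-injective in $\mathcal{F}(\leftidx{_{\Gamma}}{\overline{\Delta}})$: for each $Y\in\mathcal{F}(\Psi)$, Proposition \ref{proposicion Ext isomorfismo} gives $\Ext_{\Gamma}^{1}(F(Y),T)\cong\Ext_{\Lambda}^{1}(Y,D(\Lambda_{\Lambda}))=0$, the last equality holding because $D(\Lambda_{\Lambda})$ is injective, and every object of $\mathcal{F}(\leftidx{_{\Gamma}}{\overline{\Delta}})$ is isomorphic to some such $F(Y)$. Thus $T\in\mathcal{F}(\leftidx{_{\Gamma}}{\overline{\Delta}})\cap\mathcal{F}(\leftidx{_{\Gamma}}{\overline{\Delta}})^{\bot_{1}}$.

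Next I would invoke the theory of standardly stratified algebras for $(\Gamma^{op},\leq^{op})$ (Theorem \ref{Teo. equivalencia}(d)). By \cite{AHLU}, $\Gamma^{op}$ has a characteristic tilting module $\widetilde{T}$, which is a (generalised) tilting module, has $\rk\,K_0(\Gamma^{op})=t$ pairwise non-isomorphic indecomposable summands, and satisfies $\add\,\widetilde{T}=\mathcal{F}(\leftidx{_{\Gamma^{op}}}{\overline{\nabla}})\cap{}^{\bot_{1}}\mathcal{F}(\leftidx{_{\Gamma^{op}}}{\overline{\nabla}})$ as well as $\widetilde{T}^{\perp}=\mathcal{F}(\leftidx{_{\Gamma^{op}}}{\overline{\nabla}})$. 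Applying the duality $D_{\Gamma}$, which carries $\mathcal{F}(\leftidx{_{\Gamma^{op}}}{\overline{\nabla}})$ onto $\mathcal{F}(\leftidx{_{\Gamma}}{\overline{\Delta}})$ (as recalled in the proof of Theorem \ref{Teo. equivalencia}(f)), one obtains a $\Gamma$-module $C=D_{\Gamma}(\widetilde{T})$ which is a cotilting module, has $t$ pairwise non-isomorphic indecomposable summands, and satisfies ${}^{\perp}C=\mathcal{F}(\leftidx{_{\Gamma}}{\overline{\Delta}})$ and $\add\,C=\mathcal{F}(\leftidx{_{\Gamma}}{\overline{\Delta}})\cap\mathcal{F}(\leftidx{_{\Gamma}}{\overline{\Delta}})^{\bot_{1}}$.

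Finally I would compare $T$ and $C$. By the first paragraph $T\in\mathcal{F}(\leftidx{_{\Gamma}}{\overline{\Delta}})\cap\mathcal{F}(\leftidx{_{\Gamma}}{\overline{\Delta}})^{\bot_{1}}=\add\,C$, hence $\add\,T\subseteq\add\,C$; since $T$ and $C$ have the same finite number $t$ of pairwise non-isomorphic indecomposable summands, this forces $\add\,T=\add\,C$. Being additively equivalent to the cotilting module $C$, the module $T$ is itself a cotilting module, which proves (a); and from $\add\,T=\add\,C$ one gets ${}^{\perp}T={}^{\perp}C=\mathcal{F}(\leftidx{_{\Gamma}}{\overline{\Delta}})$ together with $\mathcal{F}(\leftidx{_{\Gamma}}{\overline{\Delta}})\cap\mathcal{F}(\leftidx{_{\Gamma}}{\overline{\Delta}})^{\bot_{1}}=\add\,C=\add\,T$, which proves (b). The step I expect to require the most care is the passage to the standardly stratified side: one must match invariants correctly across the duality $D_{\Gamma}$ (the proper standard modules of $\Gamma$ for $\leq^{op}$ against the proper costandard modules of $\Gamma^{op}$ for $\leq^{op}$) and be precise about which properties of the characteristic tilting module of a standardly stratified algebra are being used. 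The hypothesis $t=\rk\,K_0(\Lambda)$ enters exactly here: it is what upgrades $\add\,T\subseteq\add\,C$ to an equality, without which $T$ would in general only be a partial cotilting module.
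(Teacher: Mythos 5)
Your proposal is correct and follows essentially the same route as the paper's proof: use Proposition \ref{proposicion Ext isomorfismo} to show that $T$ is Ext-injective in $\mathcal{F}(\leftidx{_{\Gamma}}{\overline{\Delta}})$, invoke the AHLU results (dualized) for the standardly stratified algebra $(\Gamma^{op},\leq^{op})$ to obtain the characteristic cotilting $\Gamma$-module with the stated properties, and then upgrade the inclusion $\add\,T\subseteq\add\,C$ to an equality by comparing the number $t$ of pairwise non-isomorphic indecomposable summands. The paper is slightly terser (it cites ``the duals of'' the AHLU results directly, where you spell out the duality step), but the argument is the same.
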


\begin{proof} Let $D(\Lambda_{\Lambda})\in \mathcal{F}(\Psi)$ and
$t=\rk\,K_0(\Lambda).$ Since
$\Ext_\Lambda^{1}(-,D(\Lambda_\Lambda))=0,$ by Proposition
\ref{proposicion Ext isomorfismo}, it follows that $\text{\rm
Ext}_\Gamma^{1}(F(X),F(D(\Lambda_\Lambda)))=0$ for any
$X\in\F(\Psi).$ Hence
$T=F(D(\Lambda_\Lambda))\in{\mathcal{F}(\leftidx{_{\Gamma}}{\overline{\Delta}})}^{\bot_1}$
and so $\add\,(T)\subseteq
\mathcal{F}(\leftidx{_{\Gamma}}{\overline{\Delta}})\cap
{\mathcal{F}(\leftidx{_{\Gamma}}{\overline{\Delta}})}^{\bot_1}$. In
addition, from the fact that $(\Gamma^{op},\leq^{op})$ is a
standardly stratified algebra (see Theorem \ref{Teo. equivalencia}),  the duals of
\cite[Theorem 2.1, Proposition 2.2 (i)]{AHLU} show that there is a
basic cotilting $\Gamma$-module $T'$ such that
$\mathcal{F}(\leftidx{_{\Gamma}}{\overline{\Delta}})=\leftidx{^{\bot}}{T'}$
and $\mathcal{F}(\leftidx{_{\Gamma}}{\overline{\Delta}})\cap
{\mathcal{F}(\leftidx{_{\Gamma}}{\overline{\Delta}})}^{\bot_1}=\add\,(T').$
Finally, since $T'$ and $T$ have the same number of indecomposable
direct summands and $\add\,(T)\subseteq\add\,(T')$, we have
$T'\simeq T$ and the proof is complete.
\end{proof}

We know from Theorem \ref{Teo. equivalencia} (e) that the Ext-projective modules in $\mathcal{F}(\Psi)$ coincide
 with $\text{add}(Q)$. The next proposition describes the Ext-injectives in $\mathcal{F}(\Psi)$.

\begin{proposition}\label{prop.add del tilting de gama opuesta}
Let $(\Psi,\Q,\leq)$ be a proper costratifying system of size $t$ in
$\modu\,(\Lambda)$, $\Gamma=\End_\Lambda(Q)^{op}$,
$F=\Hom_\Lambda(Q,-):\modu\,(\Lambda) \rightarrow \modu\,(\Gamma)$ and $G=Q\otimes_\Gamma
-:\modu\,(\Gamma)\rightarrow\modu\,(\Lambda)$. If
${}_{\Gamma^{op}}T$ is the characteristic tilting module associated
to the standardly stratified algebra $(\Gamma^{op},\leq^{op}),$ then
$$\mathcal{F}(\Psi)\cap \mathcal{F}(\Psi)^{\bot_1}=\add\,(GD(\leftidx{_{\Gamma^{op}}}{T})).$$
\end{proposition}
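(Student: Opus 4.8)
The plan is to transfer the corresponding statement about Ext-injectives in $\F(\leftidx{_{\Gamma}}{\overline{\Delta}})$ across the equivalence $F|_{\F(\Psi)}:\F(\Psi)\to\F(\leftidx{_{\Gamma}}{\overline{\Delta}})$ of Theorem \ref{Teo. equivalencia} (b), using the Ext-compatibility of Proposition \ref{proposicion Ext isomorfismo}. First I would observe that, by Proposition \ref{proposicion Ext isomorfismo}, for $X,Y\in\F(\Psi)$ the functor $F$ gives an isomorphism $\Ext_\Lambda^1(X,Y)\simeq\Ext_\Gamma^1(F(X),F(Y))$; since $F$ restricts to a dense functor $\F(\Psi)\to\F(\leftidx{_{\Gamma}}{\overline{\Delta}})$, this shows that $X\in\F(\Psi)\cap\F(\Psi)^{\bot_1}$ if and only if $F(X)\in\F(\leftidx{_{\Gamma}}{\overline{\Delta}})\cap\F(\leftidx{_{\Gamma}}{\overline{\Delta}})^{\bot_1}$. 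Equivalently, applying the quasi-inverse $G$, we get
$$\F(\Psi)\cap\F(\Psi)^{\bot_1}=\add\big(G(U)\big),$$
where $U$ is any $\Gamma$-module with $\add\,(U)=\F(\leftidx{_{\Gamma}}{\overline{\Delta}})\cap\F(\leftidx{_{\Gamma}}{\overline{\Delta}})^{\bot_1}$ (here I also use that $\add$ is preserved by the equivalence, together with the fact that $\F(\Psi)$, and hence the left-hand side, is closed under direct summands, already established).

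Next I would identify $U$ explicitly. Since $(\Gamma^{op},\leq^{op})$ is standardly stratified (Theorem \ref{Teo. equivalencia} (d)), applying the usual duality $D_\Gamma$ turns $\F(\leftidx{_{\Gamma}}{\overline{\Delta}})$ into $\F(\leftidx{_{\Gamma^{op}}}{\overline{\nabla}})$, and $\Ext^1$-vanishing is preserved, so
$$D_\Gamma\big(\F(\leftidx{_{\Gamma}}{\overline{\Delta}})\cap\F(\leftidx{_{\Gamma}}{\overline{\Delta}})^{\bot_1}\big)=\leftidx{^{\bot_1}}{\F(\leftidx{_{\Gamma^{op}}}{\overline{\nabla}})}\cap\F(\leftidx{_{\Gamma^{op}}}{\overline{\nabla}}).$$
By the theory of the characteristic tilting module over a standardly stratified algebra (the relevant statement is in \cite{AHLU}: the Ext-projective–injective objects of $\F(\leftidx{_{\Gamma^{op}}}{\overline{\nabla}})$, respectively the Ext-injectives of $\F(\leftidx{_{\Gamma^{op}}}{\Delta})$, form $\add$ of the characteristic tilting module), this intersection equals $\add\,(\leftidx{_{\Gamma^{op}}}{T})$. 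Dualizing back, $\add\,(U)=\add\,(D_\Gamma(\leftidx{_{\Gamma^{op}}}{T}))$, hence $G(U)$ and $G(D(\leftidx{_{\Gamma^{op}}}{T}))$ have the same additive closure, giving $\F(\Psi)\cap\F(\Psi)^{\bot_1}=\add\,(GD(\leftidx{_{\Gamma^{op}}}{T}))$ as claimed.

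The step I expect to require the most care is matching up precisely which subcategory of $\modu\,(\Gamma^{op})$ is cut out by the Ext-injectivity condition and checking it coincides with $\add$ of the characteristic tilting module of $(\Gamma^{op},\leq^{op})$: one must be careful about which side ($\Gamma$ vs.\ $\Gamma^{op}$), which order ($\leq$ vs.\ $\leq^{op}$), and which class ($\overline{\Delta}$, $\Delta$, $\overline{\nabla}$, $\nabla$) appears, since the duality $D_\Gamma$ swaps several of these simultaneously. Concretely, the identity $\F(\leftidx{_{\Gamma}}{\overline{\Delta}})\cap\F(\leftidx{_{\Gamma}}{\overline{\Delta}})^{\bot_1}=\add\,(D\,{}_{\Gamma^{op}}T)$ should be read off from the duals of the cited results of \cite{AHLU} (compare the use already made of \cite[Theorem 1.6 (iv)]{AHLU} in the proof of Theorem \ref{Teo. equivalencia} (f)); once that bookkeeping is pinned down, everything else is a formal consequence of the equivalence and of Proposition \ref{proposicion Ext isomorfismo}.
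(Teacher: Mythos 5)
Your proposal is correct and follows the same route as the paper: use Proposition \ref{proposicion Ext isomorfismo} to transfer the Ext-projective condition across the equivalence $F|_{\F(\Psi)}$, then apply $D_\Gamma$ and the cited results of \cite{AHLU} to identify the Ext-projectives of $\F(\leftidx{_{\Gamma^{op}}}{\overline{\nabla}})$ with $\add\,({}_{\Gamma^{op}}T)$, and carry the conclusion back via $G$. The only cosmetic difference is that you introduce the intermediate module $U$ explicitly, whereas the paper writes the chain of equalities directly; the underlying argument is identical.
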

\begin{proof} By Proposition \ref{proposicion Ext isomorfismo}, we know that
 $$X\in\F(\Psi)\cap \F(\Psi)^{\bot_1}\,\Leftrightarrow\,F(X)\in\F({}_{\Gamma}\overline{\Delta})
 \cap\F({}_{\Gamma}\overline{\Delta})^{\bot_1}.$$ On the other hand,
 by using \cite[Theorem 1.6 (iii), Proposition 2.2 (i)]{AHLU}, it
 follows that
 $$D(\F({}_{\Gamma}\overline{\Delta})
 \cap\F({}_{\Gamma}\overline{\Delta})^{\bot_1})=\mathcal{F}(\leftidx{_{\Gamma^{op}}}{\overline{\nabla}})\cap
 \leftidx{^{\bot_1}}{\mathcal{F}(\leftidx{_{\Gamma^{op}}}{\overline{\nabla}})}=\add\,({}_{\Gamma^{op}}T).$$
Thus, we have that $X\in\mathcal{F}(\Psi)\cap
\mathcal{F}(\Psi)^{\bot_1}$ if and only if $X\in\text{\rm
add}(GD(\leftidx{_{\Gamma^{op}}}{T})).$
\end{proof}

We recall that a class $\mathcal{X},$ of objects in
$\modu\,(\Lambda)$, is \emph{coresolving} if it is closed under
extensions, cokernels of monomorphisms and contains the injective
$\Lambda$-modules \cite{AR}. In what follows, we characterize the situation when a proper costratifying system is the canonical one.

\begin{theorem}
\label{Teo. Coresolving} Let $\Lambda$ be a basic artin algebra and $(\Psi,\Q,\leq)$ be a proper costratifying
system of size $t$ in $\modu\,(\Lambda).$ Let
$\Gamma=\End_\Lambda(Q)^{op}$ and ${}_{\Gamma^{op}}T$ be the
characteristic tilting module associated to the standardly stratified algebra
$(\Gamma^{op},\leq^{op})$. Then, the
following statements are equivalent.
\begin{itemize}
\item[(a)] $\mathcal{F}(\Psi)$ is coresolving.
\item[(b)] $\mathcal{F}(\Psi)\cap \mathcal{F}(\Psi)^{\bot_1}=\add\,(D(\Lambda_{\Lambda}))$.
\item[(c)] $D(\Lambda_{\Lambda})\in\F(\Psi)$ and $t=\rk\,K_0(\Lambda)$.
\item[(d)] $\Lambda\simeq\End({}_{\Gamma^{op}}Q)$ and ${}_{\Gamma^{op}}Q\simeq{}_{\Gamma^{op}}T.$
\item[(e)] $t=\rk\,K_0(\Lambda)$ and there is a choice of the representative set ${}_\Lambda P=\{{}_\Lambda P(i)\,:\, i
\in[1,t]\}$ of indecomposable projective $\Lambda$-modules
 such that ${}_\Lambda\!\overline{\nabla}(i)\simeq\Psi(i)$ for all $i\in[1,t]$ and $(\Lambda,\leq)$ is
 a standardly stratified algebra.
\item[(f)] $D(\Lambda_{\Lambda})\in\mathcal{F}(\Psi)$ and $Q$ is a generalized tilting $\Lambda$-module.
\end{itemize}
\end{theorem}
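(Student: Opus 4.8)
The strategy is to establish the six equivalences by a cycle of implications, exploiting the dictionary set up in Theorem \ref{Teo. equivalencia}: $F=\Hom_\Lambda(Q,-)$ restricts to an exact equivalence $\F(\Psi)\to\F({}_\Gamma\overline\Delta)$ with quasi-inverse $G$, $(\Gamma^{op},\leq^{op})$ is standardly stratified, and $\add(Q)=\F(\Psi)\cap{}^{\perp_1}\F(\Psi)$. The natural cycle is $(a)\Rightarrow(b)\Rightarrow(c)\Rightarrow(f)\Rightarrow(e)\Rightarrow(d)\Rightarrow(a)$, though some of these may be easier to prove directly as $(c)\Leftrightarrow$ several others. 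First I would show $(a)\Rightarrow(b)$: if $\F(\Psi)$ is coresolving it contains $D(\Lambda_\Lambda)$, and since every injective is Ext-injective in $\modu(\Lambda)$ it lies in $\F(\Psi)\cap\F(\Psi)^{\perp_1}$; conversely, using that the Ext-injectives in a coresolving category which is closed under summands are exactly the summands of $D(\Lambda_\Lambda)$ (a standard fact, applicable because $\F(\Psi)$ is closed under direct summands by the proposition preceding Theorem \ref{Teo. Coresolving}), one gets $\F(\Psi)\cap\F(\Psi)^{\perp_1}=\add(D(\Lambda_\Lambda))$.

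Next, $(b)\Rightarrow(c)$ and $(c)\Rightarrow(f)$: from Proposition \ref{prop.add del tilting de gama opuesta} we have $\F(\Psi)\cap\F(\Psi)^{\perp_1}=\add(GD({}_{\Gamma^{op}}T))$, so $(b)$ forces $D(\Lambda_\Lambda)\in\add(GD({}_{\Gamma^{op}}T))\subseteq\F(\Psi)$ (as $\F(\Psi)$ is closed under summands), and a count of indecomposable summands together with $\rk K_0(\Gamma)=t$ gives $t=\rk K_0(\Lambda)$. Then $(c)\Rightarrow(a)$ and $(c)\Rightarrow(f)$ both use Proposition \ref{prop. add de un cotilting}: under hypothesis $(c)$, $T=F(D(\Lambda_\Lambda))$ is a cotilting $\Gamma$-module with $\F({}_\Gamma\overline\Delta)={}^{\perp}T$; being the left perpendicular of a cotilting module, $\F({}_\Gamma\overline\Delta)$ is coresolving in $\modu(\Gamma)$, hence via the exact equivalence $F$ the category $\F(\Psi)$ is coresolving, giving $(a)$; and $Q$ being a generalized tilting $\Lambda$-module follows because $F|_{\add Q}:\add Q\to\proj\Gamma$ is an equivalence, so $*$-duality and the cotilting property of $T$ over $\Gamma$ translate (via $\Gamma=\End_\Lambda(Q)^{op}$ and the count $t=\rk K_0(\Lambda)$) into the three defining conditions of a generalized tilting module for $Q$, giving $(f)$.

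The remaining links $(c)\Leftrightarrow(d)\Leftrightarrow(e)$ go through the observation that when $t=\rk K_0(\Lambda)$ and $Q$ is tilting, $\Gamma^{op}=\End_\Lambda(Q)$ has the classical tilting reciprocity $\Lambda\simeq\End_{\Gamma^{op}}({}_{\Gamma^{op}}Q)$, and one must identify ${}_{\Gamma^{op}}Q$ with the characteristic tilting module ${}_{\Gamma^{op}}T$: since $\F({}_\Gamma\overline\Delta)\cap\F({}_\Gamma\overline\Delta)^{\perp_1}=\add(T)$ corresponds under $D$ to $\F({}_{\Gamma^{op}}\overline\nabla)\cap{}^{\perp_1}\F({}_{\Gamma^{op}}\overline\nabla)=\add({}_{\Gamma^{op}}T)$ by \cite{AHLU}, and $F(Q)=\Gamma_\Gamma$ so $DF(Q)=D(\Gamma_\Gamma)={}_{\Gamma^{op}}\Gamma^{op}$-injective-cogenerator data pins down ${}_{\Gamma^{op}}T$. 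For $(e)$, under $(d)$ one transports the canonical proper costratifying system of Example \ref{canonical prop ss} for $(\Gamma^{op},\leq^{op})$ back along the equivalence: ${}_{\Gamma}\overline\Delta=F(\Psi)$ corresponds to $D({}_{\Gamma^{op}}\overline\nabla)$, and applying $G$ (equivalently, using $\Lambda\simeq\End({}_{\Gamma^{op}}Q)$ with $Q=T$) shows $\Psi(i)\simeq{}_\Lambda\overline\nabla(i)$ for a suitable ordering of the indecomposable projectives, with $(\Lambda,\leq)$ standardly stratified because it is the Ringel dual of the standardly stratified $(\Gamma^{op},\leq^{op})$.

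I expect the main obstacle to be the careful bookkeeping in $(c)\Leftrightarrow(d)\Leftrightarrow(e)$: one must match up several dualities ($D_\Lambda$, $D_\Gamma$, the functors $F,G$, and $*$-duality on projectives) and verify that the orderings are compatible, in particular that the characteristic tilting module ${}_{\Gamma^{op}}T$ really is ${}_{\Gamma^{op}}Q$ rather than merely stably equivalent to it, and that the identification ${}_\Lambda\overline\nabla(i)\simeq\Psi(i)$ holds on the nose for the right choice of representative set ${}_\Lambda P$. The homological inputs (coresolving $=$ left-perp of cotilting, Ext-injectives are summands of $D(\Lambda_\Lambda)$ in a summand-closed coresolving category) are routine once the cotilting module $T$ is produced by Proposition \ref{prop. add de un cotilting}.
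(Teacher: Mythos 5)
There is a genuine gap in your implication $(c)\Rightarrow(a)$. You claim that once $T=F(D(\Lambda_\Lambda))$ is cotilting over $\Gamma$, the category $\F({}_\Gamma\overline{\Delta})={}^{\perp}T$ is coresolving in $\modu\,(\Gamma)$, and that the exact equivalence $F$ carries this back to $\F(\Psi)$. This fails on two counts. First, the left perpendicular ${}^{\perp}T$ of a cotilting module is \emph{resolving}, not coresolving: it contains the projectives and is closed under kernels of epimorphisms, and indeed Theorem \ref{Teo. equivalencia}~(f) records precisely that $\F({}_\Gamma\overline{\Delta})$ is resolving. Second, and more fundamentally, coresolving is a property of a subcategory relative to its ambient module category (it must contain the injectives of $\modu\,(\Lambda)$ and be closed under cokernels of monomorphisms computed in $\modu\,(\Lambda)$), and an exact equivalence $\F(\Psi)\simeq\F({}_\Gamma\overline{\Delta})$ between the two filtered subcategories does not transport such an ambient property from $\modu\,(\Gamma)$ to $\modu\,(\Lambda)$. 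The paper avoids this by routing $(c)\Rightarrow(b)\Rightarrow(d)\Rightarrow(e)\Rightarrow(a)$: statement $(e)$ identifies $\Psi$ with the proper costandard modules ${}_\Lambda\overline{\nabla}$ of a standardly stratified $(\Lambda,\leq)$, after which coresolvingness of $\F(\Psi)=\F({}_\Lambda\overline{\nabla})$ is quoted from \cite{AHLU} directly in $\modu\,(\Lambda)$.

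Your proposed shortcut $(c)\Rightarrow(f)$ has the same difficulty. The assertion that the cotilting property of $T$ over $\Gamma$, together with the equivalence $F|_{\add\,(Q)}:\add\,(Q)\to\proj\,(\Gamma)$, ``translates into the three defining conditions of a generalized tilting module for $Q$'' is not a translation one can carry out termwise; in particular one cannot read off $\Ext^i_\Lambda(Q,Q)=0$ for $i\geq 2$ or a finite $\add\,(Q)$-coresolution of $\Lambda$ this way. The paper instead applies Theorem \ref{Teo. equivalencia} to the canonical proper costratifying system of $(\Gamma^{op},\leq^{op})$, deduces $(e)$, and then obtains $(f)$ by matching $Q$ against the characteristic tilting module ${}_\Lambda T$ via the uniqueness of proper costratifying systems (Remark \ref{lema sistemas propios isomorfos}). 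You do correctly isolate Propositions \ref{prop. add de un cotilting} and \ref{prop.add del tilting de gama opuesta} as the homological inputs, and your $(a)\Rightarrow(b)$, $(b)\Rightarrow(c)$ and $(d)\Rightarrow(e)$ are essentially the paper's, but the two shortcuts above must be replaced by the detour through $(d)$ and $(e)$.
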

\begin{proof} Consider the quasi-inverse functors
$F:\F(\Psi)\to\mathcal{F}(\leftidx{_{\Gamma}}{\overline{\Delta}})$
and $G:\mathcal{F}(\leftidx{_{\Gamma}}{\overline{\Delta}})\to
\F(\Psi)$, given in Theorem \ref{Teo. equivalencia}. Then, from
\cite[Pag. 120]{CE}, we have $G=Q\bigotimes_\Gamma-\simeq
D\Hom_\Gamma(-,D(Q))$. \

The implication $(a)\Rightarrow (b)$ follows from the dual of Lemma \ref{X resolvente}.
\

$(b)\Rightarrow (d)$ Let $\mathcal{F}(\Psi)\cap
\mathcal{F}(\Psi)^{\bot_1}=\add\,(D(\Lambda_{\Lambda})).$
Then $D(\Lambda_{\Lambda})\simeq
GF(D(\Lambda_{\Lambda}))=G(\Hom_\Lambda(Q,D(\Lambda_{\Lambda})))
\simeq G(D(Q))$. In addition, by hypothesis and Proposition
\ref{prop.add del tilting de gama opuesta}, we have
$\add\,(D(\Lambda_{\Lambda}))=\add\,(GD(\leftidx{_{\Gamma^{op}}}{T}))$.
Then, since $\Lambda$ is basic, we get that $GD(\leftidx{_{\Gamma^{op}}}{T})\simeq
G(D(Q))$ and therefore
${}_{\Gamma^{op}}Q\simeq{}_{\Gamma^{op}}T.$ Now, we prove that
$\Lambda\simeq\End({}_{\Gamma^{op}}Q)$. Indeed, the isomorphisms
$D(\Lambda_{\Lambda})\simeq
G(D(Q))\simeq
D\Hom_\Gamma(D(Q),D(Q))\simeq
D\Hom_{\Gamma^{op}}(Q,Q),$ show that
$\Lambda\simeq\End({}_{\Gamma^{op}}Q).$
\

$(d)\Rightarrow (e)$ Let $\Lambda\simeq\End({}_{\Gamma^{op}}Q)$ and
${}_{\Gamma^{op}}Q\simeq{}_{\Gamma^{op}}T.$ In particular, since
${}_{\Gamma^{op}}T$ is basic, it follows that ${}_{\Gamma^{op}}Q$ is
so, and therefore $t=\rk\,K_0(\Lambda).$ On the other hand, by Example
\ref{canonical prop ss}, we know that
$(\leftidx{_{\Gamma^{op}}}{\overline{\nabla}},\{\leftidx{_{\Gamma^{op}}}{T}(i)\}_{i=1}^t,\leq^{op})$
is a proper costratifying system of size $t$ in
$\modu\,(\Gamma^{op}).$ Hence, applying Theorem \ref{Teo. equivalencia} to
this system, we get an exact equivalence
$\widetilde{F}=\Hom_{\Gamma^{op}}(T,-):\F(\leftidx{_{\Gamma^{op}}}{\overline{\nabla}})\to\F(\leftidx{_A}{\overline{\Delta}})$
such that
$\widetilde{F}(\leftidx{_{\Gamma^{op}}}{\overline{\nabla}}(i))\simeq\leftidx{_A}{\overline{\Delta}}(i)$
for all $i\in [1,t]$, with $A=\End(\leftidx{_{\Gamma^{op}}}{T})^{op}$. The same theorem implies that $(A^{op},\leq)$ is a
standardly stratified algebra and the $\leftidx{_A}{\overline{\Delta}}(i)$'s correspond to
the pair $({}_A P,\leq)$, where ${}_A P=\{{}_A P(i)=\widetilde{F}(T(i))\}_{i=1}^t$.
 \

Since we are assuming that ${}_{\Gamma^{op}}Q\simeq{}_{\Gamma^{op}}T$, we get that their endomorphism rings are isomorphic.
We will identify $\Lambda$ and $A^{op}$ through this isomorphism. Then ${}_{\Lambda}\!\overline{\nabla}=D(\leftidx{_A}{\overline{\Delta}})$,
 where the projective $A$-modules are $({}_A P(i))^*=\Hom_A({}_A P(i),A)$.
 \

 Finally, it remains to show that ${}_\Lambda\!\overline{\nabla}(i)\simeq\Psi(i)$ for all $i\in[1,t]$. Let $i\in[1,t].$ Since
$F(\Psi(i))\simeq \leftidx{_{\Gamma}}{\overline{\Delta}}(i)$, we
have \\
$\Psi(i)\simeq
GD({}_{\Gamma^{op}}\!\overline{\nabla}(i))\simeq
D\Hom_\Gamma(D({}_{\Gamma^{op}}\!\overline{\nabla}(i)),D(Q))
\simeq D\Hom_{\Gamma^{op}}(Q,
{}_{\Gamma^{op}}\!\overline{\nabla}(i))\simeq
D\Hom_{\Gamma^{op}}(T,
{}_{\Gamma^{op}}\!\overline{\nabla}(i))\simeq
D({}_A\overline{\Delta}(i))\simeq{}_\Lambda\!\overline{\nabla}(i).$
\

$(e)\Rightarrow (f)$ Assume that (e) holds. In
particular $\leftidx{_\Lambda}{\Lambda}\in
\mathcal{F}({}_\Lambda\Delta)$. Then, it follows from
\cite[2.2]{Dlab} (see also \cite{L}) that
$D(\Lambda_{\Lambda})\in\mathcal{F}({}_\Lambda\!\overline{\nabla})=\mathcal{F}(\Psi)$. If $\leftidx{_{\Lambda}}{T}=\oplus_{i=1}^t\,T(i)$ is the
characteristic tilting module associated to the standardly stratified algebra
$(\Lambda,\leq)$, we know that $({}_\Lambda\!\overline{\nabla},\{T(i)\}_{i=1}^t,\leq)$ is a proper costratifying system. From
${}_\Lambda\!\overline{\nabla}(i)\simeq\Psi(i)$, for all
$i\in[1,t]$, and the uniqueness of proper costratifying systems proven in Remark \ref{lema sistemas propios isomorfos}, it follows that ${}_\Lambda Q\simeq
{}_\Lambda T$. Hence ${}_\Lambda Q$ is a tilting module.
\

$(e)\Rightarrow (a)$ Since $(\Lambda,\leq)$ is a standardly stratified algebra, we know from
 \cite[Theorem 1.6 (ii)]{AHLU} that $\mathcal{F}({}_\Lambda\!\overline{\nabla})$ is coresolving. Furthermore,
$\F(\Psi)=\F({}_\Lambda\!\overline{\nabla})$ since ${}_\Lambda\!\overline{\nabla}(i)\simeq\Psi(i)$, para todo $i\in[1,t],$ and so (e) follows.
\

$(b)\Rightarrow (c)$ Let $\mathcal{F}(\Psi)\cap \mathcal{F}(\Psi)^{\bot_1}=\add\,(D(\Lambda_{\Lambda})).$ Then, by Proposition
 \ref{prop.add del tilting de gama opuesta}, we get that
$\add\,(D(\Lambda_{\Lambda}))=\add\,(GD(\leftidx{_{\Gamma^{op}}}{T}))$ and hence $t=\rk\,K_0(\Lambda).$
\

$(c)\Rightarrow (b)$ Let $D(\Lambda_{\Lambda})\in\F(\Psi)$ and $t=\rk\,K_0(\Lambda).$ Applying the functor $G$
to the second equality in Proposition \ref{prop. add de un cotilting} (b), we have the equalities
$\F(\Psi)\cap\F(\Psi)^{\perp_1}=$ $=G(\F({}_\Gamma\overline{\Delta})\cap\F({}_\Gamma\overline{\Delta})^{\perp_1})=
\add\,(GF(D(\Lambda_{\Lambda})))=\add\,(D(\Lambda_{\Lambda})).$
\

$(f)\Rightarrow (c)$ We have that $t=\text{\rm card}(\text{\rm ind}(\text{\rm add}\,(Q)))=\rk\,K_0(\Lambda)$, where the last
equality holds since ${}_\Lambda Q$ is tilting, and this completes our proof.
\end{proof}

\begin{remark}
\rm Let $(\Psi,\Q,\leq)$ be the proper costratifying system considered in Example \ref{ejemplo 1, Q distinto T}. In this case,
 $\Gamma^{op}=\End({}_\Lambda Q)$ is given by the quiver
$$\underset{1}{\circ}\overset{\mu}{\underset{\delta}{\rightleftarrows}}\underset{3}{\circ}\overset{\varepsilon}{\longrightarrow}
\underset{2}{\circ}$$
with the relations $\varepsilon\mu=0$ and $\mu\delta\mu=0$. By Theorem \ref{Teo. equivalencia} we know that
$(\Gamma^{op},\leq^{op})$ is a standardly stratified algebra. The characteristic tilting module is\\
 $\small\leftidx{_{\Gamma^{op}}}{T}=\small\begin{array}{c} 3 \\1 \\3 \\1\end{array}\oplus\small\begin{array}{c}3 \\2\end{array}\oplus 3$, and it
is not isomorphic to $\small\leftidx{_{\Gamma^{op}}}{Q}=\small\begin{array}{c}2 \\ 1 \end{array}\oplus\small\begin{tabular}{ccc}
                                                                                                  & 3 &   \\
                                                                                                1 &   & 2 \\
                                                                                                3 &   &   \\
                                                                                                1 &   &
                                                                                              \end{tabular}\oplus 2$.\\
On the contrary, in Example \ref{ejemplo 0, Q=T} (see also Example \ref{Observ. continua ejemplo 0}),
we have that $\small\leftidx{_{\Gamma^{op}}}{T}=\leftidx{_{\Gamma^{op}}}{Q}=\small\begin{array}{c}
                                                           3 \\
                                                           2
                                                         \end{array}\oplus 3\oplus\small\begin{array}{c}
                                                                      1 \\
                                                                      3
                                                                    \end{array}$, but
$\Lambda\not\simeq\End({}_{\Gamma^{op}}Q)$. Note that, in both
cases, we have that ${}_\Lambda\!\overline{\nabla}\neq\Psi$.
\end{remark}

{\bf Acknowledgements.} The first author was partially
supported by the Project PAPIIT-Universidad Nacional Aut\'onoma de
M\'exico IN100810-3, and by the joint research project Mexico
(CONACyT)-Argentina (MINCYT): ``Homology, stratifying systems and
representations of algebras". The second author is a researcher from CONICET, Argentina. The second and third authors acknowledge partial support from Universidad Nacional del Sur and from CONICET.

\small
\markright{}

\footnotesize

\vskip3mm \noindent Octavio Mendoza Hern\'andez\\
Instituto de Matem\'aticas\\ Universidad Nacional Aut\'onoma de M\'exico.\\
Circuito Exterior, Ciudad Universitaria\\
C.P. 04510, M\'exico, D.f., M\'exico.\\ {\tt omendoza@matem.unam.mx}

\vskip3mm \noindent Mar\'{\i}a In\'es Platzeck\\
Instituto de Matem\'atica Bah\'{\i}a Blanca,\\
Universidad Nacional del Sur.\\
8000 Bah\'{\i}a Blanca, ARGENTINA.\\
{\tt platzeck@uns.edu.ar}

\vskip3mm \noindent     Melina Vanina Verdecchia\\
Instituto de Matem\'atica Bah\'{\i}a Blanca,\\
Universidad Nacional del Sur.\\
8000 Bah\'{\i}a Blanca, ARGENTINA.\\
{\tt mverdec@uns.edu.ar}


\begin{thebibliography}{200}

\bibitem[A]{Auslander} M. Auslander. Representation dimension of artin algebras.\emph{ Brandeis Univ.}
(1973).
\bibitem[ADL]{ADL} I. Agoston, V. Dlab, E. Luk\'acs. Stratified algebras.
\emph{Math. Rep. Acad. Sci. Canada} 20 (1) (1998) 22-28.
\bibitem[AHLU]{AHLU} I. Agoston, D. Happel, E. Luk\'acs, L. Unger. Standardly stratified
algebras and tilting. \emph{Journal of Algebra} 226 (1)
(2000)144-160.
\bibitem[AR]{AR} M. Auslander, I. Reiten. Applications of contravariantly finite subcategories. \emph{Adv. Math.} 86 (1991).
111-152.
\bibitem[ARS]{ARS} M. Auslander, I. Reiten, S.O. Smal{\o}. Representation Theory of Artin Algebras. \emph{Cambridge Studies in Adv. Math.} 36 (1995).
2817-2835.
\bibitem[CE]{CE} H. Cartan, S. Eilenberg. Homological algebra. \emph{Princeton University Press, Princeton, N. J.} (1956).
\bibitem[CPS]{CPS} E. Cline, B.J. Parshall, L.L. Scott. Stratifying
endomorphism algebras. \emph{Memoirs of the AMS} 591, (1996).
\bibitem[D1]{Dlab} V. Dlab. Quasi-hereditary algebras revisited. \emph{An. St. Univ. Ovidius Constanta} 4
(1996) 43-54.
\bibitem[D2]{Dlab2} V. Dlab. Properly stratified algebras. \emph{C. R. Acad. Sci. Paris} 331 (I) (2000), 191-196.
\bibitem[DR]{DR} V. Dlab, C.M. Ringel. The Module Theoretical approach to Quasi-hereditary
Algebras. \emph{Repr. Theory and Related Topics, London Math. Soc.
LNS} 168 (1992) 200-224.
\bibitem[ES]{ES} K. Erdmann, C. S\'aenz. On Standardly Stratified
Algebras. \emph{Communications in Algebra} 31 (7) (2003) 3429-3446.
\bibitem[L]{L} P. Lakatos. On a Theorem of V. Dlab. \emph{Algebras and Representation Theory}, 3, (2000), 99-103.
\bibitem[MMS]{MMS} E. Marcos, O. Mendoza, C. S\'aenz. Stratifying systems via relative projective
modules. \emph{Communications in Algebra} 33 (2005) 1559-1573.
\bibitem[MSXi]{MSXi} O. Mendoza, C. S\'aenz, C. Xi. Homological Systems in module categories over pre-ordered sets.
\emph{Quart. J. Math.} 60 (2009), 75-103.
\bibitem[PP1]{Platzeck Pratti} M.I. Platzeck, N.I. Pratti. On a theorem of Auslander and
applications. \emph{Communications in Algebra} 28 (6) (2000)
2817-2835.
\bibitem[PP2]{Platzeck Pratti2} M.I. Platzeck, N.I. Pratti. Tilting modules and the subcategories $C^{M}_i$.
\emph{Communications in Algebra} 34 (9) (2006)
3255-3266.
\bibitem[PR]{Platzeck Reiten} M.I. Platzeck, I. Reiten. Modules of finite projective dimension for standardly stratified algebras.
\emph{Communications in Algebra} 29 (3) (2001) 973-986.
\bibitem[R]{R} C.M. Ringel. The category of modules with good filtrations over a quasi-hereditary algebra has almost split sequences.
\emph{Math. Z.} 208 (1991) 209-233.
\bibitem[W]{Webb} P. Webb. Stratifications and Mackey functors I: functors for a single group. \emph{Proc. Lon. Math. Soc.} 82 (2001) 299-336.
\bibitem[Xi]{Xi} C. C. Xi. Standardly stratified algebras and cellular algebras. \emph{Math. Proc.
Cambridge Philos. Soc.} 133 (1) (2002) 37-53.





\end{thebibliography}
\end{document}